\theoremstyle{plain}
 \newtheorem{theorem}{Theorem}[section]
 \newtheorem{proposition}{Proposition}[section]
 \newtheorem{lemma}{Lemma}[section]
 \newtheorem{corollary}{Corollary}[section]
\newtheorem*{propositionA1}{Proposition A.1}
\newtheorem*{propositionA2}{Proposition A.2}
\theoremstyle{definition}
 \newtheorem{example}{Example}[section]
 \newtheorem{definition}{Definition}[section]
\theoremstyle{remark}
 \newtheorem{remark}{Remark}[section] 
\newtheorem*{remarkA.1}{Remark A.1}
 \numberwithin{equation}{section}
\renewcommand{\leq}{\leqslant}
\renewcommand{\geq}{\geqslant}
\renewcommand{\setminus}{\smallsetminus}
\title[Multidimensional Tauberian Theorems]{Multidimensional Tauberian theorems for vector-valued distributions}
\subjclass[2010]{Primary 40E05, 41A27. Secondary  26A12, 40E10, 41A60, 42C40, 46F10, 46F12}
\keywords{Abelian and Tauberian theorems, vector-valued distributions, quasiasymptotics, slowly varying functions, Laplace transform, wavelet transforms, regularizing transforms, asymptotic behavior of generalized functions}
\author[S. Pilipovi\'{c}]{Stevan Pilipovi\'{c}}
\address{Department of Mathematics and Informatics\\
University of Novi Sad\\
Trg Dositeja Obradovi\'ca 4\\
21000 Novi Sad \\
Serbia }
\email{pilipovic@dmi.uns.ac.rs}
\author[J. Vindas]{Jasson Vindas}
\address{Department of Mathematics\\
Ghent University\\
Krijgslaan 281 Gebouw S22\\
B 9000 Gent\\
Belgium}
\email{jvindas@cage.Ugent.be}
\begin{document}

\vspace{18mm}
\setcounter{page}{1}
\thispagestyle{empty}

\begin{abstract}
We  prove several Tauberian theorems for regularizing transforms of vector-valued distributions. The regularizing transform of $f$ is given by the integral transform $M^{f}_{\varphi}(x,y)=(f\ast\varphi_{y})(x),$ $(x,y)\in\mathbb{R}^{n}\times\mathbb{R}_{+}$, with kernel $\varphi_{y}(t)=y^{-n}\varphi(t/y)$. We apply our results to the analysis of asymptotic stability for a class of Cauchy problems, Tauberian theorems for the Laplace transform, the comparison of quasiasymptotics in distribution spaces, and we give a necessary and sufficient condition for the existence of the trace of a distribution on  $\left\{x_0\right\}\times \mathbb R^m$. In addition, we present a new proof of Littlewood's Tauberian theorem.
\end{abstract}

\maketitle


\emph{Dedicated to the memory of Prof. V. S. Vladimirov and Prof. B. I. Zav'yalov.}

\section{Introduction}
\label{wnwi}
Tauberian theory is an important subject which has shown striking usefulness in diverse areas of mathematics such as number theory, harmonic analysis, probability theory, differential equations, and mathematical physics. The one dimensional theory was intensively developed already in the first half of the 20th century and the main representative results from that period were collected in Wiener's work \cite{wiener} and Hardy's monograph \cite{hardy}; a more recent account can be found in Korevaar's book \cite{korevaarbook}. In contrast, the multidimensional Tauberian theory remained dormant until the 1970's. The breakthrough came with the work of the Russian mathematicians Vladimirov, Drozhzhinov, and Zav'yalov \cite{drozhzhinov-z0,vladimirov1}, which led to the incorporation of generalized functions in the scopes of Tauberian theory. Their deep and extensive work resulted in a powerful Tauberian machinery for multidimensional Laplace transforms \cite{vladimirov-d-z1} and greatly contributed toward the foundation of the field of asymptotic analysis of generalized functions.

The present article goes in the direction of their influential work. We shall here significantly improve the Tauberian theorems of Drozhzhinov and Zav'yalov for quasiasymptotics of tempered distributions from \cite{drozhzhinov-z2,drozhzhinov-z3}. 
We point out that our new Tauberian theorems contain as particular instances Meyer's results on wavelet characterizations of weak scaling exponents \cite{meyer} and the Tauberian theorems from \cite{vindas-pilipovic-rakic} for wavelet transforms by the authors and Raki\'{c}.

We are interested in the following class of integral transforms. Fix $\varphi\in\mathcal{S}(\mathbb{R}^{n})$ and set
$\varphi_{y}(\cdot)=y^{-n}\varphi(\cdot/y)$. To a
tempered distribution $f$, we associate the \emph{regulari\-zing transform}, or standard average according to \cite{drozhzhinov-z2,drozhzhinov-z3},
given by the $C^{\infty}$-function
\begin{equation}
\label{wnwieq1}
M^{f}_{\varphi}(x,y):=(f\ast\varphi_{y})(x)\ , \ \ (x,y)\in\mathbb{R}^{n}\times\mathbb{R}_{+}.
\end{equation}
In their seminal work \cite{drozhzhinov-z2,drozhzhinov-z3}, Drozhzhinov and Zav'yalov showed that if one employs a suitable kernel $\varphi$, then the scaling asymptotic behavior $f(\lambda t)\sim c(\lambda) g(t)$, in the sense introduced by Zav'yalov \cite{zavialov} and explained in Subsection \ref{wnwnq} below, with respect to a regularly varying function $c(\lambda)$ can be characterized (up to possible polynomial corrections) in terms of Tauberian theorems involving the angular asymptotic behavior of (\ref{wnwieq1}) plus a Tauberian estimate of the form
\begin{equation}
\label{wnwieq3} \left|M^{f}_\varphi(\lambda x,\lambda y)\right|\leq y^{-k}O(c(\lambda)), \ \ \ \mbox{uniformly for } \left|x\right|^{2}+y^{2}=1.
\end{equation}

We will revisit the problem and obtain optimal results. Our main results are two Tauberian theorems, stated in Section \ref{wnwtt}. Our first task is the identification of the biggest class of kernels $\varphi$ for which these Tauberian type theorems hold. Drozhzhinov and Zav'yalov proved that the Tauberian theorems are valid if $\hat{\varphi}$, the Fourier transform of $\varphi$, satisfies a non-degenerateness requirement; specifically, if it has a Taylor polynomial at the origin that is non-degenerate, in the sense that such a Taylor polynomial does not identically vanish on any ray through the origin. We will identify the biggest class of kernels associated to this Tauberian problem by finding a more general condition of non-degenerateness. It turns out that the structure of the Taylor polynomials of $\hat{\varphi}$ does not play any role in our notion of non-degenerateness.

In Wiener Tauberian theory \cite{wiener} and its many extensions \cite{beurling, drozhzhinov-z1,pilipovic-stankovicWTD,p-s-v} the Tauberian kernels are those whose Fourier transforms do not vanish at any point. In our theory the Tauberian kernels will be those $\varphi$ such that $\hat{\varphi}$ does not identically vanish on any ray through the origin. This is precisely our notion of non-degenerateness, which fully answers the question we just raised above. We mention that the same class of test functions shows up in other contexts (e.g., \cite{hyton-lutz}). 

The second important achievement of our Tauberian theorems is the complete analysis of critical degrees, namely, when the regular variation index of $c(\lambda)$ is a non-negative integer. This analysis was not present in \cite{drozhzhinov-z2,drozhzhinov-z3} nor in Meyer's work \cite{meyer} on pointwise weak scaling exponents. In such a critical case the classes of associate homogeneous and homogeneously bounded functions \cite{vindas3,vindas4}, defined in Subsection \ref{wnwtt}, will appear as natural terms in the polynomial corrections. 

In order to increase the range of applicability of our results, we will consider distributions with values in Banach spaces. Furthermore, as explained in Section \ref{wnwe}, all our results are also valid for distributions with values in more general classes of locally convex spaces. 

The plan of this article is as follows. Section \ref{wnwn} explains the main spaces and asymptotic notions for distributions to be considered in the paper. We give an Abelian proposition in Section \ref{wnwa}. The main section of this paper is Section \ref{wnwtt}, where we state two Tauberian theorems and discuss some important corollaries; the proofs of the Tauberian theorems are postponed to Section \ref{proofs}. We extend in Section \ref{waE} the distribution wavelet analysis from \cite{holschneider} to distribution with values in Banach spaces; such a wavelet analysis is based on the Lizorkin spaces $\mathcal{S}_{0}(\mathbb{R}^{n})$ and $\mathcal{S}'_{0}(\mathbb{R}^{n})$. Many important arguments used in Section \ref{proofs} rely on the wavelet desingularization formula discussed in Section \ref{waE}. In Section \ref{wnwap} we give several applications of our Tauberian theorems. We discuss sufficient conditions for stabilization in time for Cauchy problems related to a class of
parabolic equations, we apply our results to the Laplace transform and give a new proof of Littlewood's Tauberian theorem, and we describe the relation between quasiasymptotics in $\mathcal{D}'$ and $\mathcal{S}'$. Section 8 indicates various useful extensions of our results from previous sections; as an application, we give a necessary and sufficient condition for a tempered distribution $f\in\mathcal{S}'(\mathbb R^n_t\times\mathbb R^m_\xi)$ to have trace at $t=t_0,$ i.e., for the existence of $ f(t_0,\xi)$ in $\mathcal {S}'(\mathbb R^m_{\xi}).$ Finally, the Appendix contains a suitable reformulation of the results from \cite{vindas4}, which will play an essential role in the proofs of our Tauberian theorems. 

\section{Preliminaries}
\label{wnwn}
In this section we collect some notions needed in this article. Let us start by fixing the notation.
The space $E$ denotes a fixed, but arbitrary, Banach space with norm $\left\|\:\cdot\:\right\|$. If $\mathbf{a}:I\to E$ and $T:I\to\mathbb{R}_{+}$, where $I=(0,A)$ (resp. $I=(A,\infty)$) we write $\mathbf{a}(\lambda)=o(T(\lambda))$ as $\lambda\to0^{+}$ (resp. $\lambda\to\infty$) if $\left\|\mathbf{a}(\lambda)\right\|=o(T(\lambda))$. We shall use a similar convention for the big $O$ Landau symbol. Let $\mathbf{v}\in E$, we write $\mathbf{a}(\lambda)\sim T(\lambda) \mathbf{v}$ if $\mathbf{a}(\lambda)=T(\lambda) \mathbf{v}+o(T(\lambda))$. We use the notation $\mathbb{H}^{n+1} = \mathbb{R}^{n} \times\mathbb{R_+}$ for the upper half-space and $\mathbb{S}^{n-1}$ for the unit sphere of $\mathbb{R}^{n}.$

\subsection{Spaces} \label{spaces} The Schwartz spaces $\mathcal{D}(\mathbb{R}^{n}),$ $\mathcal{S}(\mathbb{R}^{n})$, $\mathcal{D}'(\mathbb{R}^{n}),$ $\mathcal{S}'(\mathbb{R}^{n})$ are well known \cite{schwartz1}. We use constants in the Fourier transform as
$\hat{\varphi}(u)=\int_{\mathbb{R}^{n}}\varphi(t)e^{-iu \cdot t}\mathrm{d}t.$
 We will also work with the space
$\mathcal{S}_0(\mathbb{R}^{n})$ of highly time-frequency localized
functions over $\mathbb{R}^{n}$ 
\cite{holschneider}; it is defined as the closed subspace of $\mathcal{S}(\mathbb{R}^{n})$ consisting of  those functions for which all their moments vanish,
i.e., $$\eta\in \mathcal{S}_0(\mathbb{R}^{n})\; \mbox{ if and only
if }\;  \int_{\mathbb{R}^{n}}t^m\eta(t)\mathrm{d}t=0, \;\mbox{ for
all }\; m\in\mathbb{N}^{n}.$$ 
We provide $\mathcal{S}_0(\mathbb{R}^{n})$ with the relative
topology inhered from $\mathcal{S}(\mathbb{R}^{n})$. This
space is also known as the Lizorkin space of test functions. 

Let $\mathcal{A}(\Omega)$ be a topological vector space of test function over
an open subset $\Omega\subseteq\mathbb{R}^{n}$.
We denote by $\mathcal{A}'(\Omega,E)=L_{b}(\mathcal{A}(\Omega),E)$, the space of continuous
linear mappings from $\mathcal{A}(\Omega)$ into $E$ with the topology of uniform convergence over
bounded subsets of $\mathcal{A}(\Omega)$. We are mainly concerned with
the spaces $\mathcal{D}'(\mathbb{R}^{n},E)$, $\mathcal{S}'(\mathbb{R}^{n},E)$, and $\mathcal{S}'_{0}(\mathbb{R}^{n},E)$; see \cite{silva,schwartzv} for vector-valued
distributions. If $f$ is a scalar-valued generalized function and $\mathbf{v}\in E$, we denote by $f\mathbf{v}=\mathbf{v} f$ the $E$-valued
generalized function given
by $\left\langle f(t)\mathbf{v},\varphi(t)\right\rangle=\left\langle f,
\varphi\right\rangle\mathbf{v}$.

 \subsection{Quasiasymptotics}
\label{wnwnq} 
Recall a positive real-valued function, measurable on an interval $(0,A)$ (resp. $(A,\infty)$), is called \emph{slowly varying} at the origin (resp. at
infinity) \cite{bingham,korevaarbook,seneta} if
\begin{equation*}
\lim_{\lambda\rightarrow0^{+}}\frac{L(\lambda a)}{L(\lambda)}=1\ \ \
\left(\mbox{ resp. }
\lim_{\lambda\rightarrow\infty}\right), \ \ \ \mbox{for each }a>0.
\end{equation*}
Throughout the rest of the article, $L$ always stands for a slowly varying function at the origin (resp. infinity).

In the next definition $\mathcal{A}(\mathbb{R}^{n})$ is assumed to be a space of functions on which the dilations are continuous operators. We are primarily concerned with $\mathcal{A}=\mathcal{D},\mathcal{S},\mathcal{S}_{0}$.

\begin{definition}
\label{wnwd2} Let $\mathbf{f}\in\mathcal{A}'(\mathbb{R}^{n},E)$. We say that:
\begin{itemize}
\item [(i)] $\mathbf{f}$ is quasiasymptotically bounded of degree $\alpha\in\mathbb{R}$ at the origin (resp. at infinity) with respect to $L$ in $\mathcal{A}'(\mathbb{R}^{n},E)$ if for each test function $\phi\in\mathcal{A}(\mathbb{R}^{n})$ 
\begin{equation*}
\limsup_{\lambda\to0^{+}}\frac{1}{\lambda^{\alpha}L(\lambda)}\left\|\left\langle
\mathbf{f}\left(\lambda x\right),\phi(x)\right\rangle\right\|<\infty 
\ \ \ \left(\mbox{resp. } \limsup_{\lambda\to\infty}\: \right).
\end{equation*}
We write:
$\mathbf{f}\left(\lambda x\right)=O\left(\lambda^{\alpha}L(\lambda)\right)$ in $\mathcal{A}'(\mathbb{R}^{n},E)$ as $\lambda\to0^{+}$ (resp. $\lambda\to\infty$). 
\item [(ii)] $\mathbf{f}$ has quasiasymptotic behavior of degree $\alpha\in\mathbb{R}$ at the origin (resp. at infinity) with respect to $L$ in $\mathcal{A}'(\mathbb{R}^{n},E)$ if there exists $\mathbf{g}\in\mathcal{A}'(\mathbb{R}^{n},E)$ such that for each test function $\phi\in\mathcal{A}(\mathbb{R}^{n})$ the following limit holds, with respect to the norm of $E$, 
\begin{equation*}
\lim_{\lambda\to0^{+}}\frac{1}{\lambda^{\alpha}L(\lambda)}\left\langle
\mathbf{f}\left( \lambda x\right),\phi(x)\right\rangle=\left\langle \mathbf{g}(x),\phi(x)\right\rangle \in E \ \ \
\left(\mbox{resp. } \lim_{\lambda\to\infty}\:\right) .
\end{equation*}
We write:
\begin{equation}
\label{wnweq2.1}
\mathbf{f}\left(\lambda t\right)\sim\lambda^{\alpha}L(\lambda)\mathbf{g}(t)  \ \ \mbox{in}\ \mathcal{A}'(\mathbb{R}^{n},E)  \ \ \mbox{as}\  \lambda\to 0^{+} 
 \left(\mbox{resp. }\ \lambda\to\infty\:\right).
\end{equation}
\end{itemize}
\end{definition}

We shall also employ the following notation for denoting the quasiasymptotic behavior (\ref{wnweq2.1})
$$
\mathbf{f}\left(\lambda t\right)=\lambda^{\alpha}L(\lambda)\mathbf{g}(t)+o\left(\lambda^{\alpha}L(\lambda)\right) \ \ \ \mbox{in}\ \mathcal{A}'(\mathbb{R}^{n},E),
$$
as $\lambda$ tends to either $0^{+}$ or $\infty$, which has a certain advantage when considering quasiasymptotic expansions. It is easy to show \cite{p-s-v} that $\mathbf{g}$ in (\ref{wnweq2.1}) must be homogeneous with degree of homogeneity $\alpha$ as a generalized function in $\mathcal{A}'(\mathbb{R}^{n},E)$, i.e., $\mathbf{g}(a t)=a^{\alpha}\mathbf{g}(t)$, for all $a\in\mathbb{R}_{+}$. We refer to \cite{drozhzhinov-z4} for an excellent presentation of the theory of multidimensional homogeneous distributions. See the monographs \cite{estrada-kanwal2,p-s-v,vladimirov-d-z1} for extensive studies about asymptotic properties of distributions.

\section{Abelian results}
\label{wnwa}
In order to motivate our Tauberian theorems from the next section, we present here an Abelian result which is essentially due to Drozhzhinov and Zav'yalov \cite{drozhzhinov-z2,drozhzhinov-z3}. Let $\mathbf{f}\in\mathcal{S}'(\mathbb{R}^{n},E)$. As in the Introduction,
we set
\begin{equation*}
M^{\mathbf{f}}_{\varphi}(x,y):=(\mathbf{f}\ast\varphi_{y})(x)\in E, \ \ \ (x,y)\in\mathbb{H}^{n+1},
\end{equation*}
the \emph{regularizing transform} of $\mathbf{f}$ with respect to the test function $\varphi\in\mathcal{S}(\mathbb{R}^{n})$. The Tauberian counterpart of
the following proposition is the main subject of this paper. 

\begin{proposition}
\label{Tqp1} Let $\mathbf{f}\in\mathcal{S}'(\mathbb{R}^{n},E)$ and $\varphi\in\mathcal{S}(\mathbb{R}^{n})$.
\begin{itemize}
\item [(i)] Assume that $\mathbf{f}$ is quasiasymptotically bounded of degree $\alpha$ at the origin (resp. at infinity) with respect to $L$ in $\mathcal{S}'(\mathbb{R}^{n},E)$. Then, there exist $k,l\in\mathbb{N}$, $C>0$ and $\lambda_{0}>0$ such that for all $(x,y)\in\mathbb{H}^{n+1}$
\begin{equation}
\label{TqeqA1}
\left\|M_{\varphi}^\mathbf{f}(\lambda x,\lambda y)\right\|\leq C \lambda^{\alpha}L(\lambda) \left( \frac{1}{y}+y\right)^{k}\left(1+\left|x\right|\right)^{l} , \ \ \  \ \lambda\leq\lambda_{0}\ \ \ 
\left(\text{resp. }\lambda_{0}\leq\lambda\  \right).
\end{equation}
\item [(ii)] If $\mathbf{f}\in\mathcal{S}'(\mathbb{R}^{n},E)$ has quasiasymptotics
$\mathbf{f}\left(\lambda t\right)\sim\lambda^{\alpha}L(\lambda)\mathbf{g}(t)$ in $\mathcal{S}'(\mathbb{R}^{n},E)$ as  $\lambda\to0^{+}$
(resp.  $\lambda\to\infty$), then, for each fixed $(x,y)\in\mathbb{H}^{n+1}$,
\begin{equation}
\label{TqeqA2} \lim_{\lambda\to0^{+}} \frac{1}{\lambda^{\alpha}L\left(\lambda\right)}M_{\varphi}^{\mathbf{f}}(\lambda x,\lambda y)=M^{\mathbf{g}}_{\varphi}(x,y)
\text{ in } E  \ \ \
\left(\mbox{resp. } \lim_{\lambda\to\infty}\right).
\end{equation}
\end{itemize}

\end{proposition}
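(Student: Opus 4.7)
The proposition is an Abelian-type result that transfers quasiasymptotic information on $\mathbf{f}$ in $\mathcal{S}'(\mathbb{R}^n,E)$ to pointwise estimates and limits for the regularizing transform. The starting observation is an elementary change of variables: for every $(x,y)\in\mathbb{H}^{n+1}$ and every $\lambda>0$,
\begin{equation*}
M^{\mathbf{f}}_{\varphi}(\lambda x,\lambda y)=\langle \mathbf{f}(\lambda u),\varphi_{y}(x-u)\rangle.
\end{equation*}
Indeed, substituting $t=\lambda u$ in the convolution integral (formally on a smoothing and then justified on $\mathcal{S}'$) gives exactly this identity. Thus the statement is purely about what happens when one tests the rescaled distribution $\mathbf{f}(\lambda\,\cdot\,)/(\lambda^{\alpha}L(\lambda))$ against the specific test function $u\mapsto\varphi_{y}(x-u)\in\mathcal{S}(\mathbb{R}^n)$.

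For part (ii), the proof is then immediate: the quasiasymptotic hypothesis means exactly that $\mathbf{f}(\lambda u)/(\lambda^{\alpha}L(\lambda))\to \mathbf{g}(u)$ in $\mathcal{S}'(\mathbb{R}^n,E)$, so testing against $\varphi_{y}(x-\cdot)$ yields, in the norm of $E$,
\begin{equation*}
\frac{1}{\lambda^{\alpha}L(\lambda)}M^{\mathbf{f}}_{\varphi}(\lambda x,\lambda y)\longrightarrow \langle \mathbf{g}(u),\varphi_{y}(x-u)\rangle=M^{\mathbf{g}}_{\varphi}(x,y).
\end{equation*}

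For part (i), the strategy is to use the Banach--Steinhaus theorem. The hypothesis of quasiasymptotic boundedness is precisely that the family
\begin{equation*}
\Bigl\{\,\frac{\mathbf{f}(\lambda\,\cdot\,)}{\lambda^{\alpha}L(\lambda)}\,:\,\lambda\in(0,\lambda_{0})\Bigr\}\ \ (\text{resp. }\lambda\in(\lambda_{0},\infty))
\end{equation*}
is pointwise bounded in $\mathcal{S}'(\mathbb{R}^n,E)$; since $\mathcal{S}(\mathbb{R}^n)$ is a Fr\'echet space, the uniform boundedness principle upgrades this to equicontinuity. Hence there exist $C_{0}>0$ and $k,l\in\mathbb{N}$ such that, for every $\phi\in\mathcal{S}(\mathbb{R}^n)$ and every admissible $\lambda$,
\begin{equation*}
\bigl\|\langle \mathbf{f}(\lambda u),\phi(u)\rangle\bigr\|\le C_{0}\,\lambda^{\alpha}L(\lambda)\sup_{|\beta|\le k,\,u\in\mathbb{R}^n}(1+|u|)^{l}|\partial^{\beta}\phi(u)|.
\end{equation*}
Applying this to $\phi(u)=\varphi_{y}(x-u)=y^{-n}\varphi((x-u)/y)$ reduces the problem to a standard rescaling estimate: differentiation produces factors $y^{-n-|\beta|}$, and the growth factor is bounded via $(1+|u|)\le (1+|x|)(1+|x-u|)$ together with $(1+|x-u|)\le(1+y)(1+|(x-u)/y|)$. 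Combining these and absorbing constants into seminorms of $\varphi$ gives an estimate of the form $(1+|x|)^{l}(1+y)^{l}y^{-n-k}$, which one rewrites as $(1+|x|)^{l}(y+1/y)^{k'}$ with $k'=n+k+l$. This yields \eqref{TqeqA1}.

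The only delicate point, and essentially the only ``hard'' step, is the correct choice of seminorm exponents in the rescaling estimate for $\varphi_{y}(x-\cdot)$: one must capture both the behavior as $y\to 0^{+}$ (through the factors $y^{-n-|\beta|}$ coming from differentiation) and as $y\to\infty$ (through the polynomial growth factor $(1+|u|)^{l}$ that has to be transferred to the argument $(x-u)/y$). Once this bookkeeping is done, both statements follow mechanically from Banach--Steinhaus and the definition of quasiasymptotic behavior.
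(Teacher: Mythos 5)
Your proof is correct and follows essentially the same route as the paper: part (ii) is by definition after the change of variables $M^{\mathbf{f}}_{\varphi}(\lambda x,\lambda y)=\langle \mathbf{f}(\lambda u),\varphi_{y}(x-u)\rangle$, and part (i) is Banach--Steinhaus applied to the bounded family $\{\lambda^{-\alpha}L(\lambda)^{-1}\mathbf{f}(\lambda\cdot)\}$ followed by the rescaling estimate for $\varphi_{y}(x-\cdot)$. The paper merely packages the equicontinuity-plus-rescaling step as a separate lemma (Proposition \ref{wnwp2}) and then feeds $\mathfrak{B}$ into it, but the underlying argument is identical.
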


Part (ii) directly follows by definition. The estimate (\ref{TqeqA1}) from part (i) is obtained from the following proposition
by considering the bounded set $$
\mathfrak{B}=\left\{\frac{1}{\lambda^{\alpha}L(\lambda)}\:\mathbf{f}(\lambda\:
\cdot\:): 0<\lambda\leq \lambda_{0}\right\}\ \
\left(\mbox{resp. } \lambda_{0}\leq\lambda\right).$$

\begin{proposition}
\label{wnwp2} Let 
$\mathfrak{B}\subset\mathcal{S}'(\mathbb{R}^{n},E)$ be a bounded set. Then there exist $k,l$ and
$C>0$ such that
\begin{equation*}
\left\|M_{\varphi}^\mathbf{f}(x,y)\right\|\leq C \left( \frac{1}{y}+y\right)^{k}\left(1+\left|x\right|\right)^{l} , \ \ \ \text{for all } \mathbf{f}\in\mathfrak{B}.
\end{equation*}
\end{proposition}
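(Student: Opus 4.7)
The plan is to exploit the equicontinuity of $\mathfrak{B}$ as a family of continuous linear maps $\mathcal{S}(\mathbb{R}^n)\to E$. Since $\mathcal{S}(\mathbb{R}^n)$ is a Fr\'echet (in fact nuclear) space and $E$ is Banach, bounded subsets of $\mathcal{S}'(\mathbb{R}^n,E)=L_b(\mathcal{S}(\mathbb{R}^n),E)$ are equicontinuous by the Banach--Steinhaus theorem. Hence there exist $N\in\mathbb{N}$ and $C'>0$ such that
\begin{equation*}
\|\langle\mathbf{f},\phi\rangle\|_E\le C'\,\|\phi\|_N,\qquad \mathbf{f}\in\mathfrak{B},\ \phi\in\mathcal{S}(\mathbb{R}^n),
\end{equation*}
where $\|\phi\|_N:=\sup_{|\alpha|\le N,\,t\in\mathbb{R}^n}(1+|t|)^N|\partial^\alpha\phi(t)|$ is one of the standard seminorms on $\mathcal{S}(\mathbb{R}^n)$.

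Next I would rewrite the regularizing transform as a pairing with a specific test function: for $(x,y)\in\mathbb{H}^{n+1}$ set $\phi_{x,y}(t):=y^{-n}\varphi((x-t)/y)$, so that $M^{\mathbf{f}}_\varphi(x,y)=\langle\mathbf{f}(t),\phi_{x,y}(t)\rangle$ and consequently $\|M^{\mathbf{f}}_\varphi(x,y)\|_E\le C'\,\|\phi_{x,y}\|_N$. A direct differentiation gives $\partial_t^\alpha\phi_{x,y}(t)=(-1)^{|\alpha|}y^{-n-|\alpha|}(\partial^\alpha\varphi)((x-t)/y)$; substituting $u=(x-t)/y$ and using the elementary inequality $1+|t|\le(1+|x|)(1+y)(1+|u|)$, which follows from $|t|\le|x|+y|u|$, one obtains
\begin{equation*}
\|\phi_{x,y}\|_N\le (1+|x|)^N(1+y)^N\,\max(y^{-n},y^{-n-N})\cdot K_\varphi,
\end{equation*}
where $K_\varphi:=\sup_{|\alpha|\le N,\,u}(1+|u|)^N|(\partial^\alpha\varphi)(u)|<\infty$ because $\varphi\in\mathcal{S}(\mathbb{R}^n)$.

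Finally, using AM--GM one has $y+1/y\ge 2$ for $y>0$, whence $1+y\le 2(y+1/y)$ and $\max(1,y^{-N})\le(y+1/y)^N$, so that a prefactor of the form $(1+y)^{N}\max(y^{-n},y^{-n-N})$ is dominated by a constant times $(y+1/y)^{k}$ with $k=2N+n$. Absorbing all constants into $C$ yields the required estimate with $l=N$. I do not anticipate any genuine obstacle: the only subtle ingredient is the passage from boundedness to equicontinuity, which is automatic in the Banach-valued Fr\'echet setting, and the rest is a routine seminorm computation.
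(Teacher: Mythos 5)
Your proof is correct and follows essentially the same route as the paper's: invoke equicontinuity of $\mathfrak{B}$ (automatic since $\mathcal{S}(\mathbb{R}^{n})$ is barrelled), bound $M^{\mathbf{f}}_{\varphi}(x,y)$ by a Schwartz seminorm of the dilated-translated kernel, and absorb the resulting powers of $y$ and $1+|x|$ into $(y+1/y)^{k}(1+|x|)^{l}$. The only cosmetic difference is that the paper estimates $(1+|x|+y|u|)^{k_1}$ directly while you factor through $(1+|x|)(1+y)(1+|u|)$; both give the same exponents.
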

\begin{proof}
The set $\mathfrak{B}$ is equicontinuous, whence we obtain the existence of
$k_{1}\in\mathbb{N}$ and $C_{1}>0$ such that

$$
\left\|\left\langle \mathbf{f},\rho\right\rangle\right\|\leq C_{1} \sup_{t\in\mathbb{R}^{n}, \left|m\right|\leq k_{1}} \left(1+\left|t\right|\right)^{k_1}\left|\rho^{(m)}(t)\right| , \ \ \ \mbox{for all } \rho\in \mathcal{S}(\mathbb{R}^{n})\mbox{ and } \mathbf{f}\in\mathfrak{B}.
$$
Consequently,
\begin{align*}
\left\|M^{\mathbf{f}}_{\varphi}(x,y)\right\| & =\frac{1}{y^{n}}\left\|\left\langle \mathbf{f}(t),\varphi\left(\frac{x-t}{y}\right) \right\rangle\right\|
\\
&
\leq C_{1}\left(\frac{1}{y}+y\right)^{n+k_1} \sup_{u\in\mathbb{R}^{n}, \left|m\right|\leq k_1} \left(1+\left|x\right|+y\left|u\right|\right)^{k_1}\left|\varphi^{(m)}\left(u\right)\right|
\\
&
\leq
C\left(\frac{1}{y}+y\right)^{n+2k_{1}}(1+\left|x\right|)^{k_{1}}  , \ \ \mbox{ for all } \mathbf{f}\in\mathfrak{B},
\end{align*}
where $C=C_{1}\sup_{u\in\mathbb{R}^{n}, \left|m\right|\leq k_1}
\left(1+\left|u\right|\right)^{k_1}\left|\varphi^{(m)}\left(u\right)\right|.$
\end{proof}

\section{Tauberian theorems for quasiasymptotics -- Main results}
\label{wnwtt}
We now state our main results. Their proofs will be postponed to Section \ref{proofs}. We are interested in the ``converse'' to Proposition \ref{Tqp1}. Naturally, not all test functions will be appropriate for the analysis of this problem. Our Tauberian kernels are precisely those test functions occurring in the following definition.

\begin{definition}
\label{Tqd1}
We say that the test function $\varphi\in\mathcal{S}(\mathbb{R}^{n})$ is non-degenerate if for any $\omega\in\mathbb{S}^{n-1}$ 
the function of one variable $R_{\omega}(r)=\hat{\varphi}(r\omega)\in C^{\infty}[0,\infty)$ is not identically zero, that is,
$
\operatorname*{supp} R_{\omega}\neq\emptyset , \ \ \ \text{for each }\omega\in\mathbb{S}^{n-1} .
$
\end{definition}

 We also need to introduce a class of $E$-valued functions which is of great importance in the study of asymptotic properties of distributions \cite{p-s-v,vindas4}. They will appear in our further consideration. The terminology is from  \cite{vindas1,vindas3,vindas-pilipovic1} (see also de Haan theory in \cite{bingham}).

\begin{definition}
\label{Tqd2} Let  $\mathbf{c}:(0,A)\to E$ (resp. $(A,\infty)\to E$) be a continuous $E$-valued function. We say that:
\begin{enumerate}
\item [(i)] $\mathbf{c}$ is associate asymptotically homogeneous of degree 0 with respect to  $L$ if for some $\mathbf{v}\in E$, as $\lambda\to0^{+}$ (resp. $\lambda\to\infty$),
\begin{equation*}
\mathbf{c}(a\lambda)=\mathbf{c}(\lambda)+L(\lambda)\log a \:\mathbf{v}+o(L(\lambda)), \ \ \ \mbox{for each } a>0.
\end{equation*}

\item [(ii)] $\mathbf{c}$ is asymptotically homogeneously bounded of degree 0 with respect to  $L$ if,  as $\lambda\to0^{+}$ (resp. $\lambda\to\infty$),
$$
\mathbf{c}(a\lambda)=\mathbf{c}(\lambda)+O(L(\lambda)),  \ \ \ \mbox{for each } a>0.
$$
\end{enumerate}
\end{definition}
If $\mathbf{c}$ satisfies either condition (i) or (ii) of Definition \ref{Tqd2}, one can show \cite[Prop. 2.3]{vindas1} that $
\left\|\mathbf{c}(\lambda)\right\|=o(\lambda^{-\sigma})$ as $\lambda\to0^{+}$ (resp. $o(\lambda^{\sigma})$ as $\lambda\to\infty$), for any $\sigma>0$.

We begin with the Tauberian theorem for quasi-asymptotic boundedness.

\begin{theorem}
\label{Tqth1}
Let $\mathbf{f}\in\mathcal{S}'(\mathbb{R}^{n},E)$ and let $\varphi\in\mathcal{S}(\mathbb{R}^{n})$ be non-degenerate. The estimate 
\begin{equation}
\label{Tqeq1}
\limsup_{\lambda\rightarrow 0^{+}}\underset{(x,y)\in\mathbb{H}^{n+1}}{\sup_{\left|x\right|^2+y^2=1}}\frac{y^k}{\lambda^{\alpha}L(\lambda)}\left\|M_{\varphi}^{\mathbf{f}}\left(\lambda
x,\lambda y\right)\right\|<\infty \ \ \ \left(\mbox{resp. } \limsup_{\lambda\rightarrow\infty}\right)
\end{equation}
for some $k\in\mathbb{N}$, implies the existence of an $E$-valued polynomial $\mathbf{P}$ of degree less than $\alpha$ (resp. of the form $\mathbf{P}(t)=\sum_{\alpha<\left|m\right|\leq d}t^{m}\mathbf{w}_{m}$) such that:
\begin{itemize}
\item [(i)] If $\alpha\notin\mathbb{N}$,
$\mathbf{f}-\mathbf{P}$ is quasi-asymptotically bounded of degree $\alpha$ at the origin (resp. at infinity) with respect to $L$ in the space $\mathcal{S}'(\mathbb{R}^{n},E)$.
\item [(ii)] If $\alpha=p\in\mathbb{N}$, there exist asymptotically homogeneously bounded $E$-valued functions $\mathbf{c}_{m}$, $\left|m\right|=p$, of degree 0 with respect to $L$ such that $\mathbf{f}$ has the following asymptotic expansion, as $\lambda\to0^{+}$ (resp. $\lambda\to\infty$),
\begin{equation*}
\mathbf{f}\left(\lambda t\right)=\mathbf{P}(\lambda t)+\lambda^{p}\sum_{\left|m\right|=p}t^{m}\mathbf{c}_{m}(\lambda)+O\left(\lambda^{p}L(\lambda)\right) \ \ \ \mbox{in } \mathcal{S}'(\mathbb{R}^{n},E).
\end{equation*}
\end{itemize}
Moreover, denote by $P_{q}$ the homogeneous terms of the Taylor
polynomials of $\hat{\varphi}$ at the origin, that is,
\begin{equation}
\label{Tqeq2}
P_{q}(u)=\sum_{\left|m\right|=q}\frac{{\hat{\varphi}}^{(m)}(0)u^{m}}{m!},
\ \ \ q\in\mathbb{N}.
\end{equation}
Then, the $E$-valued polynomial $\mathbf{P}$ must satisfy
\begin{equation}
\label{Tqeq3}
P_{q}\left(\frac{\partial}{\partial t}\right)\mathbf{P}=\mathbf{0},\ \ \ \mbox{for all } q\in\mathbb{N},
\end{equation}
and, in case (ii), one can find $l$ such that
\begin{equation}
\label{Tqeqextra} P_{q}\left(\frac{\partial}{\partial t}\right)\mathbf{C}(t,\lambda)=O((1+|t|)^{l}L(\lambda)), \ \ \ \mbox{for all } q\in\mathbb{N},
\end{equation}
uniformly in $t\in\mathbb{R}^{n}$ as $\lambda\to0^{+}$ (resp. $\lambda\to\infty^{+}$), where $\mathbf{C}(t,\lambda)=\sum_{\left|m\right|=p}t^{m}\mathbf{c}_{m}(\lambda)$. 
\end{theorem}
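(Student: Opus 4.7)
The strategy is to transfer the Tauberian estimate (\ref{Tqeq1}) into quasi-asymptotic boundedness of $\mathbf{f}$ on the Lizorkin space $\mathcal{S}_0(\mathbb{R}^n)$, and then to lift this information back to $\mathcal{S}'(\mathbb{R}^n,E)$ modulo polynomial corrections via the Appendix. Because $\varphi$ is non-degenerate in the sense of Definition \ref{Tqd1}, the wavelet analysis of Section \ref{waE} should supply a reconstruction wavelet $\psi\in\mathcal{S}_0(\mathbb{R}^n)$ and a desingularization formula recovering every $\eta\in\mathcal{S}_0(\mathbb{R}^n)$ from $M^{\eta}_{\varphi}$. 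Pairing the reconstruction against $\mathbf{f}(\lambda\,\cdot\,)$ and rearranging, each $\eta\in\mathcal{S}_0(\mathbb{R}^n)$ produces a smooth kernel $\mathcal{K}_\eta$ on $\mathbb{H}^{n+1}$ with rapid decay as $|x|\to\infty$, $y\to 0^+$, and $y\to\infty$, such that
\begin{equation*}
\langle\mathbf{f}(\lambda t),\eta(t)\rangle=\int_{0}^{\infty}\!\!\int_{\mathbb{R}^n}M^{\mathbf{f}}_\varphi(\lambda x,\lambda y)\,\mathcal{K}_\eta(x,y)\,dx\,\frac{dy}{y}.
\end{equation*}

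The next step is to plug the hypothesis (\ref{Tqeq1}) into this representation. Since (\ref{Tqeq1}) only controls $M^{\mathbf{f}}_\varphi$ on the unit half-sphere, I would extend it to arbitrary $(x,y)\in\mathbb{H}^{n+1}$ by writing $(x,y)=\rho(\xi,\zeta)$ with $|\xi|^2+\zeta^2=1$ and applying (\ref{Tqeq1}) at scale $\lambda\rho$; Potter's bounds for $L$ absorb the ratio $L(\lambda\rho)/L(\lambda)$ into an arbitrarily small power of $\rho$, and for $\lambda\rho$ outside the Tauberian regime one falls back on the crude temperedness bound of Proposition \ref{wnwp2}. The rapid decay of $\mathcal{K}_\eta$ absorbs the resulting factors in $\rho$ and $1/\zeta$, delivering
\begin{equation*}
\|\langle\mathbf{f}(\lambda t),\eta(t)\rangle\|\le C_\eta\lambda^\alpha L(\lambda),\qquad \eta\in\mathcal{S}_0(\mathbb{R}^n),
\end{equation*}
uniformly in the appropriate range of $\lambda$, which is exactly quasi-asymptotic boundedness of $\mathbf{f}$ of degree $\alpha$ in $\mathcal{S}'_0(\mathbb{R}^n,E)$.

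Now I would invoke the reformulated structural results of the Appendix (Propositions A.1 and A.2) to lift this $\mathcal{S}'_0$-boundedness back to $\mathcal{S}'(\mathbb{R}^n,E)$: for $\alpha\notin\mathbb{N}$ the correction is an $E$-valued polynomial $\mathbf{P}$ of the prescribed form and $\mathbf{f}-\mathbf{P}$ is quasi-asymptotically bounded, while in the critical case $\alpha=p\in\mathbb{N}$ the correction takes the form required in (ii) with coefficients $\mathbf{c}_m(\lambda)$ that are asymptotically homogeneously bounded as in Definition \ref{Tqd2}(ii). To complete the theorem I need to verify the compatibility conditions (\ref{Tqeq3}) and (\ref{Tqeqextra}). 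For (\ref{Tqeq3}) I would compute $M^{\mathbf{P}}_\varphi$ directly: writing $\mathbf{P}(t)=\sum_m t^m\mathbf{w}_m$ and substituting $u=(x-t)/y$ yields
\begin{equation*}
M^{\mathbf{P}}_\varphi(x,y)=\sum_m \mathbf{w}_m\int_{\mathbb{R}^n}(x-yu)^m\varphi(u)\,du,
\end{equation*}
which is a polynomial in $(x,y)$ whose homogeneous components in $y$ are precisely the $P_q(\partial/\partial t)\mathbf{P}$. Since both $\mathbf{f}$ and $\mathbf{f}-\mathbf{P}$ satisfy (\ref{Tqeq1}), so does $M^{\mathbf{P}}_\varphi$; as the degrees appearing in $\mathbf{P}$ lie strictly on the wrong side of $\alpha$, comparison of the polynomial growth of $M^{\mathbf{P}}_\varphi(\lambda x,\lambda y)$ with the bound $\lambda^\alpha L(\lambda)/y^k$ forces every homogeneous component to vanish, yielding (\ref{Tqeq3}). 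The same bookkeeping applied to $\mathbf{C}(t,\lambda)$ produces (\ref{Tqeqextra}).

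I expect the principal difficulty to be the $\mathcal{S}'_0\to\mathcal{S}'$ lifting in the critical case $\alpha=p\in\mathbb{N}$, where the correction is no longer a polynomial in $t$ alone but the $\lambda$-dependent family $\sum_{|m|=p}t^m\mathbf{c}_m(\lambda)$, and where the logarithmic slack built into the slowly varying $L$ can create or destroy associate homogeneity. Keeping the polynomial structure of the correction and the asymptotic homogeneity of its coefficients in sync is the technical heart of the argument; this is why it is isolated in the Appendix, onto which the remaining bookkeeping driven by the desingularization formula and the Taylor expansion of $\hat{\varphi}$ can be grafted.
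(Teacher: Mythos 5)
Your proposal reproduces the paper's architecture faithfully in its first three steps: Lemma \ref{Tql1} (the Potter-bound/temperedness extension of (\ref{Tqeq1}) from the half-sphere to all of $\mathbb{H}^{n+1}$), Lemma \ref{Tql2} (the desingularization argument via the auxiliary wavelet $\psi=\check{\bar\varphi}\ast\psi_1$ and a reconstruction wavelet, giving boundedness in $\mathcal{S}'_0(\mathbb{R}^{n},E)$), and the Appendix (Proposition A.2 — note that only A.2 is invoked here, A.1 being reserved for Theorem \ref{Tqth2}). Your derivation of (\ref{Tqeq3}) by expanding $M^{\mathbf{P}}_\varphi(\lambda x,\lambda y)$ as a polynomial in $\lambda$ and comparing degrees against $\lambda^{\alpha}L(\lambda)$ also matches the paper's device of choosing $\alpha<\kappa<[\alpha]+1$ and absorbing $L$ and the $\mathbf{c}_m$ into $O(\lambda^{\kappa})$.

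There is, however, a genuine gap at the last step: (\ref{Tqeqextra}) cannot be obtained by ``the same bookkeeping'' as (\ref{Tqeq3}). The obstruction is that all the terms $\lambda^{p}t^{m}\mathbf{c}_{m}(\lambda)$ sit at the \emph{same} order $\lambda^{p}$ (up to slowly varying perturbations), so unlike the polynomial $\mathbf{P}$ they cannot be separated by matching distinct powers of $\lambda$. The paper handles this with a different mechanism: apply Proposition \ref{wnwp2} to the bounded family $(\lambda^{p}L(\lambda))^{-1}\bigl(\mathbf{f}(\lambda t)-\lambda^{p}\sum_{|m|=p}t^{m}\mathbf{c}_{m}(\lambda)-\mathbf{P}(\lambda t)\bigr)$, combine with Lemma \ref{Tql1} and (\ref{Tqeq3}) to obtain
\begin{equation*}
\left\|\sum_{q=0}^{p}y^{q}\,P_{q}\!\left(-i\frac{\partial}{\partial x}\right)\mathbf{C}(x,\lambda)\right\|\leq C\,L(\lambda)\,\frac{(1+|x|)^{l}}{y^{k}}\:,
\end{equation*}
a \emph{polynomial in $y$ of fixed degree $p$} whose norm is controlled uniformly in $0<y<1$; then evaluate at $p+1$ distinct values $0<y_{0}<y_{1}<\dots<y_{p}<1$ and invert the resulting Vandermonde matrix $\bigl(y_{j}^{q}\bigr)_{j,q}$ to extract each $P_{q}(\partial/\partial t)\mathbf{C}$ individually with a bound $O((1+|x|)^{l}L(\lambda))$. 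Without this Vandermonde separation in the $y$-variable (rather than the $\lambda$-variable), the individual equations (\ref{Tqeqextra}) do not follow from the composite estimate, and your proof of the critical case (ii) is incomplete at exactly the point you yourself identified as the ``technical heart.''
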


Theorem \ref{Tqth1} yields the ensuing important corollary.

\begin{corollary}
\label{Tqc1}
Assume additionally that $\int_{\mathbb{R}^{n}}\varphi(t)\:\mathrm{d}t\neq0$. Then, the estimate $(\ref{Tqeq1})$ is necessary and sufficient for $\mathbf{f}$ to be quasiasymptotically bounded at the origin (resp. at infinity) of degree $\alpha$ in $\mathcal{S}'(\mathbb{R}^{n})$.
\end{corollary}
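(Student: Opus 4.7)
The plan is to obtain Corollary \ref{Tqc1} as an essentially direct consequence of Theorem \ref{Tqth1}, once one notes that the hypothesis $\int_{\mathbb{R}^{n}}\varphi(t)\,\mathrm{d}t\neq 0$ translates into $\hat{\varphi}(0)\neq 0$; equivalently, the constant term $P_{0}(u)=\hat{\varphi}(0)$ in (\ref{Tqeq2}) is a nonzero scalar, so the differential operator $P_{0}(\partial/\partial t)$ is nothing but multiplication by that scalar. This single observation is what will allow us to absorb the polynomial correction $\mathbf{P}$ and, in the critical case, the homogeneously bounded correction $\mathbf{C}(t,\lambda)$ into the error term.

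For necessity I would start from Proposition \ref{Tqp1}(i). If $\mathbf{f}$ is quasiasymptotically bounded of degree $\alpha$ with respect to $L$, the estimate (\ref{TqeqA1}) holds on all of $\mathbb{H}^{n+1}$ for $\lambda$ small (resp.\ large). Restricting to the unit hemisphere $\{|x|^{2}+y^{2}=1,\,y>0\}$, one has $|x|\le 1$ and $0<y\le 1$, so $(1+|x|)^{l}\le 2^{l}$ and $y^{k}(1/y+y)^{k}=(1+y^{2})^{k}\le 2^{k}$. Multiplying (\ref{TqeqA1}) by $y^{k}/(\lambda^{\alpha}L(\lambda))$ then yields (\ref{Tqeq1}) with a uniform constant.

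For sufficiency I would feed the hypothesis (\ref{Tqeq1}) into Theorem \ref{Tqth1}. The condition (\ref{Tqeq3}) taken at $q=0$ reads $\hat{\varphi}(0)\mathbf{P}=\mathbf{0}$, hence $\mathbf{P}=\mathbf{0}$. In case (i), where $\alpha\notin\mathbb{N}$, the conclusion of Theorem \ref{Tqth1} then already asserts that $\mathbf{f}=\mathbf{f}-\mathbf{P}$ is quasiasymptotically bounded of degree $\alpha$. In case (ii), with $\alpha=p\in\mathbb{N}$, the same reasoning applied to (\ref{Tqeqextra}) at $q=0$ produces $\mathbf{C}(t,\lambda)=O\bigl((1+|t|)^{l}L(\lambda)\bigr)$ uniformly in $t$; pairing with any $\phi\in\mathcal{S}(\mathbb{R}^{n})$ then gives $\lambda^{p}\langle \mathbf{C}(\cdot,\lambda),\phi\rangle=O(\lambda^{p}L(\lambda))$, and combined with the $O(\lambda^{\alpha}L(\lambda))$ remainder in the expansion this proves that $\mathbf{f}$ is quasiasymptotically bounded of degree $\alpha$ in $\mathcal{S}'(\mathbb{R}^{n},E)$.

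There is no real obstacle beyond Theorem \ref{Tqth1} itself: all the analytic work is already encoded there, and the role of the extra hypothesis $\hat{\varphi}(0)\neq 0$ in this corollary is solely to ensure that the operator $P_{0}(\partial/\partial t)$ appearing in (\ref{Tqeq3}) and (\ref{Tqeqextra}) is a nonzero scalar, so that both the polynomial correction and the homogeneously bounded correction are forced to vanish up to the allowable error.
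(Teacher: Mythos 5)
Your proof is correct and follows essentially the same route as the paper: necessity is the Abelian Proposition \ref{Tqp1}(i) specialized to the unit hemisphere, and sufficiency is read off from (\ref{Tqeq3}) and (\ref{Tqeqextra}) at $q=0$, where $P_0\equiv\hat\varphi(0)\neq 0$ forces $\mathbf P=\mathbf 0$ and $\mathbf C(t,\lambda)=O((1+|t|)^l L(\lambda))$. You have simply supplied the one-line details (the restriction of (\ref{TqeqA1}) to $|x|^2+y^2=1$, and the pairing of the absorbed $\mathbf C$-term with a test function) that the paper leaves implicit.
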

\begin{proof}
The sufficiency follows at once from (\ref{Tqeq3}) and (\ref{Tqeqextra}), as $P_{0}(t)$ is a nonzero constant. That (\ref{Tqeq1}) is necessary is a consequence of the Abelian result (Proposition \ref{Tqp1}).
\end{proof}
We now consider the quasiasymptotic behavior of $E$-valued tempered distributions.

\begin{theorem}
\label{Tqth2}
Let $\mathbf{f}\in\mathcal{S}'(\mathbb{R}^{n},E)$ and let $\varphi\in\mathcal{S}(\mathbb{R}^{n})$ be non-degenerate.
Then, the existence of the limits
\begin{equation}
\label{Tqeq4}
\lim_{\lambda\to0^{+}}\frac{1}{\lambda^{\alpha}L(\lambda)}M_{\varphi}^{\mathbf{f}}(\lambda x,\lambda y)=\mathbf{M}_{x,y} \ \ \text{for } (x,y)\in\mathbb{H}^{n+1}\cap\mathbb{S}^{n} \ \ \left(\mbox{resp. } \lim_{\lambda\to\infty}\right)
\end{equation}
and the estimate $(\ref{Tqeq1})$, for some $k\in\mathbb{N}$, imply the existence of $\mathbf{g}\in\mathcal{S}'(\mathbb{R}^{n},E)$, which satisfies $M_{\varphi}^{\mathbf{g}}(x,y)=\mathbf{M}_{x,y}$, and an $E$-valued polynomial $\mathbf{P}$ of degree less than $\alpha$ (resp. of the form $\mathbf{P}(t)=\sum_{\alpha<\left|m\right|\leq d}t^{m}\mathbf{w}_{m}$) such that:
\begin{itemize}
\item [(i)] If $\alpha\notin\mathbb{N}$, $\mathbf{g}$ is homogeneous of degree $\alpha$ and, as $\lambda\to0^{+}$ (resp. $\lambda\to\infty$),
\begin{equation*}
\mathbf{f}\left(\lambda t\right)-\mathbf{P}(\lambda t)\sim\lambda^{\alpha}L(\lambda)\mathbf{g}(t) \ \ \ \mbox{in}\ \mathcal{S}'(\mathbb{R}^{n},E).
\end{equation*}
\item [(ii)]
If $\alpha=p\in\mathbb{N}$, $\mathbf{g}$ is associate
homogeneous of order 1 and degree $p$ (cf. \cite[p. 74]{estrada-kanwal2}, \cite{shelkovich}) satisfying
\begin{equation*}
\mathbf{g}(at)= a^{p}\mathbf{g}(t)+a^{p}\log a \sum _{\left|m\right|=p}t^{m}\mathbf{v}_{m} , \ \ \ \mbox{for each }a>0,
\end{equation*}
for some vectors $\mathbf{v}_{m}\in E$, $\left|m\right|=p$, and there are associate asymptotically homogeneous $E$-valued functions $\mathbf{c}_{m}$, $\left|m\right|=p$, of degree 0 with respect to $L$ such that, as $\lambda\to0^{+}$ (resp. $\lambda\to\infty$),
\begin{equation*}
\mathbf{c}_{m}(a \lambda)=\mathbf{c}(\lambda)+ L(\lambda)\log a \:\mathbf{v}_{m}+o(L(\lambda)), \ \ \ \mbox{for each }a>0,
\end{equation*}
and $\mathbf{f}$ has the following asymptotic expansion in $\mathcal{S}'(\mathbb{R}^{n},E)$
\begin{equation*}
\mathbf{f}\left(\lambda t\right)=\mathbf{P}(\lambda t)+\lambda^{p}L(\lambda)\mathbf{g}(t)+\lambda^{p}\sum_{\left|m\right|=p}t^{m}\mathbf{c}_{m}(\lambda)+o\left(\lambda^{p}L(\lambda)\right),
\end{equation*}
as $\lambda\to0^{+}$ (resp. $\lambda\to\infty$).
\end{itemize}
Furthermore, $\mathbf{P}$ satisfies the equations $(\ref{Tqeq3})$ and, in case (ii), we have, for some $l$, 
\begin{equation}
\label{Tqeq6} P_{q}\left(\frac{\partial}{\partial t}\right)\mathbf{C}(t,\lambda)=o((1+|t|)^{l}L(\lambda)), \ \ \ \mbox{for all } q\in\mathbb{N},
\end{equation}
uniformly in $t\in\mathbb{R}^{n}$ as $\lambda\to0^{+}$ (resp. $\lambda\to\infty^{+}$), where $\mathbf{C}(t,\lambda)=\sum_{\left|m\right|=p}t^{m}\mathbf{c}_{m}(\lambda)$. 

\end{theorem}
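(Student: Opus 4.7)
The plan is to deduce Theorem 4.2 from Theorem 4.1 by using the pointwise convergence hypothesis (4.4) to upgrade the $O$-estimates into genuine quasi-asymptotic behaviors and to identify the limit distribution $\mathbf{g}$.

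First, since (4.1) holds, Theorem 4.1 applies and supplies immediately the $E$-valued polynomial $\mathbf{P}$ of the required form satisfying (4.3), together with, in the critical case $\alpha=p\in\mathbb{N}$, asymptotically homogeneously bounded $E$-valued functions $\mathbf{c}_{m}$ of degree $0$ such that
\begin{equation*}
\mathbf{f}(\lambda t)=\mathbf{P}(\lambda t)+\lambda^{p}\sum_{|m|=p}t^{m}\mathbf{c}_{m}(\lambda)+O(\lambda^{p}L(\lambda))\quad\text{in }\mathcal{S}'(\mathbb{R}^{n},E).
\end{equation*}
In the non-critical case $\alpha\notin\mathbb{N}$ Theorem 4.1 gives directly that $\mathbf{f}-\mathbf{P}$ is quasi-asymptotically bounded of degree $\alpha$.

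Second, I would construct the candidate limit $\mathbf{g}$ via the wavelet desingularization formula developed in Section 6. Pick a reconstruction wavelet $\psi\in\mathcal{S}_{0}(\mathbb{R}^{n})$ associated with $\varphi$; the formula produces any element of $\mathcal{S}_{0}'(\mathbb{R}^{n},E)$ from its wavelet transform, which on the subspace $\mathcal{S}_{0}$ coincides with $M^{\mathbf{f}}_{\varphi}$ up to a simple rescaling. Applying this reconstruction to the pointwise limit $\mathbf{M}_{x,y}$ defines $\mathbf{g}\in\mathcal{S}_{0}'(\mathbb{R}^{n},E)$; the uniform Tauberian bound (4.1) guarantees dominated-convergence-type arguments can be pushed through the wavelet synthesis integral, and one then obtains
\begin{equation*}
\lim_{\lambda\to 0^{+}}\frac{1}{\lambda^{\alpha}L(\lambda)}\langle\mathbf{f}(\lambda t),\eta(t)\rangle=\langle\mathbf{g},\eta\rangle\qquad\text{for every }\eta\in\mathcal{S}_{0}(\mathbb{R}^{n}).
\end{equation*}
Rescaling $(x,y)\mapsto(ax,ay)$ in (4.4) and exploiting the dilation covariance of $M^{\mathbf{f}}_{\varphi}$ then show that $\mathbf{g}$ is homogeneous of degree $\alpha$ when $\alpha\notin\mathbb{N}$ and associate homogeneous of order $1$ and degree $p$ in the critical case, with the vectors $\mathbf{v}_{m}$ arising from the scaling defect. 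The extension of $\mathbf{g}$ from $\mathcal{S}_{0}'$ to a genuine element of $\mathcal{S}'(\mathbb{R}^{n},E)$ with $M^{\mathbf{g}}_{\varphi}=\mathbf{M}_{x,y}$ uses that (associate) homogeneous Lizorkin distributions extend modulo polynomials and that the non-degenerateness of $\varphi$ together with (4.3) picks out the correct extension.

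Third, in case (ii) I would upgrade the bounded functions $\mathbf{c}_{m}$ to associate asymptotically homogeneous ones. The idea is to evaluate the expansion from Theorem 4.1 at the dilation $a\lambda$ and subtract the evaluation at $\lambda$, tested against a suitable $\eta\in\mathcal{S}_{0}$ or convolved with $\varphi_{y}$; the hypothesis (4.4) forces the difference $\mathbf{c}_{m}(a\lambda)-\mathbf{c}_{m}(\lambda)$ to behave like $L(\lambda)\log a\,\mathbf{v}_{m}+o(L(\lambda))$, with the same vectors $\mathbf{v}_{m}$ governing the associate homogeneity of $\mathbf{g}$. This is precisely where the reformulated structural results of the Appendix (Propositions A.1 and A.2) on associate asymptotically homogeneous functions enter; they convert the pointwise convergence into the structural statement of Definition 4.2(i). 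The refined estimates (4.6) then follow by applying $P_{q}(\partial/\partial t)$ to both sides of the asymptotic expansion, using (4.3) for $\mathbf{P}$ and the newly obtained associate homogeneity of the $\mathbf{c}_{m}$.

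The main obstacle is the critical case $\alpha=p\in\mathbb{N}$: here one must simultaneously track the polynomial $\mathbf{P}$ (annihilated by every $P_{q}(\partial)$), the associate homogeneous piece $\lambda^{p}L(\lambda)\mathbf{g}(t)$ whose scaling defect produces the vectors $\mathbf{v}_{m}$, and the functions $\mathbf{c}_{m}(\lambda)$ that absorb the $L(\lambda)\log a$ indeterminacy in both $\mathbf{g}$ and the dilations of $\mathbf{f}$. Matching these three ingredients consistently, and ensuring that the remainder is truly $o(\lambda^{p}L(\lambda))$ rather than merely $O$, requires a careful bookkeeping of the logarithmic corrections and is precisely the place where the Appendix material is indispensable.
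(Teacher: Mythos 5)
Your proposal is in the right spirit and hits the two essential tools --- the wavelet desingularization of Section 6 to obtain convergence against Lizorkin test functions, and the structural Propositions A.1/A.2 to pass from $\mathcal{S}'_{0}$ to $\mathcal{S}'$ --- but the logical organization is off and hides one genuine error. The paper does not first apply Theorem~\ref{Tqth1} and then try to ``upgrade'' the bounded functions $\mathbf{c}_{m}$; that detour is unnecessary and, in fact, there is no canonical way to match the $\mathbf{c}_{m}$ coming from Theorem~\ref{Tqth1} with the ones Proposition~A.1 will produce. The clean route is the one your second step gestures at: Lemma~\ref{Tql2}(ii), proved via the desingularization formula~(\ref{wnwneq12}) and dominated convergence, gives directly that $\mathbf{f}$ \emph{has quasiasymptotic behavior} (not merely boundedness) in $\mathcal{S}'_{0}(\mathbb{R}^{n},E)$. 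One then feeds this into Proposition~A.1, which delivers simultaneously $\mathbf{g}$, $\mathbf{P}$, the vectors $\mathbf{v}_{m}$, and the associate asymptotically homogeneous $\mathbf{c}_{m}$, all consistently. Equations~(\ref{Tqeq3}) are imported from Theorem~\ref{Tqth1}, and~(\ref{Tqeq6}) is obtained by the same Vandermonde argument used for~(\ref{Tqeqextra}).

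The genuine slip is your claim that rescaling~(\ref{Tqeq4}) shows $\mathbf{g}$ is associate homogeneous ``with the vectors $\mathbf{v}_{m}$ arising from the scaling defect.'' The distribution you reconstruct from $\mathbf{M}_{x,y}$ lives in $\mathcal{S}'_{0}(\mathbb{R}^{n},E)$, and on $\mathcal{S}_{0}(\mathbb{R}^{n})$ the monomials $t^{m}$ with $|m|=p$ act as the zero functional; hence the $\mathcal{S}'_{0}$-limit is \emph{genuinely} homogeneous of degree $\alpha$, even when $\alpha=p\in\mathbb{N}$, and no scaling defect is visible at that level. The associate homogeneity (and the vectors $\mathbf{v}_{m}$) is a property of the particular \emph{extension} to $\mathcal{S}'(\mathbb{R}^{n},E)$, and it is Proposition~A.1 --- not a rescaling of~(\ref{Tqeq4}) --- that produces it, together with the matching log-correction in the $\mathbf{c}_{m}$. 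Relatedly, the assertion that non-degenerateness of $\varphi$ ``picks out the correct extension'' is too strong in the critical case: as Remark~\ref{Tqr1} points out, $M_{\varphi}^{\mathbf{g}}=\mathbf{M}_{x,y}$ only determines $\mathbf{g}$ modulo homogeneous polynomials of degree $p$ satisfying~(\ref{Tqeq3}).
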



\begin{corollary}
\label{Tqc2}
If $\varphi$ satisfies the additional requirement $\int_{\mathbb{R}^{n}}\varphi(t)\:\mathrm{d}t\neq0$, then $(\ref{Tqeq4})$ and $(\ref{Tqeq1})$ are necessary and sufficient for $\mathbf{f}$ to have quasiasymptotic behavior in $\mathcal{S}'(\mathbb{R}^{n},E)$, namely,
\begin{equation*}
 \mathbf{f}\left(\lambda t\right)\sim\lambda^{\alpha}L(\lambda)\mathbf{g}(t) \ \ \ \mbox{in}\ \mathcal{S}'(\mathbb{R}^{n},E) \ \ \mbox{as}\ \lambda\to0^{+} \ \ \ \left(\mbox{resp. }\lambda\to\infty\right).
\end{equation*}
In such a case, $\mathbf{g}$ is completely determined by $M_{\varphi}^{\mathbf{g}}(x,y)=\mathbf{M}_{x,y}$.
\end{corollary}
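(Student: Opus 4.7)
My plan is to obtain Corollary \ref{Tqc2} essentially as a direct postprocessing of Theorem \ref{Tqth2}, once the extra assumption $\hat\varphi(0)=\int_{\mathbb{R}^{n}}\varphi(t)\mathrm{d}t\neq 0$ is exploited. The necessity of both (\ref{Tqeq1}) and (\ref{Tqeq4}) is immediate from Proposition \ref{Tqp1}: part (i) (combined with the remark after its statement) yields the Tauberian estimate, while part (ii) yields the pointwise limit together with the identification $\mathbf{M}_{x,y}=M_{\varphi}^{\mathbf{g}}(x,y)$.

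For sufficiency, I would feed the two hypotheses into Theorem \ref{Tqth2}. That theorem already supplies, in the generic case, the expansion $\mathbf{f}(\lambda t)-\mathbf{P}(\lambda t)\sim\lambda^{\alpha}L(\lambda)\mathbf{g}(t)$ and, in the critical case $\alpha=p\in\mathbb{N}$, the expansion
\[
\mathbf{f}(\lambda t)=\mathbf{P}(\lambda t)+\lambda^{p}L(\lambda)\mathbf{g}(t)+\lambda^{p}\mathbf{C}(t,\lambda)+o\left(\lambda^{p}L(\lambda)\right),
\]
with $\mathbf{C}(t,\lambda)=\sum_{|m|=p}t^{m}\mathbf{c}_{m}(\lambda)$. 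The point of the extra assumption is that the zeroth Taylor polynomial of $\hat\varphi$ reduces to the nonzero constant $P_{0}=\hat\varphi(0)$, so that $P_{0}(\partial/\partial t)$ is multiplication by $\hat\varphi(0)$. Taking $q=0$ in (\ref{Tqeq3}) therefore forces $\mathbf{P}\equiv\mathbf{0}$.

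In the critical case, taking $q=0$ in (\ref{Tqeq6}) reads $\hat\varphi(0)\,\mathbf{C}(t,\lambda)=o((1+|t|)^{l}L(\lambda))$ uniformly in $t\in\mathbb{R}^{n}$. Since $\mathbf{C}(\cdot,\lambda)$ is a homogeneous polynomial of fixed degree $p$ with $E$-valued coefficients, comparing its values on, say, the unit ball gives $\mathbf{c}_{m}(\lambda)=o(L(\lambda))$ for every $|m|=p$. Plugging this back into the defining relation of associate asymptotic homogeneity, $\mathbf{c}_{m}(a\lambda)=\mathbf{c}_{m}(\lambda)+L(\lambda)\log a\,\mathbf{v}_{m}+o(L(\lambda))$, and using slow variation of $L$, we obtain $\log a\cdot\mathbf{v}_{m}=0$ for all $a>0$, hence $\mathbf{v}_{m}=\mathbf{0}$. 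Consequently $\mathbf{g}$ is genuinely homogeneous of degree $p$, and the expansion collapses to $\mathbf{f}(\lambda t)=\lambda^{p}L(\lambda)\mathbf{g}(t)+o(\lambda^{p}L(\lambda))$, which is the desired quasiasymptotic behavior; the non-critical case is handled by the same $\mathbf{P}\equiv\mathbf{0}$ step alone.

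For the uniqueness statement, suppose $\mathbf{g}_{1},\mathbf{g}_{2}\in\mathcal{S}'(\mathbb{R}^{n},E)$ are homogeneous of degree $\alpha$ and share the same regularizing transform. Then $\mathbf{h}=\mathbf{g}_{1}-\mathbf{g}_{2}$ satisfies $\hat{\mathbf{h}}(u)\hat\varphi(yu)=0$ for all $y>0$. Non-degenerateness of $\varphi$ (Definition \ref{Tqd1}) gives, for each $\omega\in\mathbb{S}^{n-1}$, some $r>0$ with $\hat\varphi(r\omega)\neq 0$, hence an open cone along every ray on which $\hat\varphi(yu)$ does not vanish, so $\operatorname{supp}\hat{\mathbf{h}}\subseteq\{0\}$ and $\mathbf{h}$ is an $E$-valued polynomial. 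A polynomial homogeneous of non-integer degree is zero, handling case (i); in case (ii), expanding $(\mathbf{h}\ast\varphi_{y})(x)$ in powers of $y$ isolates $\hat\varphi(0)\,\mathbf{h}(x)$ as the $y^{0}$-coefficient, which must vanish, forcing $\mathbf{h}=\mathbf{0}$ as well. The only mildly technical step I foresee is the coefficient extraction $\mathbf{c}_{m}(\lambda)=o(L(\lambda))$ from the $t$-uniform estimate in (\ref{Tqeq6}), but this is just the elementary fact that $L^{\infty}$-norm on a fixed neighborhood of $0$ dominates the coefficients of a polynomial of fixed total degree.
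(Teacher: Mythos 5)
Your proof is correct and follows the route the paper intends: the paper gives no explicit proof of Corollary \ref{Tqc2}, but the argument you supply is precisely the analogue of the proof given for Corollary \ref{Tqc1} --- apply Theorem \ref{Tqth2} and exploit $P_{0}=\hat\varphi(0)\neq 0$ via $(\ref{Tqeq3})$ and $(\ref{Tqeq6})$ to annihilate $\mathbf{P}$, force $\mathbf{c}_{m}(\lambda)=o(L(\lambda))$, and hence $\mathbf{v}_{m}=\mathbf{0}$, so the expansion collapses to a genuine quasiasymptotic. Your uniqueness argument (supp$\,\hat{\mathbf{h}}\subseteq\{0\}$ by non-degenerateness, then extraction of the $y^{0}$-coefficient to kill the residual homogeneous polynomial) is also sound and usefully fleshes out what Remark \ref{Tqr1} leaves implicit under the extra hypothesis $\hat\varphi(0)\neq 0$.
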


All the above results have a version for distributions $\mathbf{f}\in\mathcal{D}'(\mathbb{R}^{n},E)$.

\begin{corollary}
\label{Tqth3} In the case of asymptotic behavior at the origin, Theorem \ref{Tqth1}, Corollary \ref{Tqc1}, Theorem \ref{Tqth2}, and Corollary \ref{Tqc2} are valid if one replaces $\mathcal{S}'(\mathbb{R}^{n},E)$ and $\mathcal{S}(\mathbb{R}^{n})$ by $\mathcal{D}'(\mathbb{R}^{n},E)$ and $\mathcal{D}(\mathbb{R}^{n})$ everywhere in the statements. 
\end{corollary}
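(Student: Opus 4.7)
The plan is to reduce each of the four statements to its already-proved $\mathcal{S}'$ counterpart by a localization argument. Fix once and for all a cutoff $\chi\in\mathcal{D}(\mathbb{R}^{n})$ with $\chi\equiv 1$ on some open neighborhood $U$ of the origin, and set $\tilde{\mathbf{f}}=\chi\mathbf{f}$. Then $\tilde{\mathbf{f}}\in\mathcal{E}'(\mathbb{R}^{n},E)\subset\mathcal{S}'(\mathbb{R}^{n},E)$, and any non-degenerate $\varphi\in\mathcal{D}(\mathbb{R}^{n})$ is also a non-degenerate element of $\mathcal{S}(\mathbb{R}^{n})$, so $\tilde{\mathbf{f}}$ and $\varphi$ are admissible inputs for the $\mathcal{S}'$ versions of Theorem \ref{Tqth1}, Corollary \ref{Tqc1}, Theorem \ref{Tqth2}, and Corollary \ref{Tqc2}.

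The first step I would carry out is to check that the Tauberian hypotheses $(\ref{Tqeq1})$ and $(\ref{Tqeq4})$ for $\mathbf{f}$ are equivalent to the corresponding hypotheses for $\tilde{\mathbf{f}}$, once $\lambda$ is small. Choose $R>0$ with $\operatorname*{supp}\varphi\subset B(0,R)$. For $(x,y)\in\mathbb{H}^{n+1}\cap\mathbb{S}^{n}$ the support of $t\mapsto\varphi_{\lambda y}(\lambda x-t)$ is contained in $\overline{B}(\lambda x,\lambda y R)\subset\overline{B}(0,\lambda(1+R))$; this ball sits inside $U$ for all $\lambda$ smaller than some $\lambda_{0}$ independent of $(x,y)$. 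Consequently $M^{\mathbf{f}}_{\varphi}(\lambda x,\lambda y)=M^{\tilde{\mathbf{f}}}_{\varphi}(\lambda x,\lambda y)$ uniformly on the half unit sphere for $\lambda<\lambda_{0}$, and the estimate $(\ref{Tqeq1})$ and the limit $(\ref{Tqeq4})$ pass back and forth between $\mathbf{f}$ and $\tilde{\mathbf{f}}$ without modification.

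The second step is to apply the already-proved $\mathcal{S}'$ statement to $\tilde{\mathbf{f}}$ and $\varphi$. This directly provides the polynomial $\mathbf{P}$, the distribution $\mathbf{g}\in\mathcal{S}'(\mathbb{R}^{n},E)$ (when relevant) with $M^{\mathbf{g}}_{\varphi}(x,y)=\mathbf{M}_{x,y}$, and the auxiliary functions $\mathbf{c}_{m}$, together with the structural identities $(\ref{Tqeq3})$, $(\ref{Tqeqextra})$, $(\ref{Tqeq6})$. Those identities involve only $\mathbf{P}$, $\mathbf{c}_{m}$, $\mathbf{g}$, and $\hat{\varphi}$, so they are inherited verbatim by the $\mathcal{D}'$ version.

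The third and final step is to descend from $\tilde{\mathbf{f}}$ back to $\mathbf{f}$ in $\mathcal{D}'(\mathbb{R}^{n},E)$. Set $\mathbf{h}=\mathbf{f}-\tilde{\mathbf{f}}=(1-\chi)\mathbf{f}$; this distribution vanishes on $U$, so for any $\phi\in\mathcal{D}(\mathbb{R}^{n})$ with compact support $K$ one has $\lambda K\subset U$ for all sufficiently small $\lambda$, giving $\langle\mathbf{h}(\lambda x),\phi(x)\rangle=\lambda^{-n}\langle\mathbf{h}(t),\phi(t/\lambda)\rangle=0$. Thus $\mathbf{f}$ and $\tilde{\mathbf{f}}$ share identical quasiasymptotic boundedness and quasiasymptotic limits at the origin in $\mathcal{D}'(\mathbb{R}^{n},E)$, and since $\mathcal{D}(\mathbb{R}^{n})\hookrightarrow\mathcal{S}(\mathbb{R}^{n})$ is continuous, the $\mathcal{S}'$ conclusions for $\tilde{\mathbf{f}}$ restrict to $\mathcal{D}'$ conclusions, which then transfer to $\mathbf{f}$. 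I do not foresee a substantive obstacle; the only delicate point is to secure a choice of $\lambda_{0}$ uniform in $(x,y)\in\mathbb{H}^{n+1}\cap\mathbb{S}^{n}$ at the first step, which is immediate from the uniform inclusion $\overline{B}(\lambda x,\lambda y R)\subset\overline{B}(0,\lambda(1+R))$ exploited above.
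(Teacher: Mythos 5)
Your argument is correct and follows essentially the same route as the paper: the paper writes $\mathbf{f}=\mathbf{f}_1+\mathbf{f}_2$ with $\mathbf{f}_1$ compactly supported and $\mathbf{f}_1=\mathbf{f}$ near the origin, which is exactly your $\tilde{\mathbf{f}}=\chi\mathbf{f}$ up to notation, and then uses the same observations — coincidence of the regularizing transforms on a dilated neighborhood of $(0,0)$ for small $\lambda$, coincidence of the quasiasymptotics of $\mathbf{f}$ and the localization in $\mathcal{D}'$, and application of the $\mathcal{S}'$ theorems to the localized distribution. The only cosmetic difference is that the paper phrases the support condition in terms of the ball $B(0,2r)$ rather than tracking the constant $1+R$, but the content is identical.
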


\begin{proof}
Find $r>0$ such that $\operatorname*{supp} \varphi$ is contained in $B(0,r)$, the Euclidean ball of radius $r$ with center at the origin. Write $\mathbf{f}=\mathbf{f}_{1}+\mathbf{f}_{2}$, where $\mathbf{f}_{1}$ has support in $B(0,3r)$ and $\mathbf{f}=\mathbf{f}_{1}$ on $B(0,2r)$. Clearly, $\mathbf{f}$ and $\mathbf{f}_{1}$ have exactly the same quasiasymptotic properties at the origin in the space $\mathcal{D}'(\mathbb{R}^{n},E)$. On the other hand, since $\mathbf{f}_{2}=0$ on $B(0,2r)$, we have that $M^{\mathbf{f}_{1}}_{\varphi}(x,y)=M^{\mathbf{f}}_{\varphi}(x,y)$ on the region $|x|<r$ and $0<y<1$. The assertions are then obtained by applying the results in $\mathcal{S}'(\mathbb{R}^{n},E)$ to the tempered distribution $\mathbf{f}_{1}$.

\end{proof}
At this point it is worth pointing out that the use of non-degenerate test functions in Theorem \ref{Tqth1} and Theorem \ref{Tqth2} (and their corresponding versions for $\mathcal{D}'(\mathbb{R}^{n},E)$) is absolutely imperative. Clearly, if $\hat{\varphi}$ identically vanishes on a ray through the origin, then there are distributions $\mathbf{f}$ for which $M_{\varphi}^{\mathbf{f}}$ is identically zero and hence for those $\mathbf{f}$ the hypotheses (\ref{Tqeq4}) and (\ref{Tqeq1}) are satisfied for all $\alpha.$ However, among such distributions $\mathbf{f}$, it is easy to find explicit examples for which the conclusions of Theorem \ref{Tqth1} and Theorem \ref{Tqth2} do not hold for a given $\alpha$. 

We end this section with several remarks.

\begin{remark}\label{Tqr1} When $\alpha\notin\mathbb{N}$ in Theorem \ref{Tqth2}, the condition 
$M_{\varphi}^{\mathbf{g}}(x,y)=\mathbf{M}_{x,y}$ uniquely determines $\mathbf{g}$, in view of its homogeneity. 
On the other hand, if $\alpha=p\in\mathbb{N}$, the prescribed values of $M_{\varphi}^{\mathbf{g}}$, in general, can only determine
 $\mathbf{g}$ modulo polynomials which are homogeneous of degree $p$ and satisfy (\ref{Tqeq3}). 
\end{remark}

\begin{remark}
\label{Tqr2}
Observe that if  $\varphi\in\mathcal{S}_{0}(\mathbb{R}^{n})$ in Theorem \ref{Tqth1} and Theorem \ref{Tqth2}, then the converse results are also true: If $\mathbf{f}$ has the asymptotic property stated in (i) or (ii) of Theorem \ref{Tqth1} (resp. Theorem \ref{Tqth2}) for an arbitrary $E$-valued polynomial $\mathbf{P}$, then the regularizing transform $M^{\mathbf{f}}_{\varphi}$ must satisfy (\ref{Tqeq1}) (resp. (\ref{Tqeq1}) and (\ref{Tqeq4})). In fact, this follows at once from the moment vanishing properties of $\varphi$.  The same consideration holds for quasi-asymptotics at the origin if we employ kernels $\varphi$ that have all vanishing moments $\int_{\mathbb{R}^{n}}t^{m}\varphi(t)\:\mathrm{d}t=0$ up to order $|m|\leq \alpha$. Notice also that Theorem \ref{Tqth1} and Theorem \ref{Tqth2} include as particular instances all Tauberian results for the wavelet transform from \cite{vindas-pilipovic-rakic} and Meyer's wavelet characterization of pointwise weak scaling exponents of distributions from \cite{meyer}. We also mention that related results in terms of orthogonal wavelet expansions have been obtained in \cite{pilipovic-teofanov,saneva-vindas,walterw}, such results may be regarded as discretized versions of Theorem \ref{Tqth1} and Theorem \ref{Tqth2}.
\end{remark}

\begin{remark}
\label{Tqr3}
 As mentioned at the Introduction, our motivation in this article comes from the work of Vladimirov, Drozhzhinov and Zav'yalov \cite{drozhzhinov-z2,drozhzhinov-z3, vladimirov-d-z1,vladimirov-d-z2}. In Subsection \ref{LT} below, we shall deduce their so-called general multidimensional Tauberian theorem for the Laplace transform from our Tauberian theorems. Theorem \ref{Tqth1} and Theorem \ref{Tqth2} significantly extend the Tauberian theorems for quasiasymptotics obtained by Drozhzhinov and Zav'yalov in \cite{drozhzhinov-z2,drozhzhinov-z3}. Our notion of non-degenerateness (Definition \ref{Tqd1}) is much less restrictive than the one considered by them.
We say that a polynomial $P$ is non-degenerate (at the origin) if
for each $\omega\in\mathbb{S}^{n-1}$ one has that
$$
P(r\omega)\not\equiv 0, \ \ \ r\in\mathbb{R}_{+}.$$
In their Tauberian theory, Drozhzhinov and Zav'yalov considered the class of test functions $\varphi\in\mathcal{S}(\mathbb{R}^{n})$
for which there exists $N\in\mathbb{N}$ such that
$$T_{\hat{\varphi}}^{N}(u)= \sum_{\left|m\right|\leq N}\frac{\hat{\varphi}^{(m)}(0)u^{m}}{m!},$$ the Taylor polynomial of order $N$ 
at the origin, is non-degenerate. We call such test functions here \emph{strongly non-degenerate}. 
It should be noticed that this type of kernels are included in Definition \ref{Tqd1}; naturally, Definition \ref{Tqd1} gives rise to much 
more kernels. For instance, any non-degenerate $\varphi\in\mathcal{S}_{0}(\mathbb{R}^{n})$ obviously fails to be strongly non-degenerate.
An explicit example of a non-degenerate function $\psi\in\mathcal{S}_{0}(\mathbb{R}^{n})$ is given in the Fourier side by
$\hat{\psi}(u)=e^{-\left|u\right|-(1/\left|u\right|)}.$ Furthermore, if
$\varphi\in\mathcal{S}(\mathbb{R}^{n})$ satisfies
$$
\hat{\varphi}(u)=e^{-\left|u\right|-(1/\left|u\right|)}+u_{1}^{2},\ \ \ \mbox{for } \left|u\right|<1,$$
where $u=(u_{1},u_{2},\dots,u_{n})$, then
$\varphi\notin\mathcal{S}_{0}(\mathbb{R}^{n})$
is a non-degenerate in the sense of Definition \ref{Tqd1}, but the Taylor polynomials of $\hat{\varphi}$ vanish
on the axis $u_{1}=0$; it therefore fails to be strongly non-degenerate.
\end{remark}

\begin{remark}
\label{Tqr4} Theorems \ref{Tqth1} and \ref{Tqth2} may be restated in terms of an interesting class of spaces introduced by Drozhzhinov and Zavialov in \cite{drozhzhinov-z3}. Let $I$ be an ideal of the ring $\mathbb{C}[t_1,t_{2},\dots,t_{n}]$, the (scalar-valued) polynomials over $\mathbb{C}$ in $n$ variables. Define the space $\mathcal{S}_{I}(\mathbb{R}^{n})$ as the subset of $\mathcal{S}(\mathbb{R}^{n})$ consisting of those $\phi$ such that all Taylor polynomials of $\hat{\phi}$ at the origin belong to the ideal $I$. For example, we have $\mathcal{S}_{I}(\mathbb{R}^{n})=\mathcal{S}_{0}(\mathbb{R}^{n})$ if $I=\left\{0\right\}$ or $\mathcal{S}_{I}(\mathbb{R}^{n})=\mathcal{S}(\mathbb{R}^{n})$ if $I=\mathbb{C}[t_1,t_{2},\dots,t_{n}]$. Let $P_{0},\dots,P_{q},\dots$ be a system of homogeneous polynomials where each $P_{q}$ has degree $q$ (some of them may be identically 0). Consider the ideal $I=[P_{0},P_{2},\dots, P_{q},\dots]$, namely, the ideal generated by the $P_{q}$;  then, one can show \cite{drozhzhinov-z3} that $\mathcal{S}_{I}(\mathbb{R}^{n})$ is a closed subspace of $\mathcal{S}(\mathbb{R}^{n})$ and actually $\phi\in\mathcal{S}_{I}(\mathbb{R}^{n})$ if and only if $\int_{\mathbb{R}^{n}} Q(t)\phi(t)\mathrm{d}t =0$ for all polynomial $\mathcal{Q}$ that satisfies the differential equations $P_{q}(\partial/\partial t)Q=0$, $q=0,1\dots .$ When there is $d\in\mathbb{N}$ such that $P_{q}=0$ for $q>d$ one can relax the previous requirement \cite[Lem. A.5]{drozhzhinov-z3} by just asking it to hold for polynomials $\mathcal{Q}$ with degree at most $d$. One can rephrase Theorem \ref{Tqeq1} and Theorem \ref{Tqeq2} as follows. Let $\mathbf{f}\in\mathcal{S}'(\mathbb{R}^{n},E)$ and let $\varphi\in\mathcal{S}(\mathbb{R}^{n})$ be non-degenerate. Take $P_{q}$ as in (\ref{Tqeq2}). For quasiasymptiotics at the origin set $I=[P_{0},P_{2},\dots, P_{[\alpha]}]$ ($I=\mathbb{C}[t_1,t_{2},\dots,t_{n}]$ if $\alpha<0$) and for the case at infinity $I=[P_{p},P_{p+1},\dots]$ where $p$ is the least non-negative integer $\geq\alpha$ (thus, $I=[P_{0},P_{1},\dots]$ if $\alpha<0$). View $\mathbf{f}$ as an element of $ \mathcal{S}'_{I}(\mathbb{R}^{n},E):=L(\mathcal{S}_{I}(\mathbb{R}^{n}),E)$ via its restriction to $\mathcal{S}_{I}(\mathbb{R}^{n})$. Then,
\begin{enumerate}
\item [(a)] $\mathbf{f}$ is quasiasymptotically bounded of degree $\alpha$ with respect to $L$ in the space $\mathcal{S}'_{I}(\mathbb{R}^{n},E)$ if and only if the estimate (\ref{Tqeq1}) holds.
\item [(b)] $\mathbf{f}$ has quasiasymptotic behavior of degree $\alpha$ with respect to $L$ in $\mathcal{S}'_{I}(\mathbb{R}^{n},E)$ if and only if (\ref{Tqeq1}) and (\ref{Tqeq4}) are satisfied.
\end{enumerate}
These assertions follow from (\ref{Tqeq3}), (\ref{Tqeqextra}), and (\ref{Tqeq6}). These forms of the Tauberian theorems are complemented with the following observation. Given any arbitrary system of homogeneous polynomials $P_{0},P_{1},\dots, P_{q},\dots,$ where $P_{q}$ is homogeneous of degree $q$, there is always a non-degenerate test function $\varphi \in \mathcal{S}(\mathbb{R}^{n})$ such that (\ref{Tqeq2}) holds. To show this, we first use the fact that every Stieltjes moment problem has a solution in $\mathcal{S}(\mathbb{R}^{n})$ \cite{estrada1}. Find then $\varphi_{1}\in\mathcal{S}(\mathbb{R}^{n})$ such that the homogeneous terms of the Taylor polynomials at the origin of $\hat{\varphi}_{1}$ coincide with the $P_{q}$. Of course, $\varphi_{1}$ may be degenerate, but there is a constant $C>0$ such $\hat{\varphi}_{1}(u)+C\hat{\psi}(u)>0$ for $1/2<|u|<1$, where $\psi\in\mathcal{S}_{0}(\mathbb{R}^{n})$ is the test function considered in Remark \ref{Tqr3}. The function $\varphi=\varphi_{1}+\psi$ clearly satisfies the requirements.
\end{remark}

\section{Wavelet analysis on $\mathcal{S}'_{0}(\mathbb{R}^{n},E)$}
\label{waE}

In this section we extend the scalar distribution wavelet analysis given in \cite{holschneider} to $E$-valued generalized functions. Such results will play an important role in our proofs of Theorem \ref{Tqth1} and Theorem \ref{Tqth2}. 

By a wavelet we simply mean an element $\psi\in\mathcal{S}_{0}(\mathbb{R}^{n})$ (cf. Subsection \ref{spaces}), and the wavelet transform of $\mathbf{f}\in\mathcal{S}_{0}'(\mathbb{R}^{n},E)$ with respect to $\psi$ is defined as
\begin{equation*}
\mathcal{W}_{\psi}\mathbf{f}(x,y)=\left\langle \mathbf{f}(x+yt),\bar{\psi}(t)\right\rangle=M^{\mathbf{f}}_{\check{\bar{\psi}}} (x,y)\in E, \ \ \ (x,y)\in\mathbb{H}^{n+1},
\end{equation*}
where $\check{\phi}$ denotes reflection about the origin, i.e., $\check{\phi}(t)=\phi(-t)$. When acting on $\mathcal{S}_{0}(\mathbb{R}^{n})$, the wavelet $\mathcal{W}_{\psi}$ has as range a subspace of the space of highly
localized function over $\mathbb H^{n+1}$, denoted \cite{holschneider} as
$\mathcal{S}(\mathbb {H}^{n+1})$ and consisting of those $\Phi\in
C^{\infty}(\mathbb{H}^{n+1})$ for which
$$\sup_{(x,y)\in
\mathbb {H}^{n+1}}\,\left(y+\frac
{1}{y}\right)^{k_{1}}\left(1+\left|x\right|\right)^{k_{2}}\,\left|\frac{\partial^{l}}{\partial y^{l}}\frac{\partial^{m}}{\partial x^{m}}\Phi (x,y)\right|<\infty,$$
for all $k_{1},k_{2},l\in \mathbb{N}$ and $m\in\mathbb{N}^{n}$.
The canonical topology of this space is defined in the standard way \cite{holschneider}. We have that \cite{holschneider,p-r-v} $\mathcal{W}_{\psi}:\mathcal{S}_{0}(\mathbb{R}^{n})\mapsto\mathcal{S}(\mathbb{H}^{n+1})$ is a continuous linear map. We are interested in those wavelets for which $\mathcal{W}_{\psi}$ admits a left inverse. For wavelet-based reconstruction, we shall use the wavelet
synthesis operator \cite{holschneider}. Given $\Phi\in\mathcal{S}(\mathbb{H}^{n+1})$, we define the \textit{wavelet
synthesis operator} with respect to the wavelet $\psi$ as
\begin{equation}
\label{wnwneq6}
\mathcal{M}_{\psi}\Phi(t)=\int^{\infty}_{0}\int_{\mathbb{R}^{n}}\Phi(x,y)\frac{1}{y^{n}}\psi\left(\frac{t-x}{y}\right)\frac{\mathrm{d}x\mathrm{d}y}{y}\: ,
\ \ \ t \in \mathbb{R}^{n}.
\end{equation}
One can show that $\mathcal{M}_{\psi}:\mathcal{S}(\mathbb{H}^{n+1})\rightarrow
\mathcal{S}_0(\mathbb{R}^{n})$ is continuous \cite{holschneider,p-r-v}.

We shall say that the wavelet $\psi\in\mathcal{S}_{0}(\mathbb{R}^{n})$ admits a \textit{reconstruction wavelet} if there exists $\eta\in\mathcal{S}_{0}(\mathbb{R}^{n})$ such that
\begin{equation}
\label{wnwneq7}
c_{\psi,\eta}(\omega)=\int^{\infty}_{0}\overline{\hat{\psi}}(r\omega)\hat{\eta}(r\omega)\frac{\mathrm{d}r}{r}\: , \ \ \ \omega\in\mathbb{S}^{n-1} ,
\end{equation}
is independent of the direction $\omega$; in such a case we set $c_{\psi,\eta}:=c_{\psi,\eta}(\omega).$
The wavelet $\eta$ is
called a reconstruction wavelet for $\psi$.

If $\psi$ admits the reconstruction wavelet $\eta$, one has the reconstruction
formula \cite{holschneider} for the wavelet transform on $\mathcal{S}_{0}(\mathbb{R}^{n})$
\begin{equation}
\label{wnwneq8}
\mathrm{Id}_{\mathcal{S}_0(\mathbb{R}^{n})}=\frac{1}{c_{\psi,\eta}}\mathcal{M}_{\eta}\mathcal{W}_{\psi}.
\end{equation}

We now characterize those wavelets which have a reconstruction wavelet. Actually, the class of non-degenerate test functions from $\mathcal{S}_{0}(\mathbb{R}^{n})$ (cf. Definition \ref{Tqd1}) coincides with that of wavelets admitting reconstruction wavelets.

\begin{proposition}
\label{wnwp3}
Let $\psi\in\mathcal{S}_{0}(\mathbb{R}^{n})$. Then, $\psi$ admits a reconstruction wavelet if and only if it is non-degenerate.
\end{proposition}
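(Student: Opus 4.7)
My plan is to establish the two implications separately. For the necessity direction, if $\psi$ were degenerate — say $\hat\psi(r\omega_0)\equiv 0$ along some ray through the origin — then for every choice of $\eta\in\mathcal{S}_0(\mathbb{R}^n)$ the integrand defining $c_{\psi,\eta}(\omega_0)$ vanishes identically, so $c_{\psi,\eta}(\omega_0)=0$. Since the reconstruction formula $(\ref{wnwneq8})$ implicitly requires $c_{\psi,\eta}$ to be a \emph{nonzero} constant independent of $\omega$, $\psi$ must be non-degenerate.

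For the sufficiency direction, I plan to build, from a non-degenerate $\psi$, an $\eta\in\mathcal{S}_0(\mathbb{R}^n)$ with $c_{\psi,\eta}(\omega)\equiv 1$, by realizing $\hat\eta$ as a compactly supported smooth function supported in an annulus bounded away from the origin. First I would combine non-degeneracy with continuity of $\hat\psi$ to obtain, for each direction $\omega\in\mathbb{S}^{n-1}$, an open spherical neighborhood $W_\omega$ of $\omega$ and a closed interval $[a_\omega,b_\omega]\subset(0,\infty)$ on which $\hat\psi(ru)\neq 0$ throughout $W_\omega\times[a_\omega,b_\omega]$. Compactness of $\mathbb{S}^{n-1}$ would then furnish a finite subcover $W_1,\dots,W_N$ with associated intervals $[a_j,b_j]$.

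Next I would choose a smooth partition of unity $\{\chi_j\}_{j=1}^{N}$ on $\mathbb{S}^{n-1}$ subordinate to $\{W_j\}$ and non-negative bumps $\rho_j\in C_c^\infty((a_j,b_j))$ normalized by $\int_0^\infty \rho_j(r)\,\mathrm{d}r/r=1$, and define
\[
\hat\eta(u)=\sum_{j=1}^{N} \chi_j\!\left(\frac{u}{|u|}\right)\rho_j(|u|)\,\overline{\hat\psi(u)}^{-1}\quad(u\neq 0),\qquad \hat\eta(0)=0,
\]
each summand understood to vanish off its own support. Since $\hat\psi\neq 0$ everywhere the numerator of each summand is supported, $\hat\eta$ is smooth; since all supports sit in a fixed annulus bounded away from $0$, $\hat\eta\in\mathcal{D}(\mathbb{R}^n)$ and vanishes in a neighborhood of the origin, automatically placing $\eta$ in $\mathcal{S}_0(\mathbb{R}^n)$. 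Substituting into the defining integral would then give $c_{\psi,\eta}(\omega)=\sum_j \chi_j(\omega)\int_0^\infty \rho_j(r)\,\mathrm{d}r/r=\sum_j \chi_j(\omega)=1$ for every $\omega$.

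The main conceptual hurdle is that $1/\hat\psi$ need not exist globally; my remedy is to patch local reciprocals on the conic annular pieces supplied by non-degeneracy and compactness of the sphere, using the partition of unity to glue them together. Insisting that each radial bump $\rho_j$ be compactly supported strictly inside $(0,\infty)$ is precisely what forces $\hat\eta$ to vanish near the origin and hence delivers $\mathcal{S}_0$-membership of $\eta$ for free via the moment-vanishing characterization.
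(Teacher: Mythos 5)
Your necessity argument matches the paper's: a vanishing ray forces $c_{\psi,\eta}(\omega_0)=0$ for every $\eta$, which is incompatible with a reconstruction constant that can appear in the denominator of the inversion formula. For sufficiency, however, you take a genuinely different route from the paper. The paper normalizes $\hat\psi$ itself by the (strictly positive, by non-degeneracy) self-calibration constant $c_{\psi,\psi}(\omega)=\int_0^\infty |\hat\psi(r\omega)|^2\,r^{-1}\mathrm{d}r$, setting $\hat\eta(u)=\hat\psi(u)/c_{\psi,\psi}(u/|u|)$; it then invokes the polar-coordinate characterization of $\mathcal{S}(\mathbb{R}^n)$ from \cite[Prop.\ 1.1]{drozhzhinov-z4} to conclude $\hat\eta\in\mathcal{S}(\mathbb{R}^n)$ with flat vanishing at the origin, giving $c_{\psi,\eta}=1$. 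Your construction instead patches local reciprocals of $\hat\psi$ over a finite conic--annular cover obtained from continuity of $\hat\psi$ and compactness of $\mathbb{S}^{n-1}$, glued by a spherical partition of unity and radial bumps normalized against $\mathrm{d}r/r$. Both are correct; yours is more self-contained (only elementary compactness and partition-of-unity machinery, no polar-coordinate Schwartz characterization and no need to check smoothness or positivity of $c_{\psi,\psi}(\omega)$), and it incidentally produces a reconstruction wavelet whose Fourier transform is compactly supported in an annulus away from $0$, a stronger property than what the paper's $\eta$ enjoys. The trade-off is that the paper's argument is shorter and produces a single closed-form $\eta$ rather than one depending on a cover and cutoffs.
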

\begin{proof} The necessity is clear, for if $\hat{\psi}(r w_{0})$ identically vanishes in the direction of $w_{0}\in\mathbb{S}^{n-1}$, then $c_{\psi,\eta}(w_{0})=0$ (cf. (\ref{wnwneq7})) for any $\eta\in\mathcal{S}_{0}(\mathbb{R}^{n})$. Suppose now that $\psi$ is non-degenerate. As in (\ref{wnwneq7}), we write
$c_{\psi,\psi}(\omega)=\int_{0}^{\infty}|\hat{\psi}(r\omega)|^{2}r^{-1}\mathrm{d}r>0.$ Set
$\varrho(r,w)= \hat{\psi}(rw)/c_{\psi,\psi}(w),$ $(r,w)\in [0,\infty)\times\mathbb{S}^{n-1};$
obviously, if we prove that $\varrho(\left|u\right|, u/\left|u\right|)\in \mathcal{S}(\mathbb{R}^{n})$ and all its partial derivatives vanish at the origin, then $\eta$ given by $\hat{\eta}(u)=\varrho(\left|u\right|, u /\left|u\right|)$ will be a reconstruction wavelet for $\psi$ and actually $c_{\psi,\eta}=1$. By the characterization theorem  of test functions from  $\mathcal{S}(\mathbb{R}^{n})$ in polar coordinates \cite[Prop. 1.1]{drozhzhinov-z4}, the fact $\hat{\eta}\in\mathcal{S}(\mathbb{R}^{n})$ is a consequence of the relations

$$\left.\left(\frac{\partial}{\partial r}\right)^{k} \varrho(r,\omega)\right|_{r=0}=0,\ \ \ k=0,1,\dots \:; $$
the same relations show that all partial derivatives of $\hat{\eta}$ vanish at the origin, and hence $\eta\in\mathcal{S}_{0}(\mathbb{R}^{n})$.
\end{proof}

 In \cite{holschneider}, (\ref{wnwneq8}) was extended to $\mathcal{S}'_0(\mathbb{R}^{n})$ via duality arguments, the main step being the formula
$$
\int^{\infty}_{0}\int_{\mathbb{R}^{n}}\mathcal{W}_{\psi}f(x,y)\Phi(x,y)\frac{\mathrm{d}x\mathrm{d}y}{y}= \left\langle
f(t),\mathcal{M}_{\bar{\psi}}\Phi\:(t)\right\rangle ,
$$
valid for $\Phi \in
\mathcal{S}(\mathbb{H}^{n+1})$ and  $f\in\mathcal{S}'_{0}(\mathbb{R}^{n})$.
It can be easily extended to the $E$-valued case, as the next proposition shows.

\begin{proposition}
\label{wnwp3.4}
 Let $\mathbf{f}\in\mathcal{S}'_{0}(\mathbb{R}^{n},E)$ and $\psi\in\mathcal{S}_{0}(\mathbb{R}^{n})$. Then
\begin{equation}
\label{wnwneq9}
\int^{\infty}_{0}\int_{\mathbb{R}^{n}}\mathcal{W}_{\psi}\mathbf{f}(x,y)\Phi(x,y)\frac{\mathrm{d}x\mathrm{d}y}{y}= \left\langle
\mathbf{f}(t),\mathcal{M}_{\bar{\psi}}\Phi\:(t)\right\rangle,
\end{equation}
for all $\Phi \in
\mathcal{S}(\mathbb{H}^{n+1}).$
\end{proposition}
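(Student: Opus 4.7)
The plan is to reduce the vector-valued identity to the scalar one via duality. Since $E$ is a Banach space, Hahn--Banach guarantees that an element of $E$ is determined by its values under all continuous linear functionals $e^\ast\in E^\ast$, so it suffices to show that $e^\ast$ applied to both sides of (\ref{wnwneq9}) yields the same scalar, for every $e^\ast\in E^\ast$.

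First I would verify that the integral on the left-hand side makes sense as a Bochner integral in $E$. Because $\mathbf{f}\in\mathcal{S}'_{0}(\mathbb{R}^{n},E)$ is continuous with respect to the Lizorkin topology, an equicontinuity argument analogous to the proof of Proposition \ref{wnwp2} (applied to the singleton $\{\mathbf{f}\}$ and the family of dilated--translated wavelets $y^{-n}\bar\psi((x-\cdot)/y)$, which form a bounded set in $\mathcal{S}_{0}(\mathbb{R}^{n})$ with norms controlled polynomially in $|x|$, $y$ and $1/y$) produces constants $k,l,C$ such that
\[
\bigl\|\mathcal{W}_{\psi}\mathbf{f}(x,y)\bigr\|\le C\Bigl(y+\tfrac{1}{y}\Bigr)^{k}(1+|x|)^{l},\qquad (x,y)\in\mathbb{H}^{n+1}.
\]
Combined with the defining rapid decay of $\Phi\in\mathcal{S}(\mathbb{H}^{n+1})$, this yields
\[
\iint_{\mathbb{H}^{n+1}}\bigl\|\mathcal{W}_{\psi}\mathbf{f}(x,y)\bigr\|\,|\Phi(x,y)|\,\frac{\mathrm{d}x\mathrm{d}y}{y}<\infty,
\]
so the Bochner integral is well defined and continuous linear functionals pass through it.

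Next, fix any $e^{\ast}\in E^{\ast}$ and set $f_{e^\ast}:=e^{\ast}\!\circ\!\mathbf{f}\in\mathcal{S}'_{0}(\mathbb{R}^{n})$. Since $e^\ast$ is continuous and linear and commutes with the evaluation pairing, one has $\mathcal{W}_{\psi}f_{e^\ast}(x,y)=e^{\ast}\!\bigl(\mathcal{W}_{\psi}\mathbf{f}(x,y)\bigr)$ pointwise on $\mathbb{H}^{n+1}$, and also $e^{\ast}\bigl(\langle\mathbf{f},\mathcal{M}_{\bar{\psi}}\Phi\rangle\bigr)=\langle f_{e^\ast},\mathcal{M}_{\bar{\psi}}\Phi\rangle$. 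Applying $e^\ast$ under the Bochner integral on the left-hand side of (\ref{wnwneq9}) and invoking the scalar identity from \cite{holschneider} for $f_{e^{\ast}}$ then gives $e^{\ast}(\text{LHS})=e^{\ast}(\text{RHS})$.

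Since the preceding identity holds for every $e^\ast\in E^\ast$, the Hahn--Banach theorem yields equality of the two elements of $E$, completing the proof. The only real obstacle is the Bochner-integrability verification in the first step; the rest is a routine transfer of the scalar result. If one wishes to avoid spelling out equicontinuity again, one can instead remark that $\mathcal{W}_{\psi}\mathbf{f}$ lies in the space of $E$-valued $C^{\infty}$ functions on $\mathbb{H}^{n+1}$ of tempered growth (in the sense compatible with the topology of $\mathcal{S}(\mathbb{H}^{n+1})$), which is the natural target of the $E$-valued wavelet transform and which is paired with $\mathcal{S}(\mathbb{H}^{n+1})$ precisely by the measure $\mathrm{d}x\mathrm{d}y/y$.
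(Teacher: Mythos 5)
Your proof is correct, but it takes a genuinely different route from the paper's. The paper's argument proceeds by continuity and density: it shows that both $\mathcal{W}_{\psi}$ and the map $T\mathbf{f}:=\mathbf{f}\circ\mathcal{M}_{\bar{\psi}}$ are continuous linear operators from $\mathcal{S}'_{0}(\mathbb{R}^{n},E)$ into $\mathcal{S}'(\mathbb{H}^{n+1},E)$, invokes the nuclearity of $\mathcal{S}'_{0}(\mathbb{R}^{n})$ to conclude that the algebraic tensor product $\mathcal{S}'_{0}(\mathbb{R}^{n})\otimes E$ is dense, and then checks the identity on the simple tensors $\mathbf{f}=f\mathbf{v}$ using the scalar formula from Holschneider. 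You instead scalarize directly via Hahn--Banach: compose both sides with an arbitrary $e^{\ast}\in E^{\ast}$, observe that $e^{\ast}$ commutes with the (Bochner) integral and with the distributional pairing, and apply the scalar identity to $e^{\ast}\circ\mathbf{f}$. Your route is more elementary in that it avoids nuclearity and the density of $\mathcal{S}'_{0}(\mathbb{R}^{n})\otimes E$, but it does require you to justify the Bochner integrability of $\mathcal{W}_{\psi}\mathbf{f}\cdot\Phi$ in $E$ up front (which you do, via the equicontinuity bound analogous to Proposition \ref{wnwp2}); note that this same polynomial bound is also implicit in the paper's step identifying $\mathcal{W}_{\psi}\mathbf{f}$ with a member of $\mathcal{S}'(\mathbb{H}^{n+1},E)$, so the two approaches share that ingredient. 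The trade-off is essentially Hahn--Banach plus Hille's theorem on exchanging functionals and Bochner integrals versus nuclearity plus a density argument; both are clean and both are standard.
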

\begin{proof} The same argument used in Proposition \ref{wnwp2} shows that $$\mathcal{W}_{\psi}:\mathcal{S}'_{0}(\mathbb{R}^{n},E)\mapsto\mathcal{S}'(\mathbb{H}^{n+1},E)$$ is continuous, where we identify the vector-valued function $\mathcal{W}_{\psi}\mathbf{f}$ with the vector-valued distribution given by
$$
\left\langle \mathcal{W}_{\psi}\mathbf{f}(x,y),\Phi(x,y)\right\rangle=\int^{\infty}_{0}\int_{\mathbb{R}^{n}}\mathcal{W}_{\psi}\mathbf{f}(x,y)\Phi(x,y)\frac{\mathrm{d}x\mathrm{d}y}{y}
$$
 The linear map $T:\mathcal{S}'_{0}(\mathbb{R}^{n},E)\mapsto\mathcal{S}'(\mathbb{H}^{n+1},E)$ given by
$
\left\langle (T\mathbf{f})(x,y),\Phi(x,y)\right\rangle=\left\langle \mathbf{f}(t),\mathcal{M}_{\bar{\psi}}\Phi\:(t)\right\rangle ,
$
is continuous as well. Thus, if we show that $\mathcal{W}_{\psi}$ and $T$ coincide on a dense subset of $\mathcal{S}'_{0}(\mathbb{R}^{n},E)$, we would have (\ref{wnwneq9}). The nuclearity of $\mathcal{S}'_{0}(\mathbb{R}^{n})$ implies that $\mathcal{S}'_{0}(\mathbb{R}^{n})\otimes E\subset\mathcal{S}'_{0}(\mathbb{R}^{n},E)$ is dense; thus, it is enough to verify (\ref{wnwneq9}) for $\mathbf{f}=f\mathbf{v}$, where $f\in\mathcal{S}'_{0}(\mathbb{R}^{n})$ and $\mathbf{v}\in E$. Now, the scalar-valued case implies
\begin{equation*}
\left\langle \mathcal{W}_{\psi}(f\mathbf{v})(x,y),\Phi(x,y)\right\rangle =\left\langle \mathcal{W}_{\psi}f(x,y),\Phi(x,y)\right\rangle\mathbf{v}
=\left\langle f(t)\mathbf{v},\mathcal{M}_{\bar{\psi}}\Phi\:(t)\right\rangle,
\end{equation*}
as required.
\end{proof}

We can now extend the wavelet synthesis operator (\ref{wnwneq6}) to $\mathcal{S}'_{0}(\mathbb{H}^{n+1},E)$. This will yield an important ``desingularization formula'' for $E$-valued distributions. Let $\mathbf{K}\in\mathcal{S}'_{0}(\mathbb{H}^{n+1},E)$. We define $\mathcal{M}_{\psi}:\mathcal{S}'_{0}(\mathbb{H}^{n+1},E)\mapsto\mathcal{S}'_{0}(\mathbb{R}^{n},E)$, a continuous linear map, as
\begin{equation*}
\left\langle
\mathcal{M}_{\psi}\mathbf{K}(t),\rho(t)\right\rangle=\left\langle
\mathbf{K}(x,y),\mathcal{W}_{\bar{\psi}}\rho(x,y)\right\rangle, \ \ \ \rho\in\mathcal{S}_{0}(\mathbb{R}^{n}).
\end{equation*}

\begin{proposition}
\label{wnwth1}
Let $\psi\in\mathcal{S}_{0}(\mathbb{R}^{n})$ be non-degenerate and let $\eta\in\mathcal{S}_{0}(\mathbb{R}^{n})$ be a reconstruction wavelet for it. Then,
\begin{equation}
\label{wnwneq11}
\mathrm{Id}_{\mathcal{S}'_0(\mathbb{R}^{n},E)}=\frac{1}{c_{\psi,\eta}}\mathcal{M}_{\eta}\mathcal{W}_{\psi} .
\end{equation}
Furthermore, we have the desingularization formula,
\begin{equation}
\label{wnwneq12}
\left\langle
\mathbf{f}(t),\rho(t)\right\rangle=\frac{1}{c_{\psi,\eta}}\int^{\infty}_{0}\int_{\mathbb{R}^{n}}\mathcal{W}_{\psi}\mathbf{f}(x,y)\mathcal{W}_{\bar{\eta}}\rho(x,y)\frac{\mathrm{d}x\mathrm{d}y}{y} \:,
\end{equation}
valid for all $\mathbf{f}\in\mathcal{S}'_{0}(\mathbb{R}^{n},E)$ and $\rho\in\mathcal{S}_{0}(\mathbb{R}^{n})$.
\end{proposition}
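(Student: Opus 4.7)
The plan is to derive (\ref{wnwneq12}) first and then deduce (\ref{wnwneq11}) as an immediate corollary by unfolding the definition of $\mathcal{M}_{\eta}$ on $\mathcal{S}'_{0}(\mathbb{H}^{n+1},E)$. The key observation is that we may combine the scalar-valued reconstruction formula (\ref{wnwneq8}) on test functions with the adjoint-type identity (\ref{wnwneq9}) from Proposition \ref{wnwp3.4} to avoid any delicate argument on the distribution side.

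First I would establish a small symmetry lemma: if $\eta$ is a reconstruction wavelet for $\psi$, then $\bar{\psi}$ is a reconstruction wavelet for $\bar{\eta}$ and the reconstruction constants agree, $c_{\bar{\eta},\bar{\psi}}=c_{\psi,\eta}$. Using $\widehat{\bar{f}}(u)=\overline{\hat{f}(-u)}$ we have
\begin{equation*}
c_{\bar{\eta},\bar{\psi}}(\omega)=\int_{0}^{\infty}\overline{\widehat{\bar{\eta}}(r\omega)}\widehat{\bar{\psi}}(r\omega)\frac{\mathrm{d}r}{r}=\int_{0}^{\infty}\hat{\eta}(-r\omega)\overline{\hat{\psi}(-r\omega)}\frac{\mathrm{d}r}{r}=c_{\psi,\eta}(-\omega),
\end{equation*}
and the last quantity is independent of $\omega$ by hypothesis and equals $c_{\psi,\eta}$. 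Since $\bar{\eta}$ and $\bar{\psi}$ again lie in $\mathcal{S}_{0}(\mathbb{R}^{n})$, the scalar reconstruction formula (\ref{wnwneq8}) applied to the pair $(\bar{\eta},\bar{\psi})$ gives $\mathcal{M}_{\bar{\psi}}\mathcal{W}_{\bar{\eta}}\rho=c_{\psi,\eta}\rho$ for every $\rho\in\mathcal{S}_{0}(\mathbb{R}^{n})$.

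Next I would substitute $\Phi=\mathcal{W}_{\bar{\eta}}\rho\in\mathcal{S}(\mathbb{H}^{n+1})$ into identity (\ref{wnwneq9}) of Proposition \ref{wnwp3.4}. This directly yields
\begin{equation*}
\int_{0}^{\infty}\!\!\int_{\mathbb{R}^{n}}\mathcal{W}_{\psi}\mathbf{f}(x,y)\,\mathcal{W}_{\bar{\eta}}\rho(x,y)\frac{\mathrm{d}x\,\mathrm{d}y}{y}=\bigl\langle\mathbf{f}(t),\mathcal{M}_{\bar{\psi}}\mathcal{W}_{\bar{\eta}}\rho(t)\bigr\rangle=c_{\psi,\eta}\langle\mathbf{f},\rho\rangle,
\end{equation*}
which is precisely the desingularization formula (\ref{wnwneq12}). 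Finally, reading the left-hand side through the very definition of $\mathcal{M}_{\eta}$ on $\mathcal{S}'_{0}(\mathbb{H}^{n+1},E)$, namely $\langle\mathcal{M}_{\eta}\mathbf{K},\rho\rangle=\langle\mathbf{K},\mathcal{W}_{\bar{\eta}}\rho\rangle$, we recognize the integral as $\langle\mathcal{M}_{\eta}\mathcal{W}_{\psi}\mathbf{f},\rho\rangle$, so $\mathcal{M}_{\eta}\mathcal{W}_{\psi}\mathbf{f}=c_{\psi,\eta}\mathbf{f}$ in $\mathcal{S}'_{0}(\mathbb{R}^{n},E)$, which is (\ref{wnwneq11}).

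No serious obstacle is anticipated: Proposition \ref{wnwp3} guarantees the existence of $\eta$, Proposition \ref{wnwp3.4} supplies the pairing, and the scalar reconstruction formula from \cite{holschneider} does the rest. The only point that requires a tiny bit of care is the conjugation bookkeeping needed to reverse the roles of $\psi$ and $\eta$, which is the purpose of the symmetry step above.
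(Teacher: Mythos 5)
Your proposal is correct and takes essentially the same route as the paper: both rely on substituting $\Phi=\mathcal{W}_{\bar{\eta}}\rho$ into Proposition \ref{wnwp3.4}, invoking the scalar reconstruction formula (\ref{wnwneq8}) for the pair $(\bar\eta,\bar\psi)$, and using the symmetry $c_{\psi,\eta}=c_{\bar\eta,\bar\psi}$. The only differences are cosmetic — you prove (\ref{wnwneq12}) first and then read off (\ref{wnwneq11}), whereas the paper runs the chain of equalities in the opposite direction, and you spell out the Fourier-conjugation computation behind $c_{\bar\eta,\bar\psi}=c_{\psi,\eta}$, which the paper uses without comment.
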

\begin{proof}
If we apply the definition of $\mathcal{M}_{\eta}$, Proposition \ref{wnwp3.4}, (\ref{wnwneq8}), and use the fact that $c_{\psi,\eta}=c_{\bar{\eta},\bar{\psi}}$, we obtain
\begin{align*}\frac{1}{c_{\psi,\eta}}\left\langle \mathcal{M}_{\eta}\mathcal{W}_{\psi}\mathbf{f}(t),\rho(t)\right\rangle
&
=\frac{1}{c_{\psi,\eta}}\int^{\infty}_{0}\int_{\mathbb{R}^{n}}\mathcal{W}_{\psi}\mathbf{f}(x,y)\mathcal{W}_{\bar{\eta}}\rho(x,y)\frac{\mathrm{d}x\mathrm{d}y}{y}
\\
&
\\
&
=\frac{1}{c_{\psi,\eta}}\left\langle \mathcal{W}_{\psi}\mathbf{f}(x,y),\mathcal{W}_{\bar{\eta}}\rho(x,y)\right\rangle
=\left\langle \mathbf{f}, \frac{1}{c_{\bar{\eta},\bar{\psi}}} \mathcal{M}_{\bar{\psi}}\mathcal{W}_{\bar{\eta}}\rho \right\rangle
\\
&
=\left\langle \mathbf{f}(t),\rho(t)\right\rangle.
\end{align*}
So both (\ref{wnwneq11}) and (\ref{wnwneq12}) have been established.
\end{proof}

\section{Proofs of the Tauberian theorems}\label{proofs}
We now proceed to give proofs of the Tauberian theorems \ref{Tqth1} and \ref{Tqth2}. We need a series of lemmas. We start with a technical one.

\begin{lemma}
\label{Tql1} Let $\mathbf{f}\in \mathcal{S}'(\mathbb{R}^{ n},E)$.
The estimate $(\ref{Tqeq1})$ is equivalent to one of the form $(\ref{TqeqA1})$ ($k$ might be a different exponent). If
\begin{equation}
\label{Tqeq6.1}
\lim_{\lambda\to\infty}\frac{1}{\lambda^{\alpha}L(\lambda)}M_{\varphi}^\mathbf{f}(\lambda x,\lambda y)=\mathbf{M}_{x,y} \ \ \ \mbox{in }E
\end{equation}
exists for every $(x,y)\in \mathbb{H}^{n+1}\cap\mathbb{S}^{n}$, so does it for every $(x,y)\in \mathbb{H}^{n+1}$.
\end{lemma}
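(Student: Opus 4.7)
The plan is to address the two assertions in turn. For the equivalence, the direction $(\ref{TqeqA1})\Rightarrow(\ref{Tqeq1})$ is immediate: on $\mathbb{H}^{n+1}\cap\mathbb{S}^n$ one has $|x|\leq 1$ and $0<y\leq 1$, whence $(1+|x|)^l\leq 2^l$ and $y^k(1/y+y)^k=(1+y^2)^k\leq 2^k$, yielding $(\ref{Tqeq1})$ with the same $k$.

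For the converse (working with $\lambda\to 0^{+}$; the case at infinity is analogous), I would introduce polar coordinates $(x,y)=r(x',y')$ with $r=\sqrt{|x|^2+y^2}$ and $(x',y')\in\mathbb{H}^{n+1}\cap\mathbb{S}^n$, and split according to whether $\lambda r\leq\lambda_0$ or $\lambda r>\lambda_0$. In the first regime, applying $(\ref{Tqeq1})$ at scale $\lambda r$ and direction $(x',y')$ gives
\begin{equation*}
\|M^\mathbf{f}_\varphi(\lambda x,\lambda y)\|\leq C(\lambda r)^\alpha L(\lambda r)(r/y)^{k}=C\lambda^\alpha L(\lambda)\cdot\frac{L(\lambda r)}{L(\lambda)}\cdot\frac{r^{\alpha+k}}{y^k};
\end{equation*}
Potter's bounds for $L$ control $L(\lambda r)/L(\lambda)$ by $K\max(r^\epsilon,r^{-\epsilon})$ for arbitrary $\epsilon>0$, and combined with the elementary estimates $r\leq(1+|x|)(1+y)$ and $1+y\leq 2(1/y+y)$, the right-hand side is absorbed into an inequality of the form $(\ref{TqeqA1})$ with a possibly enlarged $k$. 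The main obstacle is the second regime $\lambda r>\lambda_0$: here $(\ref{Tqeq1})$ is out of reach, so I would fall back on the universal polynomial bound supplied by Proposition \ref{wnwp2}, namely $\|M^\mathbf{f}_\varphi(\lambda x,\lambda y)\|\leq C(\lambda y+1/(\lambda y))^{k_0}(1+\lambda|x|)^{l_0}$. The trick is to exploit the inequality $1/\lambda<r/\lambda_0$ to trade negative powers of $\lambda$ for positive powers of $r$: for any $m\geq 0$ one has $(1/\lambda)^{k_0}\leq Cr^{k_0+m}\lambda^m$, and Potter's lower bound $L(\lambda)\geq c_\epsilon\lambda^\epsilon$ (valid for any $\epsilon>0$, as $\lambda\to 0^{+}$) then allows one to replace $\lambda^m$ by a constant multiple of $\lambda^\alpha L(\lambda)$ as soon as $m>\alpha$, after which the remaining $r$-factors are bounded as in the first regime.

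For the second assertion, fix $(x,y)\in\mathbb{H}^{n+1}$ and write it as $r(x',y')$ with $(x',y')\in\mathbb{H}^{n+1}\cap\mathbb{S}^n$. The identity
\begin{equation*}
\frac{M^\mathbf{f}_\varphi(\lambda x,\lambda y)}{\lambda^\alpha L(\lambda)}=r^\alpha\,\frac{L(\lambda r)}{L(\lambda)}\,\frac{M^\mathbf{f}_\varphi((\lambda r)x',(\lambda r)y')}{(\lambda r)^\alpha L(\lambda r)}
\end{equation*}
reduces the claim to a passage to the limit: as $\lambda\to\infty$, the middle factor tends to $1$ by slow variation of $L$ and the last factor tends to $\mathbf{M}_{x',y'}$ by hypothesis, so $\mathbf{M}_{x,y}:=r^\alpha\mathbf{M}_{x/r,y/r}$ is the required limit.
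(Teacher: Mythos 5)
Your proof is correct and follows essentially the same route as the paper's: polar decomposition $(x,y)=r(x',y')$, a case split at $\lambda r \lessgtr \lambda_0$, Potter's bound plus the slow-variation estimate in the near regime, the universal polynomial bound of Proposition \ref{wnwp2} together with the trade-off $1/\lambda < r/\lambda_0$ in the far regime, and the same factorization $r^\alpha\,(L(\lambda r)/L(\lambda))\,\cdot\,(\cdots)$ for the second assertion. The only cosmetic difference is that the paper invokes the specific estimate $1/L(\lambda)=o(\lambda^{-1})$ (at $0^+$) rather than a general Potter lower bound $L(\lambda)\geq c_\epsilon\lambda^\epsilon$; both serve the same purpose of absorbing the excess negative powers of $\lambda$.
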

\begin{proof} We only need to show that (\ref{Tqeq1}) implies (\ref{TqeqA1}). Our assumption is that there are constants $C_{1},\lambda_{0}>0$ such that
\begin{equation*}
\left\|M^{\mathbf{f}}_{\varphi}\left(\lambda
\xi,\lambda\cos\vartheta\right)\right\|< \frac{C_{1}\lambda^{\alpha}L(\lambda)}{(\cos\vartheta)^{k}},
\end{equation*}
for all $\left|\xi\right|^{2}+(\cos\vartheta)^{2}=1$ and  $0<\lambda\leq \lambda_{0}$ (resp. $\lambda_{0}\leq \lambda$).
We can assume that $1+\left|\alpha\right|\leq k$ and $\lambda_{0}< 1$ (resp. $1<\lambda_{0}$). Potter's estimate \cite[p. 25]{bingham} implies that we may assume
\begin{equation}\label{wnweql11}
\frac{L(r\lambda)}{L(\lambda)}< C_{2}\frac{(1+r)^{2}}{r} ,  \ \ \  \text{for }\lambda,\lambda r\in(0,\lambda_{0}] \ (\mbox{resp. } \lambda,\lambda r\in[\lambda_{0},\infty)\:).
\end{equation}
In addition, since $1/L(\lambda)=o(\lambda^{-1})$ as $\lambda\to0^{+}$ (resp. $1/L(\lambda)=o(\lambda)$, as $\lambda\to\infty$) \cite{bingham,seneta}, we can assume
\begin{equation}\label{wnweql22}
\frac{1}{L(\lambda)}<\frac{C_{3}}{\lambda}, \mbox{ for } 0<\lambda\leq \lambda_{0}
\
\left( \mbox{resp. }  \frac{1}{L(\lambda)}<C_{3}\lambda, \mbox{ for } \lambda_{0}\leq \lambda\: \right).
\end{equation}
After this preparation, we are ready to give the proof. For $(x,y)\in\mathbb{H}^{n+1}$ write $x=r\xi$ and $y=r\cos\vartheta$, with $r=\left|(x,y)\right|$. We always keep $\lambda\leq \lambda_{0}$ (resp. $\lambda_{0}\leq \lambda$). If $r\lambda\leq \lambda_{0}$ (resp. $\lambda_{0}\leq r\lambda$), we have that
\begin{align*}
\left\|M^{\mathbf{f}}_{\varphi}\left(\lambda r
\xi,\lambda r\cos\vartheta\right)\right\|&<\frac{C_{1}}{y^{k}}\lambda^{\alpha} L(\lambda r) r^{\alpha+k}
<C_{1}C_{2}\lambda^{\alpha} L(\lambda)\frac{(1+r)^{\alpha+k+1}}{y^{k}}
\\
&
<C_{4}\lambda^{\alpha}L(\lambda)\left(\frac{1}{y}+y\right)^{\alpha+2k+1} \left(1+\left|x\right|\right)^{\alpha+k+1},
\end{align*}
with $C_{4}=2^{\alpha+k+1}C_{1}C_{2}$. We now analyze the case $\lambda_{0}<\lambda r$ (resp. $\lambda r<\lambda_{0}$). Proposition \ref{wnwp2} implies the existence of $k_{1},l_{1}\in\mathbb{N}$, $k_{1}\geq k,$ and $C_{5}$ such that
\begin{align*}
\left\|M^{\mathbf{f}}_{\varphi}\left(\lambda x,\lambda y\right)\right\|&<C_{5}\left(\frac{1}{\lambda y}+\lambda y\right)^{k_{1}}\left(1+\lambda\left|x\right|\right)^{l_{1}}
\\
&< C_{5} \lambda^{\alpha}L(\lambda)\left(\frac{1}{ y}+ y\right)^{k_{1}}\left(1+\left|x\right|\right)^{l_{1}} \frac{1}{\lambda^{\alpha+k_{1}}L(\lambda)}
\\
&\left(\mbox{resp. }< C_{5} \lambda^{\alpha}L(\lambda)\left(\frac{1}{ y}+ y\right)^{k_{1}}\left(1+\left|x\right|\right)^{l_{1}} \frac{\lambda^{k_{1}+l_{1}}}{\lambda^{\alpha}L(\lambda)}\right)
\\
&
<C_{3}C_{5} \lambda^{\alpha}L(\lambda)\left(\frac{1}{ y}+ y\right)^{k_{1}}\left(1+\left|x\right|\right)^{l_{1}} \left(\frac{r}{\lambda_{0}}\right)^{k_{1}+\alpha+1}
\\
&\left(\mbox{resp. }< C_{3}C_{5} \lambda^{\alpha}L(\lambda)\left(\frac{1}{ y}+ y\right)^{k_{1}}\left(1+\left|x\right|\right)^{l_{1}} \left(\frac{\lambda_{0}}{r}\right)^{k_{1}+l_1-\alpha+1}\right)
\\
&
<C_{6}\lambda^{\alpha}L(\lambda)\left(\frac{1}{ y}+ y\right)^{\alpha+2k_{1}+1}\left(1+\left|x\right|\right)^{\alpha+l_{1}+k_{1}+1}
\\
&
\left(\mbox{resp. }<C_{6}\lambda^{\alpha}L(\lambda)\left(\frac{1}{ y}+ y\right)^{2k_{1}+l_1-\alpha+1}\left(1+\left|x\right|\right)^{l_{1}}\right),
\end{align*}
with $C_{6}=C_{3}C_{5}(2/\lambda_{0})^{\alpha+k_{1}+1}$ (resp. $C_{6}=C_{3}C_{5}\lambda_{0}^{k_{1}+l_1-\alpha+1}$).
Therefore, if $C=\max\left\{C_{4},C_{6}\right\}$, $k_{2}>\left|\alpha\right|+2k_{1}+l_1+1$ and $l_{2}>\alpha+l_{1}+k_{1}+1$,
$$
\left\|M^{\mathbf{f}}_{\varphi}\left(\lambda x,\lambda y\right)\right\|< C\lambda^{\alpha}L(\lambda)\left(\frac{1}{ y}+ y\right)^{k_{2}}\left(1+\left|x\right|\right)^{l_{2}},
$$
for all $(x,y)\in\mathbb{H}^{n+1}$ and $0<\lambda\leq \lambda_{0}$ (resp. $\lambda_{0}\leq\lambda$).

 For the second part of the lemma, fix $(x,y)\in\mathbb{H}^{n+1}$ and write it as $(x,y)=(r\xi,r \cos\vartheta)$, where $(\xi,\cos\vartheta)\in\mathbb{H}^{n+1}\cap\mathbb{S}^{n}$. Then, as $\lambda\to0^{+}$ (resp. $\lambda\to\infty$), we have
\begin{align*}\frac{1}{\lambda^{\alpha}L(\lambda)}
M_{\varphi}^\mathbf{f}(\lambda r \xi,\lambda r \cos\vartheta)&=
\frac{L(\lambda r)}{L(\lambda)}r^{\alpha}\left(\frac{1}{(\lambda r)^{\alpha}L(\lambda r)}M_{\varphi}^\mathbf{f}(\lambda r \xi,\lambda r \cos\vartheta)\right)
\\
&
\longrightarrow 1\cdot r^{\alpha} M_{\xi,\cos\vartheta}\ \ \ \text{in } E.
\end{align*}
\end{proof}

Observe that the restriction of $E$-valued tempered distributions to
$\mathcal{S}_0(\mathbb{R}^{n})$ defines a continuous linear projector
from $\mathcal{S}'(\mathbb{R}^{n},E)$ onto $\mathcal{S}'_0(\mathbb{R}^{n},E)$. For 
$\mathbf{f}\in\mathcal{S}'(\mathbb{R}^{n},E)$, we will keep calling by $\mathbf{f}$ its projection onto $\mathcal{S}'_0(\mathbb{R}^{n},E)$. In particular, it makes sense to talk about quasiasymptotics of $\mathbf{f}$ in $\mathcal{S}'_0(\mathbb{R}^{n},E)$ via this restriction projection. The results from Section \ref{waE} are key for the next lemma.

\begin{lemma}
\label{Tql2}  Let $\mathbf{f}\in\mathcal{S}'(\mathbb{R}^{n},E)$ and let $\varphi\in\mathcal{S}(\mathbb{R}^{n})$ be non-degenerate.

\begin{itemize}
\item [(i)] If there exists $k\in\mathbb{N}$ such that the estimate $(\ref{Tqeq1})$ holds, then $\mathbf{f}$ is quasiasymptotically bounded of degree $\alpha$ at the origin (resp. at infinity) with respect to $L$ in the space $\mathcal{S}'_{0}(\mathbb{R}^{n},E)$.
\item [(ii)] If the limit $(\ref{Tqeq6.1})$ exists for each $(x,y)\in \mathbb{H}^{n+1}\cap\mathbb{S}^{n}$ and there is a $k\in\mathbb{N}$ such that the estimate $(\ref{Tqeq1})$ is satisfied, then $\mathbf{f}$ has quasiasymptotic behavior of degree $\alpha$ at the origin (resp. at infinity) with respect to $L$ in the space $\mathcal{S}'_{0}(\mathbb{R}^{n},E)$.
\end{itemize}
\end{lemma}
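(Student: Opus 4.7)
\textbf{Proof strategy for Lemma \ref{Tql2}.}
The plan is to reduce both parts to the wavelet desingularization machinery of Section \ref{waE} by choosing an auxiliary Lizorkin wavelet that links $\mathcal{W}_\psi \mathbf{f}$ to $M^\mathbf{f}_\varphi$ through a simple convolution. Using the explicit example of Remark \ref{Tqr3}, fix a non-degenerate $h_0 \in \mathcal{S}_0(\mathbb{R}^n)$ with $\hat{h}_0(u)>0$ for every $u\neq 0$, and set $\psi(t) := \overline{(\varphi \ast h_0)(-t)}$. Then $\psi \in \mathcal{S}_0(\mathbb{R}^n)$ (because $\varphi \ast h_0 \in \mathcal{S}_0(\mathbb{R}^n)$), and $\hat{\psi}(r\omega) = \overline{\hat{\varphi}(r\omega)\hat{h}_0(r\omega)}$ cannot vanish identically in $r$ for any $\omega \in \mathbb{S}^{n-1}$; hence $\psi$ is non-degenerate and admits a reconstruction wavelet $\eta \in \mathcal{S}_0(\mathbb{R}^n)$ by Proposition \ref{wnwp3}. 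Because $\check{\bar{\psi}} = \varphi \ast h_0$ and convolution is associative for tempered distributions, a direct computation yields
\begin{equation*}
\mathcal{W}_\psi \mathbf{f}(\lambda x, \lambda y) \;=\; \bigl(M^\mathbf{f}_\varphi(\cdot, \lambda y) \ast (h_0)_{\lambda y}\bigr)(\lambda x) \;=\; \int_{\mathbb{R}^n} M^\mathbf{f}_\varphi\bigl(\lambda(x-yv), \lambda y\bigr)\, h_0(v)\, dv,
\end{equation*}
so Lemma \ref{Tql1} transports the Tauberian estimate (\ref{Tqeq1}) into an analogous bound $\|\mathcal{W}_\psi \mathbf{f}(\lambda x, \lambda y)\| \le C\lambda^\alpha L(\lambda)(y+1/y)^{k'}(1+|x|)^{l'}$ valid throughout $\mathbb{H}^{n+1}$.

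Now let $\rho \in \mathcal{S}_0(\mathbb{R}^n)$. Combining the dilation identity $\langle \mathbf{f}(\lambda t), \rho(t)\rangle = \lambda^{-n}\langle \mathbf{f}, \rho(\cdot/\lambda)\rangle$ with the desingularization formula (\ref{wnwneq12}), together with the elementary relation $\mathcal{W}_{\bar{\eta}}[\rho(\cdot/\lambda)](x,y) = \mathcal{W}_{\bar{\eta}}\rho(x/\lambda, y/\lambda)$, and then substituting $(x,y) = (\lambda\xi, \lambda s)$, one obtains
\begin{equation*}
\frac{\langle \mathbf{f}(\lambda t), \rho(t)\rangle}{\lambda^\alpha L(\lambda)} \;=\; \frac{1}{c_{\psi, \eta}} \int_0^\infty \int_{\mathbb{R}^n} \frac{\mathcal{W}_\psi \mathbf{f}(\lambda \xi, \lambda s)}{\lambda^\alpha L(\lambda)} \; \mathcal{W}_{\bar{\eta}}\rho(\xi, s) \; \frac{d\xi\, ds}{s}.
\end{equation*}
Since $\mathcal{W}_{\bar{\eta}}\rho \in \mathcal{S}(\mathbb{H}^{n+1})$ decays faster than any polynomial in $s+1/s$ and $1+|\xi|$, the transferred estimate on $\mathcal{W}_\psi \mathbf{f}$ produces an integrable $\lambda$-independent majorant of the integrand, which immediately yields part (i).

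For part (ii), Lemma \ref{Tql1} also upgrades the spherical hypothesis (\ref{Tqeq6.1}) to a pointwise limit of $M^\mathbf{f}_\varphi(\lambda x, \lambda y)/(\lambda^\alpha L(\lambda))$ on all of $\mathbb{H}^{n+1}$. A first application of the dominated convergence theorem to the inner $v$-integral produces, for each $(\xi, s)$, the pointwise limit of $\mathcal{W}_\psi \mathbf{f}(\lambda \xi, \lambda s)/(\lambda^\alpha L(\lambda))$; a second application to the $(\xi,s)$-integral then delivers convergence of $\langle \mathbf{f}(\lambda t), \rho(t)\rangle/(\lambda^\alpha L(\lambda))$ for every $\rho \in \mathcal{S}_0(\mathbb{R}^n)$. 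The uniform domination makes the limiting functional continuous on $\mathcal{S}_0(\mathbb{R}^n)$, thereby defining the required $\mathbf{g} \in \mathcal{S}_0'(\mathbb{R}^n, E)$. The principal obstacle is the opening construction: one has to convert the given Schwartz kernel $\varphi$, which may carry non-vanishing moments, into a non-degenerate Lizorkin wavelet $\psi$ coupled to $\varphi$ by a \emph{tame} convolution identity along which Tauberian bounds and limits transport cleanly; once this is done, the proof is a double dominated-convergence manipulation on the desingularization formula.
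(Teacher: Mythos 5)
Your proposal is correct and follows essentially the same route as the paper: you form a Lizorkin wavelet $\psi$ by convolving $\varphi$ with a fixed positive-Fourier-transform Lizorkin kernel (the paper takes $\hat{\psi}_1(u)=e^{-|u|-1/|u|}$ for the role of your $h_0$), express $\mathcal{W}_\psi\mathbf{f}$ as an integral of $M^{\mathbf{f}}_\varphi$ against that kernel, transport the bounds via Lemma \ref{Tql1}, and finish with the desingularization formula plus (for part (ii)) two layers of dominated convergence. The only cosmetic difference is the sign/conjugation convention in building $\psi$ and the explicit choice of auxiliary wavelet.
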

\begin{proof}
Consider the non-degenerate wavelet $\psi_{1}\in\mathcal{S}_{0}(\mathbb{R}^{n})$ given by $\hat{\psi}_{1}(u)=e^{-\left|u\right|-(1/\left|u\right|)}$. Set $\psi=\check{\bar{\varphi}}\ast\psi_{1}$; then, $\psi\in\mathcal{S}_{0}(\mathbb{R}^{n})$ is also a non-degenerate wavelet. First notice that $\mathcal{W}_{\psi}\mathbf{f}$ is given by
\begin{align*}
\mathcal{W}_{\psi}\mathbf{f}(x,y)&=\left\langle \mathbf{f}(x+yt),(\check{\varphi}\ast\bar{\psi}_{1})(t)\right\rangle
=\left\langle \mathbf{f}(x+yt),\int_{\mathbb{R}^{n}}\bar{\psi}_{1}(u)\varphi(u-t)\mathrm{d}u \right\rangle
\\
&
=\int_{\mathbb{R}^{n}}M^{\mathbf{f}}_{\varphi}(x+yu,y)\bar{\psi}_{1}(u)\mathrm{d}u.
\end{align*}
Find a reconstruction wavelet $\eta\in\mathcal{S}_{0}(\mathbb{R}^{n})$ for $\psi$.

\emph{Part (i)}.
By Lemma \ref{Tql1}, (\ref{Tqeq1}) is equivalent to the estimate (\ref{TqeqA1}) ($k$ might be however a different number). Thus,
\begin{align}
\label{Tqeq6.2}
\left\|M^{\mathbf{f}}_{\varphi}(\lambda x+\lambda yu,\lambda y)\right\|& 
< C \lambda^{\alpha}L(\lambda)\left(y+\frac{1}{y}\right)^{l+k}(1+|x|)^{l}(1+\left|u\right|)^{l}.
\end{align}
In view of the formula for $\mathcal{W}_{\psi}\mathbf{f}$ in terms of $M_{\varphi}$ and $\psi_{1}$, we arrive at the wavelet estimate
\begin{equation}
\label{Tqeq6.3}
\left\|\mathcal{W}_{\psi}\mathbf{f}(\lambda x,\lambda y)\right\|< C_{1}\lambda^{\alpha}L(\lambda)\left(1+|x|\right)^{l}\left(y+\frac{1}{y}\right)^{l+k},
\end{equation}
valid for all  $(x,y)\in\mathbb{H}^{n+1}$ and $\lambda\leq\lambda_{0}$ (resp. $\lambda_{0}\leq\lambda$), where $C_{1}=C\int_{\mathbb{R}^{n}}(1+\left|u\right|)^{l}\left|\bar{\psi}_{1}(u)\right|\mathrm{d}u$.  Let now $\rho\in\mathcal{S}_{0}(\mathbb{R}^{n})$ be arbitrary. Taking into account the desingularization formula (\ref{wnwneq12}) from Proposition \ref{wnwth1} and (\ref{Tqeq6.3}), we conclude that
$$
\left\|\left\langle \mathbf{f}(\lambda t),\rho(t)\right\rangle\right\|\leq \frac{1}{|c_{\psi,\eta}|}\int^{\infty}_{0}\int_{\mathbb{R}^{n}}\left\|\mathcal{W}_{\psi}\mathbf{f}(\lambda x,\lambda y)\right\||\mathcal{W}_{\bar{\eta}}\rho(x,y)|\frac{\mathrm{d}x\mathrm{d}y}{y}= O(\lambda^{\alpha}L(\lambda)),
$$
which shows the result.

\emph{Part (ii)}. By Lemma \ref{Tql1}, the limit (\ref{Tqeq6.1}) exists for each  $(x,y)\in\mathbb{H}^{n+1}$. 
The estimate (\ref{Tqeq6.2}) and the dominated convergence theorem for Bochner integrals ensure that, for each fixed $(x,y)\in\mathbb{H}^{n+1}$,
\begin{align*}
\frac{1}{\lambda^{\alpha}L(\lambda)}\mathcal{W}_{\psi}\mathbf{f}(\lambda x,\lambda y)&
=\int_{\mathbb{R}^{n}}\frac{1}{\lambda^{\alpha}L(\lambda)}M^{\mathbf{f}}_{\varphi}(\lambda x+\lambda yu,\lambda y)\bar{\psi}_{1}(u)\mathrm{d}u
\\
&
\longrightarrow\mathbf{G}(x,y):=\int_{\mathbb{R}^{n}}\mathbf{M}_{x+yu,y}\:\bar{\psi}_{1}(u)\mathrm{d}u,
\end{align*}
as $\lambda\to0^{+}$ (resp. $\lambda\to\infty$). Observe that the function $\mathbf{G}:\mathbb{H}^{n+1}\mapsto E$ is Bochner measurable and, because of (\ref{Tqeq6.3}), 
$$
\left\|\mathbf{G}(x,y)\right\|\leq C_{1}(1+|x|)^{l}\left(y+\frac{1}{y}\right)^{2l+k}, \ \ \ (x,y)\in\mathbb{H}^{n+1}.
$$
Let finally $\rho\in\mathcal{S}_{0}(\mathbb{R}^{n})$ be arbitrary. We can use the wavelet desingularization formula, in combination with the dominated convergence theorem, to deduce
\begin{align*}
\frac{1}{\lambda^{\alpha}L(\lambda)}\left\langle\mathbf{f}(\lambda t),\rho(t)\right\rangle&=\frac{1}{c_{\psi,\eta}}\int^{\infty}_{0}\int_{\mathbb{R}^{n}}\frac{1}{\lambda^{\alpha}L(\lambda)}\mathcal{W}_{\psi}\mathbf{f}(\lambda x,\lambda y)\mathcal{W}_{\bar{\eta}}\rho(x,y)\frac{\mathrm{d}x\mathrm{d}y}{y}
\\
&
\longrightarrow\frac{1}{c_{\psi,\eta}}\int^{\infty}_{0}\int_{\mathbb{R}^{n}}\mathbf{G}(x,y)\mathcal{W}_{\bar{\eta}}\rho(x,y)\frac{\mathrm{d}x\mathrm{d}y}{y}\:,
\end{align*}
as $\lambda\to0^{+}$ (resp. $\lambda\to\infty$), as required.
\end{proof}




We are now in the position to prove Theorem \ref{Tqth1} and Theorem \ref{Tqth2}. The precise relation between quasiasymptotics in $\mathcal{S}'_{0}(\mathbb{R}^{n},E)$ and $\mathcal{S}'(\mathbb{R}^{n},E)$, studied in Appendix A, is crucial for our arguments.

\begin{proof}[Proof of Theorem \ref{Tqth1}] Lemma \ref{Tql2} implies that $\mathbf{f}$ is quasiasymptotically bounded in
 the space $\mathcal{S}'_{0}(\mathbb{R}^{n},E)$. The existence of the $E$-valued polynomial $\mathbf{P}$ and the $\mathbf{c}_{m}$, in case (ii),
 is then a direct consequence of Proposition A.2. The assertion about the degree of
 $\mathbf{P}$ follows from the growth properties of $L$, since slowly varying functions satisfy $L(\lambda)=o(\lambda^{-\alpha}$) as $\lambda\to0^{+}$  (resp. $o(\lambda^{\sigma})$ as $\lambda\to\infty$) for any $\sigma>0$ \cite{bingham,seneta} (in the case (ii) the terms of order $\left|m\right|=p$ can be
 assumed to be absorbed by the $\mathbf{c}_{m}$).  We show (\ref{Tqeq3}) only in the case of asymptotic behavior at infinity; the proof of the case at the origin is completely analogous. Suppose the $E$-valued polynomial has the form
  $$\mathbf{P}(t)=\sum_{\alpha<\left|m\right|\leq d}t^{m}\mathbf{w}_{m}=\sum_{\nu=[\alpha]+1}^{d}\mathbf{Q}_{\nu}(t),$$
   where each $\mathbf{Q}_{\nu}$ is homogeneous of degree $\nu$. Choose $\alpha<\kappa<[\alpha]+1$.
Then, since $L(\lambda)=O(\lambda^{\kappa-\alpha})$ and $\mathbf{c}_{m}(\lambda)=O(\lambda^{\kappa-\alpha})$, we obtain that
$$
\mathbf{f}(\lambda t)=\mathbf{P}(\lambda t)+O(\lambda^{\kappa}) \mbox{ in }\mathcal{S}'(\mathbb{R}^{n},E).
$$
But then, for each fixed $(x,y)\in\mathbb{H}^{n+1}$, the assumption on the size of $M_{\varphi}^{\mathbf{f}}(\lambda x, \lambda y)$  and Lemma \ref{Tql1} imply that
$$
M_{\varphi}^{\mathbf{P}}(\lambda x, \lambda y)
=
\sum_{\alpha<\left|m\right|\leq d} (-\lambda i)^{\left|m\right|}\frac{\partial^{\left|m\right|}}{\partial u^{m}}\left.\left(
e^{i x\cdot u}{\hat{\varphi}}( yu)\right)\right|_{u=0}
\mathbf{w}_{m}=O(\lambda^{\kappa}).
$$
Then, we infer that for each $\alpha<\nu\leq d$ and each $(x,y)\in\mathbb{H}^{n+1}$,
$$
\mathbf{0}=\sum_{\left|m\right|=\nu}\frac{\partial^{\left|m\right|}}{\partial u^{m}}\left.\left(
e^{i x\cdot u}{\hat{\varphi}}( yu)\right)\right|_{u=0}
\mathbf{w}_{m}=i^{\nu}\sum_{q=0}^{\nu}y^{q}(P_{q}\left(-i\partial/\partial x\right)\mathbf{Q}_{\nu})(x).
$$
Thus,
$$
P_{q}\left(\frac{\partial}{\partial x}\right)\mathbf{Q}_{\nu}=\mathbf{0}, \ \ \ \mbox{for all } q,\nu\in\mathbb{N},
$$
as required. It only remains to establish (\ref{Tqeqextra}). Assume that $\alpha=p\in\mathbb{N}$. We keep $0<y<1$. By (\ref{Tqeq1}), (\ref{Tqeq3}), Lemma \ref{Tql1}, and Proposition \ref{wnwp2}, applied to
$$\frac{1}{\lambda^{p}L(\lambda)}\left(\mathbf{f}(\lambda t)-\lambda^{p}\sum
_{\left|m\right|=p}t^{m}\mathbf{c}_{m}(\lambda)-\mathbf{P}(\lambda t)\right),$$
 there are constants $\lambda_{0},C>0$ and
$k,l\in\mathbb{N}$ such that
$$
\left\|\sum_{\left|m\right|=p} (\lambda i)^{p}\frac{\partial^{\left|m\right|}}{\partial u^{m}}\left.\left(
e^{i x\cdot u}{\hat{\varphi}}(yu)\right)\right|_{u=0}
\mathbf{c}_{m}(\lambda)\right\|\leq C\lambda^{p}L(\lambda)\frac{(1+|x|)^{l}}{y^{k}}\:,
$$
for all $(x,y)\in \mathbb{R}^{n}\times(0,1)$ and  $\lambda\leq\lambda_{0}$ (resp. $\lambda_{0}\leq\lambda$), that is,
$$
\left\|\sum_{q=0}^{p}y^{q}P_{q}\left(-i\frac{\partial}{\partial x}\right)\mathbf{C}(x,\lambda)
\right\|\leq CL(\lambda)\frac{(1+|x|)^{l}}{y^{k}}\:.
$$
If we now select $p$ points $0<y_{1}<y_{2}\dots<y_{p}<1$, we
obtain a system of $p+1$ inequalities with Vandermonde matrix
$A=(y^{\nu}_{j})_{j,\nu}$. Multiplying by $A^{-1}$, we convince ourselves of the existence of $C_1$, independent of $x$ and $q$, such that
$$
\left\|P_{q}\left(-i\frac{\partial}{\partial x}\right)\mathbf{C}(x,\lambda)\right\|\leq C_{1}(1+\left|x\right|)^{l}L(\lambda), \ \ \ \mbox{for } \lambda\leq\lambda_{0} \ \  \ (\mbox{resp. } \lambda_{0}\leq\lambda).
$$
This completes the proof.
\end{proof}

We now aboard the proof of Theorem \ref{Tqth2}.

\begin{proof}[Proof of Theorem \ref{Tqth2}] Lemma \ref{Tql2}, under the assumptions (\ref{Tqeq1}) and (\ref{Tqeq4}), implies that $\mathbf{f}$ has quasiasymptotic behavior in the space $\mathcal{S}' _{0}(\mathbb{R}^{n},E)$.
An application of Proposition A.1 yields now the existence of $\mathbf{g}$, $\mathbf{P}$, and the $\mathbf{c}_{m}$ in case (ii). That $\mathbf{P}$ satisfies the equations (\ref{Tqeq3}) actually follows from  Theorem \ref{Tqth1}. The proof of (\ref{Tqeq6}) in case (ii) is similar to that of (\ref{Tqeqextra}) given in the proof of Theorem \ref{Tqth1}, the details are therefore left to the reader.
\end{proof}

\section{Several applications}
\label{wnwap}
In this section we illustrate our ideas with several applications and examples. 
We study in Subsection \ref{wnwPDE} sufficient conditions for stabilization in time of the solution to a class of Cauchy problems. In Subsection \ref{LT} we show how Tauberian theorems for Laplace transforms can be derived from Corollary \ref{Tqc1}. Finally, we prove in Subsection \ref{wnwDS} that for $E$-valued tempered distributions the quasiasymptotics in $\mathcal{D}'(\mathbb{R}^{n},E)$ and $\mathcal{S}'(\mathbb{R}^{n},E)$ are equivalent.

\subsection{Asymptotic stabilization in time for Cauchy problems}
\label{wnwPDE}

 Let  $\Gamma\subseteq\mathbb{R}^{n}$ be a closed convex cone with vertex at the origin. In particular, we may have $\Gamma=\mathbb{R}^{n}$. Let $P$ be a homogeneous polynomial of degree $d$ such 
that $\Re e\:P(iu)<0$ for all $u\in\Gamma\setminus\left\{0\right\}.$ 
We denote \cite{vladimirovbook,vladimirov-d-z1} as $\mathcal{S}'_{\Gamma}\subseteq\mathcal{S}'(\mathbb{R}^{n})$ 
the subspace of distributions supported by $\Gamma$. 

We consider in this subsection the Cauchy problem
\begin{equation}\label{wnweq3.5}
\frac{\partial}{\partial t}U(x,t)=P\left(\frac{\partial}{\partial x}\right)U(x,t), \ \ \ \lim_{t\to0^{+}}U(x,t)=f(x)\ \ \ \mbox{in }\mathcal{S}'(\mathbb{R}^{n}_{x}),
\end{equation}
$$\operatorname*{supp}\hat{f}\subseteq\Gamma, \ \ \ (x,t)\in \mathbb{H}^{n+1},$$
within the class of functions of slow growth over $\mathbb{H}^{n+1}$, that is, solutions $U$ satisfying
$$
\sup_{(x,t)\in \mathbb{H}^{n+1}} \left|U(x,t)\right|\left(t+\frac{1}{t}\right)^{-k_1}\left(1+\left|x\right|\right)^{-k_{1}}<\infty, \mbox{ for some } k_{1},k_{2}\in\mathbb{N}.
$$
One readily verifies that (\ref{wnweq3.5}) has a unique solution. Indeed,
\begin{equation}
\label{eqCP}
U(x,t)=\frac{1}{(2\pi)^{n}}\left\langle \hat{f}(u), e^{ix\cdot u}e^{tP(iu)}\right\rangle=\frac{1}{(2\pi)^{n}}\left\langle \hat{f}(u), 
e^{ix\cdot u}e^{P\left(it^{1/d}u\right)}\right\rangle
\end{equation}
is the sought solution. We shall apply Theorem \ref{Tqth2} to find sufficient geometric conditions for the stabilization in time of the solution to the Cauchy problem (\ref{wnweq3.5}), namely, we study conditions which ensure the existence of a function $T:(A,\infty)\to\mathbb{R}_{+}$ and a constant $\ell\in\mathbb{C}$ such that the following limits exist
\begin{equation}
\label{wnwPDEeq1}
\lim_{t\to\infty}\frac{U(x,t)}{T(t)}=\ell, \ \ \ \mbox{for each }x\in\mathbb{R}^{n}.
\end{equation}

Let $L$ be slowly varying at infinity and $\alpha\in\mathbb{R}$. We shall say that $U$ {stabilizes along $d$-curves} (at infinity), with respect to $\lambda^{\alpha}L(\lambda)$, if the following two conditions hold:
\begin{enumerate}
\item  The following limits exist:
\begin{equation}
\label{wnwPDEeq2}
\lim_{\lambda\to\infty}\frac{U(\lambda x, \lambda^{d} t)}{\lambda^{\alpha}L(\lambda)}=U_{0}(x,t), \ \ \ (x,t)\in\mathbb{H}^{n+1}\cap\mathbb{S}^{n};
\end{equation}
\item There are constants $C\in\mathbb{R}_{+}$ and $l\in\mathbb{N}$ such that
\begin{equation}
\label{wnwPDEeq3}
\left|\frac{U(\lambda x, \lambda^{d} t)}{\lambda^{\alpha}L(\lambda)}\right|\leq \frac{C}{t^{l}}, \ \ \ (x,t)\in\mathbb{H}^{n+1}\cap\mathbb{S}^{n}.
\end{equation}
\end{enumerate}
We have the following result:

\begin{theorem}
\label{wnwth7.1} 
The solution $U$ to the Cauchy problem $(\ref{wnweq3.5})$ stabilizes along $d$-curves, with respect to $\lambda^\alpha L(\lambda)$, if and only if $f$ has quasiasymptotic behavior of degree $\alpha$ at infinity with respect to $L$.
\begin{proof} 
We can find \cite{vladimirov-d-z1} a test
function $\eta\in\mathcal{S}(\mathbb{R}^{n})$ with the property
$\eta(u)=e^{P(iu)},\; u\in\Gamma.$ 
 Setting $\hat{\varphi}=\eta$ and using (\ref{eqCP}), we express $U$ as a regularizing transform,
\begin{equation}
\label{wnweq3.8}U(x,t)=\left\langle f(\xi),\frac{1}{t^{n/d}}\varphi\left(\frac{x-\xi}{t^{1/d}}\right)\right\rangle=M_{\varphi}^{f}(x,y), \ \ \ \mbox{with } y=t^{1/d}.
\end{equation}
Then conditions (\ref{wnwPDEeq2}) and (\ref{wnwPDEeq3}) directly translate into conditions (\ref{Tqeq4}) and (\ref{Tqeq1}), with $M_{x,y}=U_{0}(x,t^{1/d})$ and $k=dl$. Corollary \ref{Tqc2} then yields the desired equivalence.
\end{proof}
\end{theorem}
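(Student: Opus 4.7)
The plan is to express the solution $U(x,t)$ as a regularizing transform $M_{\varphi}^{f}(x,y)$ for a carefully chosen Schwartz kernel $\varphi$, and then apply Corollary~\ref{Tqc2} directly. The key observation is that if one can produce $\varphi\in\mathcal{S}(\mathbb{R}^{n})$ with $\hat{\varphi}(u)=e^{P(iu)}$ on the cone $\Gamma$, then by the $d$-homogeneity of $P$ and the support condition $\operatorname*{supp}\hat{f}\subseteq\Gamma$, the Fourier representation (\ref{eqCP}) gives
\begin{equation*}
U(x,t)=\frac{1}{(2\pi)^{n}}\bigl\langle \hat{f}(u),\,e^{ix\cdot u}\hat{\varphi}(t^{1/d}u)\bigr\rangle=M^{f}_{\varphi}(x,t^{1/d}).
\end{equation*}

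The first step is the construction of $\varphi$. Since $\Re e\,P(iu)<0$ on $\Gamma\setminus\{0\}$, the function $u\mapsto e^{P(iu)}$ is $C^{\infty}$ on $\Gamma$ and, together with all its derivatives, decays faster than any polynomial inside $\Gamma$; hence it admits an extension to a function in $\mathcal{S}(\mathbb{R}^{n})$, as exploited already in \cite{vladimirov-d-z1}. Taking $\hat{\varphi}$ to be such an extension, we have $\hat{\varphi}(0)=1$, so $\int_{\mathbb{R}^{n}}\varphi(t)\,\mathrm{d}t\neq 0$ and, in particular, $\varphi$ is non-degenerate in the sense of Definition~\ref{Tqd1}; thus Corollary~\ref{Tqc2} is applicable. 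With the substitution $y=t^{1/d}$, the stabilization conditions (\ref{wnwPDEeq2}) and (\ref{wnwPDEeq3}) translate into the Tauberian hypotheses (\ref{Tqeq4}) and (\ref{Tqeq1}) with $\mathbf{M}_{x,y}=U_{0}(x,y^{d})$ and $k=dl$; the \emph{Tauberian} direction is then immediate from Corollary~\ref{Tqc2}. Conversely, if $f$ has quasiasymptotic behavior $f(\lambda t)\sim\lambda^{\alpha}L(\lambda)g(t)$ at infinity, Proposition~\ref{Tqp1} provides both the pointwise limit of $\lambda^{-\alpha}L(\lambda)^{-1}M_{\varphi}^{f}(\lambda x,\lambda y)$ and the bound (\ref{TqeqA1}), which, after reverting to $t=y^{d}$, reproduce (\ref{wnwPDEeq2}) and (\ref{wnwPDEeq3}) and thereby yield the \emph{Abelian} direction.

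The only point requiring care is that the parametrizing surfaces $\{|x|^{2}+t^{2}=1\}$ in the definition of stabilization and $\{|x|^{2}+y^{2}=1\}$ in Corollary~\ref{Tqc2} are not the same after the change of variable $y=t^{1/d}$ (they coincide only when $d=1$). However, this mismatch is harmless: both surfaces are transversal to the dilation flow $(x,y)\mapsto(\lambda x,\lambda y)$, and the argument of Lemma~\ref{Tql1} shows that the existence of the limit together with an estimate of the form (\ref{Tqeq1}) on one such transversal automatically propagates to the other (and in fact to every point of $\mathbb{H}^{n+1}$). I expect this reparameterization to be the only delicate step; the rest of the proof is a direct bookkeeping exercise of passing between $t$ and $y=t^{1/d}$ on top of the machinery already developed in Sections~\ref{wnwa}--\ref{proofs}.
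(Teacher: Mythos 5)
Your proof is correct and follows essentially the same route as the paper: construct the Schwartz extension $\varphi$ of the heat-type kernel, recognize $U(x,t)=M_{\varphi}^{f}(x,t^{1/d})$, and invoke Corollary~\ref{Tqc2}. Your explicit observation that $\hat{\varphi}(0)=1$ (so $\int\varphi\neq0$ and the non-degeneracy hypothesis is automatically met) is implicit in the paper but worth spelling out.

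The one place where you go beyond the paper's wording is the observation that the stabilization conditions are stated on the surface $\{|x|^{2}+t^{2}=1\}$, which under $y=t^{1/d}$ becomes $\{|x|^{2}+y^{2d}=1\}$ rather than the unit half-sphere used in (\ref{Tqeq4})--(\ref{Tqeq1}). The paper simply asserts these ``directly translate,'' which is slightly imprecise, and your remedy is the right one. To make the transfer completely explicit: if $(x,y)\in\mathbb{H}^{n+1}\cap\mathbb{S}^{n}$ and one writes $(x,y)=(rx',ry')$ with $|x'|^{2}+(y')^{2d}=1$, then $r\in[2^{-1/2},1]$ uniformly and $y'\geq y$, so the bound $\|M_{\varphi}^{f}(\mu x',\mu y')\|\leq C\mu^{\alpha}L(\mu)/(y')^{dl}$ transfers to $\|M_{\varphi}^{f}(\lambda x,\lambda y)\|\leq C'\lambda^{\alpha}L(\lambda)/y^{dl}$ via Potter's estimate (this is the same mechanism as in Lemma~\ref{Tql1} and in the surface-replacement discussion of Subsection~\ref{wnwOT}), while the pointwise limit propagates because $L(\lambda r)/L(\lambda)\to 1$. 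Your version is therefore a little more careful than the paper's own phrasing, not a different approach.
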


\begin{corollary}
\label{wnwPDEc1} If $U$ stabilizes along $d$-curves, with respect to $\lambda^{\alpha}L(\lambda)$, then $U$ stabilizes in time with respect to $T(t)=t^{\alpha /d}L(t^{1/d})$. Moreover, the limit $(\ref{wnwPDEeq1})$ holds uniformly for $x$ in compacts of $\mathbb{R}^{n}$.
\end{corollary}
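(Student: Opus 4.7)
The plan is to combine Theorem \ref{wnwth7.1} with a uniform-on-compacta version of the Abelian implication. By Theorem \ref{wnwth7.1}, the stabilization of $U$ along $d$-curves is equivalent to $f$ having the quasiasymptotic behavior $f(\lambda t)\sim \lambda^{\alpha}L(\lambda)g(t)$ in $\mathcal{S}'(\mathbb{R}^{n})$ as $\lambda\to\infty$, where $g$ is homogeneous of degree $\alpha$. Using the representation (\ref{wnweq3.8}) and setting $\lambda=t^{1/d}$, one has
\begin{equation*}
\frac{U(x,t)}{T(t)}=\frac{M_{\varphi}^{f}(x,\lambda)}{\lambda^{\alpha}L(\lambda)}.
\end{equation*}

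The next step is to rewrite the right-hand side via the change of variables $\xi=\lambda\tau$ in the pairing defining $M_{\varphi}^{f}$, obtaining
\begin{equation*}
\frac{M_{\varphi}^{f}(x,\lambda)}{\lambda^{\alpha}L(\lambda)}=\left\langle \frac{f(\lambda\tau)}{\lambda^{\alpha}L(\lambda)},\,\varphi\!\left(\tfrac{x}{\lambda}-\tau\right)\right\rangle.
\end{equation*}
Now I would fix a compact $K\subset\mathbb{R}^{n}$ and observe two facts. First, by continuity of translation on $\mathcal{S}(\mathbb{R}^{n})$, the translates $\varphi(\frac{x}{\lambda}-\cdot\,)$ converge to $\check{\varphi}(\cdot)=\varphi(-\cdot)$ in $\mathcal{S}(\mathbb{R}^{n})$ as $\lambda\to\infty$, uniformly for $x\in K$, and the whole family $\{\varphi(\frac{x}{\lambda}-\cdot)\}_{x\in K,\lambda\geq 1}$ is bounded in $\mathcal{S}(\mathbb{R}^{n})$. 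Second, the pointwise weak convergence $\lambda^{-\alpha}L(\lambda)^{-1}f(\lambda\,\cdot)\to g$ in $\mathcal{S}'(\mathbb{R}^{n})$ together with the Banach--Steinhaus theorem (applicable since $\mathcal{S}(\mathbb{R}^{n})$ is barrelled) implies that this family is equicontinuous in $\mathcal{S}'(\mathbb{R}^{n})$, so the convergence is uniform on bounded subsets of $\mathcal{S}(\mathbb{R}^{n})$.

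Combining these two facts, the displayed pairing converges, uniformly for $x\in K$, to
\begin{equation*}
\langle g(\tau),\varphi(-\tau)\rangle=M_{\varphi}^{g}(0,1)=:\ell,
\end{equation*}
which establishes (\ref{wnwPDEeq1}) with $T(t)=t^{\alpha/d}L(t^{1/d})$ and shows that the limit is uniform on compacts. The only nonroutine step is the uniformity claim, which rests precisely on the Banach--Steinhaus equicontinuity of the rescaled family $\lambda^{-\alpha}L(\lambda)^{-1}f(\lambda\,\cdot)$ paired with the joint Schwartz continuity of translation in the test function; given these two ingredients, the proof reduces to the standard ``bilinear continuity'' lemma and no further calculation is required.
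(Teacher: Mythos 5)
Your proof is correct and follows essentially the same route as the paper: invoke Theorem \ref{wnwth7.1} to get the quasiasymptotic behavior of $f$, rewrite $U(x,t)/T(t)$ via (\ref{wnweq3.8}), and pass to the limit using the convergence $\varphi(t^{-1/d}x-\xi)\to\varphi(-\xi)$ in $\mathcal{S}(\mathbb{R}^n)$ locally uniformly in $x$. The paper leaves the Banach--Steinhaus/equicontinuity reasoning implicit, whereas you spell it out; this is just a more explicit rendering of the same argument.
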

\begin{proof} By Theorem \ref{wnwth7.1}, there exists $g\in\mathcal{S}'(\mathbb{R}^{n})$ such that
$$
f(\lambda \xi)\sim \lambda^{\alpha}L(\lambda)g(\xi) \ \ \ \mbox{as }\lambda\to\infty\ \mbox{in }\mathcal{S}'(\mathbb{R}^{n}).
$$
If $K\subset\mathbb{R}^{n}$ is compact, then,
\begin{align*}
\lim_{t\to\infty}\frac{U(x,t)}{T(t)}&=\lim_{t\to\infty}\frac{1}{t^{\alpha /d}L(t^{1/d})}\left\langle f(t^{1/d}\xi),\varphi\left(\frac{x}{t^{1/d}}-\xi\right)\right\rangle
\\
&
=\left\langle g(\xi),\varphi(-\xi)\right\rangle,
\end{align*}
uniformly for $x\in K$ because 
$\varphi\left(t^{-1/d}x- \xi\right)\to\varphi(-\xi)$ in  $\mathcal{S}(\mathbb{R}^{n}),$ as $t\to\infty.$ 
\end{proof}
\begin{example}[The heat equation]
When $\Gamma=\mathbb{R}^{n}$ and $P(\partial/\partial x)=\Delta$, we obtain that stabilization along parabolas (i.e., $d=2$) is sufficient for stabilization in time of the solution to the Cauchy problem for the heat equation. This particular case of Corollary \ref{wnwPDEc1} was studied in \cite{drozhzhinov-z2,drozhzhinov-z3,drozhzhinov-z5}.
\end{example}

\subsection{Tauberian theorems for Laplace transforms}
\label{LT} We now apply our Tauberian theorems to the  Laplace transform. Throughout this subsection we use the following notation. Let $\Gamma$ be a
closed convex acute cone \cite{vladimirovbook,vladimirov-d-z1}
with vertex at the origin. Its conjugate cone is denoted by $\Gamma^{\ast}$, i.e., 
$$
\Gamma^*=\left\{\xi\in\mathbb{R}^{n}:\: \xi\cdot u\geq 0, \forall u\in\Gamma \right\}.
$$ 
The definition of an acute cone tells us that
$\Gamma^{\ast}$ has non-empty interior, set
$C_{\Gamma}=\operatorname*{int} \Gamma^{\ast}$ and
$T^{C_{\Gamma}}=\mathbb{R}^{n}+iC_{\Gamma}.$ We denote by
$\mathcal{S}'_{\Gamma}(E)$ the subspace of $E$-valued tempered
distributions supported by $\Gamma$. Given
$\mathbf{h}\in\mathcal{S}'_{\Gamma}(E)$, its \emph{Laplace
transform} \cite{vladimirovbook} is
$$
\mathcal{L}\left\{\mathbf{h};z\right\}=\left\langle \mathbf{h}(u),e^{iz\cdot u}\right\rangle, \ \ \ z\in T^{C_{\Gamma}};
$$
it is a holomorphic $E$-valued function on the tube domain
$T^{C_{\Gamma}}$. Fix $\omega\in C_{\Gamma}$. We may write
$\mathcal{L}\left\{\mathbf{h};x+i\sigma\omega\right\}$,
$x\in\mathbb{R}^{n}$, $\sigma>0$, as a regularizing transform. In fact,
choose $\eta_{\omega}\in\mathcal{S}(\mathbb{R}^{n})$ such that
$\eta_{\omega}(u)=e^{- \omega\cdot u},$ $u\in\Gamma. $
Then, 
\begin{equation}
\label{wnweq3.9}
\mathcal{L}\left\{\mathbf{h};x+i\sigma\omega\right\}=M_{\varphi_\omega}^\mathbf{f}(x,\sigma), \ \ \ \mbox{with } 
\hat{\varphi}_{\omega}=\eta_{\omega} \mbox{ and } 
\hat{\mathbf{f}}=(2\pi)^{n}\mathbf{h}.
\end{equation}

The following Tauberian theorems for the Laplace transform were originally obtained in \cite{drozhzhinov-z0,vladimirov-d-z1} under the additional assumption that $C_{\Gamma}$ is a regular cone, i.e., its Cauchy-Szeg\"{o} kernel 
$$K_{C_{\Gamma}}(z)=\int_{\Gamma}e^{i z\cdot u}\mathrm{d}u, \ \ \ z\in T^{C_{\Gamma}},$$ is a divisor of the unity in the Vladimirov algebra $H(T^{C_{\Gamma}})$ \cite{vladimirovbook,vladimirov-d-z1}. We will not make use of such a regularity hypothesis over the cone $\Gamma$.

Given $\kappa\geq0$, we denote by $\Omega^{\kappa}\subset \mathbb{H}^{n+1}$ the set
\begin{equation}
\label{wnwLeq1}\Omega^{\kappa}=\left\{(x,\sigma)\in\mathbb{H}^{n+1}:\:\left|x\right|\leq\sigma^{\kappa}\mbox{ and }0<\sigma\leq1 \right\}.
\end{equation}

\begin{theorem}
\label{wnwLth1} Let $\mathbf{h}\in\mathcal{S}'_{\Gamma}(E)$ and let $L$ be slowly varying at infinity. 
Then, $\mathbf{h}$ is quasiasymptotically bounded of degree $\alpha$ at infinity with respect to $L$ if and only if 
there exist numbers $k\in\mathbb{N}$ and $0\leq\kappa<1$ and a vector $\omega\in C_{\Gamma}$ such that
\begin{equation}
\label{wnwLeq2}
\limsup_{\lambda\to 0^{+}}\underset{\sigma\neq 0}{\sup_{(x,\sigma)\in\partial\Omega^{\kappa}}}
\frac{\sigma^{k}\lambda^{n+\alpha}}{L(1/\lambda)}\left\|\mathcal{L}\left\{\mathbf{h};\lambda\left(x+i\sigma\omega\right)\right\}\right\|<\infty.
\end{equation}
\end{theorem}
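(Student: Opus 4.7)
The plan is to reduce Theorem \ref{wnwLth1} to Corollary \ref{Tqc1} via the identity (\ref{wnweq3.9}), which rewrites $\mathcal{L}\{\mathbf{h};x+i\sigma\omega\}$ as the regularizing transform $M^{\mathbf{f}}_{\varphi_\omega}(x,\sigma)$ with $\hat{\varphi}_\omega=\eta_\omega$ and $\hat{\mathbf{f}}=(2\pi)^n\mathbf{h}$. Since $\hat{\varphi}_\omega(0)=1\neq 0$, the kernel $\varphi_\omega$ is automatically non-degenerate and satisfies $\int\varphi_\omega\neq 0$, so Corollary \ref{Tqc1} is applicable to $\mathbf{f}$. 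A standard Fourier dilation computation shows that $\mathbf{h}$ is quasiasymptotically bounded of degree $\alpha$ at infinity with respect to $L$ in $\mathcal{S}'(\mathbb{R}^n,E)$ if and only if $\mathbf{f}$ is quasiasymptotically bounded of degree $-n-\alpha$ at the origin with respect to $\tilde{L}(\mu):=L(1/\mu)$, which is slowly varying at $0$. Thus it suffices to show that (\ref{wnwLeq2}) is equivalent to the hemisphere estimate (\ref{Tqeq1}) for $\mathbf{f}$, with kernel $\varphi_\omega$, degree $-n-\alpha$, and weight $\tilde{L}$.

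Necessity is the easy direction. Proposition \ref{Tqp1}(i) applied to the quasiasymptotically bounded $\mathbf{f}$ yields the full Abelian estimate (\ref{TqeqA1}) for $M^{\mathbf{f}}_{\varphi_\omega}$ uniformly on $\mathbb{H}^{n+1}$. Restricting to $(x,\sigma)\in\partial\Omega^\kappa$ with $\sigma\neq 0$, one has $|x|\leq 1$ and $\sigma\in(0,1]$, so both $\sigma^k(\sigma+1/\sigma)^k$ and $(1+|x|)^l$ are uniformly bounded; (\ref{wnwLeq2}) then follows for every $\omega\in C_\Gamma$ and every $\kappa\in[0,1)$, with $k$ the exponent from (\ref{TqeqA1}).

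For sufficiency, the core of the argument is a scaling trick. Given $(\xi,\tau)$ on the unit hemisphere of $\mathbb{H}^{n+1}$ and $\mu>0$ small, I rewrite $(\mu\xi,\mu\tau)=(\lambda x',\lambda\sigma')$ with $(x',\sigma')\in\partial\Omega^\kappa$ by taking
$$r(\xi,\tau)=\begin{cases}1/\tau, & \tau>|\xi|,\\ (\tau^\kappa/|\xi|)^{1/(1-\kappa)}, & \tau\leq|\xi|,\end{cases}\qquad \lambda=\mu/r;$$
in the first case $r\in[1,\sqrt{2}]$ and $(r\xi,r\tau)$ lies on the top face $\sigma=1$, while in the second it lies on the lateral surface $|x|=\sigma^\kappa$. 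Where $r$ stays bounded below, $\lambda$ is comparable to $\mu$ and the hypothesis (\ref{wnwLeq2}), combined with Potter's bounds for $\tilde{L}$, gives the required bound $\tau^{k'}\|M^{\mathbf{f}}_{\varphi_\omega}(\mu\xi,\mu\tau)\|\leq C\mu^{-n-\alpha}\tilde{L}(\mu)$ for a suitable exponent $k'$.

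The main obstacle is the lateral regime with $\kappa>0$ and $\tau\to 0$: there $r\to 0$, $\lambda=\mu/r$ is no longer small, and the hypothesis cannot be invoked. In this regime the critical set is confined to $\tau\lesssim\mu^{(1-\kappa)/\kappa}$, and I control it by means of the a priori bound $\|M^{\mathbf{f}}_{\varphi_\omega}(\mu\xi,\mu\tau)\|=O((\mu\tau)^{-k_0})$ coming from Proposition \ref{wnwp2}, absorbing the excess by enlarging $k'$: the factor $\tau^{k'-k_0}$ contributes an extra power $\mu^{(k'-k_0)(1-\kappa)/\kappa}$, which together with the slow-variation estimate $\tilde{L}(\mu)^{-1}=o(\mu^{-\varepsilon})$ for every $\varepsilon>0$ dominates the $\mu^{-k_0}$ coming from Proposition \ref{wnwp2}. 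Combining the two regimes establishes (\ref{Tqeq1}), and Corollary \ref{Tqc1} applied to $\mathbf{f}$ completes the proof.
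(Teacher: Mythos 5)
Your proof is correct and follows essentially the same route as the paper's: rewrite $\mathcal{L}\{\mathbf{h};\cdot\}$ as $M^{\mathbf f}_{\varphi_\omega}$ via \eqref{wnweq3.9}, observe that $\hat\varphi_\omega(0)=1$ makes Corollary \ref{Tqc1} applicable, translate to quasiasymptotic boundedness of $\mathbf f$ at the origin of degree $-n-\alpha$ with respect to $\tilde L(\mu)=L(1/\mu)$, and establish the equivalence of \eqref{wnwLeq2} with the hemisphere estimate \eqref{Tqeq1} by a scaling argument combining the hypothesis with Potter's inequality and the a priori bound from Proposition \ref{wnwp2}, with the slow-variation estimate absorbing the polynomial loss where the hypothesis cannot be invoked. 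The only real difference is expository: you split the scaling explicitly into a top-face regime ($\tau>|\xi|$, $r\in[1,\sqrt 2]$) and a lateral regime ($\tau\le|\xi|$, $r=(\tau^\kappa/|\xi|)^{1/(1-\kappa)}$), so the rescaled point always lands exactly on $\partial\Omega^\kappa$, whereas the paper works with a single formula for $r$ and states the rescaled estimate \eqref{wnwLeq4} on all of $\Omega^\kappa$; your two-case decomposition is a cleaner bookkeeping of what is implicitly the same splitting in the paper's proof, which separates into $r\lambda\le\lambda_0$ and $\lambda_0<r\lambda$.
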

\begin{proof} Set $\hat{\mathbf{f}}=(2\pi)^{n}\mathbf{h}$. Clearly, $\mathbf{h}$ is quasiasymptotically bounded of degree $\alpha$ at infinity with respect to $L$ if and only if $\mathbf{f}$ is quasiasymptotically bounded of degree $-\alpha-n$ at the origin with respect to $L(1/\lambda)$. The latter holds, by (\ref{wnweq3.9}) and Corollary \ref{Tqc1}, if and only if there exists $k_{1}\in\mathbb{N}$ such that
\begin{equation}
\label{wnwLeq3}
\limsup_{\lambda\to 0^{+}}\underset{{\vartheta\in[0,\pi/2)}}{\sup_{\left|x\right|^{2}+(\cos \vartheta)^2=1}} 
\frac{\left(\cos\vartheta\right)^{k_1}\lambda^{n+\alpha}}{L(1/\lambda)}\left\|M_{\varphi_{\omega}}^{\mathbf{f}}(\lambda x,\lambda \cos\vartheta)\right\|<\infty.
\end{equation}
Thus, we shall show the equivalence between (\ref{wnwLeq2}) and (\ref{wnwLeq3}). By    Lemma \ref{Tql1}, (\ref{wnwLeq3}) implies (\ref{wnwLeq2}). 
Assume now (\ref{wnwLeq2}), namely, there exist $C_{1}$ and $0<\lambda_{0}<1$ such that
\begin{equation}
\label{wnwLeq4}
\left\|M_{\varphi_\omega}^\mathbf{f}(\lambda x',\lambda \sigma)\right\|< \frac{C_{1}}{\sigma^{k}}\lambda^{-\alpha-n}L\left(1/\lambda\right), 
\ \ \ \lambda\leq\lambda_{0},\ (x',\sigma)\in\Omega^{\kappa}.
\end{equation}
We may assume that $k\geq\alpha+n+1$ and $L$ satisfies
(\ref{wnweql11}) and (\ref{wnweql22}) (the case at infinity). We
keep arbitrary $\lambda<\lambda_{0}$,
$\vartheta\in(0,\pi/2)$ and $x\in\mathbb{R}^{n}$ with
$\left|x\right|^2+(\cos\vartheta)^{2}=1$. Set
$r=\left|x\right|^{\frac{1}{1-\kappa}}/(\cos\vartheta)^{\frac{\kappa}{1-\kappa}},$
$x'=x/r$ and $\sigma=(\cos\vartheta)/r.$ Observe that
$(x',\sigma)\in\partial\Omega^{\kappa}$. Assume first that
$r\lambda\leq\lambda_{0}$, then, in view of
(\ref{wnwLeq4}) and (\ref{wnweql11}),
\begin{align*}
\left\|M_{\varphi_\omega}^\mathbf{f}(\lambda x,\lambda \cos\vartheta)\right\|
&<\frac{C_{1}}{(\cos\vartheta/r)^{k}}(r\lambda)^{-\alpha-n}L\left(1/(r\lambda)\right)
\\
&
\leq4C_{1}C_{2} \lambda^{-\alpha-n}L\left(1/\lambda\right)
(\cos\vartheta)^{ -k-\frac{\kappa}{1-\kappa}(k-\alpha-n+1)  };
\end{align*}
on the other hand, if now $\lambda_{0}<r\lambda$, Proposition \ref{wnwp2}  implies that for some $k_{2}\in\mathbb{N}$, $k_2\leq k$ and $C_{4}>0$,
\begin{align*}
\left\|M_{\varphi_\omega}^\mathbf{f}(\lambda x,\lambda \cos\vartheta)\right\|
&<\frac{C_{4}}{(\lambda\cos\vartheta)^{k_{2}}}
=\frac{C_{4}}{(\cos\vartheta)^{k_2}}\lambda^{-\alpha-n}L\left(1/\lambda\right)\frac{(1/\lambda)^{k_{2}-\alpha-n}}{L(1/\lambda)}
\\
&
<\frac{C_{4}C_{3}}{(\cos\vartheta)^{k_2}}\lambda^{-\alpha-n}L\left(1/\lambda\right)\left(\frac{r}{\lambda_{0}}\right)^{k_{2}+1-\alpha-n}
\\
&
<\frac{C_{4}C_{3}}{\lambda_{0}^{k_{2}+1-\alpha-n}}\lambda^{-\alpha-n}L\left(1/\lambda\right)(\cos\vartheta)^{ -k_2-\frac{\kappa}{1-\kappa}(k_2-\alpha-n+1)  },
\end{align*}
where we have used (\ref{wnweql22}). Therefore, (\ref{wnwLeq3}) is satisfied with $k_{1}\geq k_2+\kappa(k_2-\alpha-n+1)/(1-\kappa)$.
\end{proof}

We obtain as a corollary the so called general Tauberian theorem for Laplace transforms \cite[p. 84]{vladimirov-d-z1}.
\begin{corollary}\label{wnwLc1}
Let $\mathbf{h}\in\mathcal{S}'_{\Gamma}(E)$ and let $L$ be slowly varying at infinity. Then, the estimate $(\ref{wnwLeq2})$, 
for some $k\in\mathbb{N}$, $0\leq\kappa<1$, and $\omega\in C_{\Gamma}$, and the existence of a solid cone $C'\subset C_{\Gamma}$ 
(i.e., $\operatorname*{int}C'\neq \emptyset$) such that
\begin{equation}
\label{wnwLeq5}
\lim_{\lambda\to0^{+}} \frac{\lambda^{\alpha+n}}{L(1/\lambda)}\mathcal{L}\left\{\mathbf{h};i\lambda \xi\right\}=\mathbf{G}(i\xi), \mbox{ in } E, \ \ \ \mbox{for each }\xi\in C',
\end{equation}
are necessary and sufficient for $\mathbf{h}$ to have quasiasymptotic behavior at infinity of degree $\alpha$, i.e.,
$$
\mathbf{h}(\lambda u)\sim \lambda^{\alpha}L(\lambda)\mathbf{g}(u)\ \ \ \mbox{ in } \mathcal{S'}(\mathbb{R}^{n},E)\ \ \ \mbox{as }\lambda\to\infty, \ \ \  \mbox{for some }\mathbf{g}\in\mathcal{S}'_{\Gamma}(E).
$$
In such a case, $\mathbf{G}(z)=\mathcal{L}\left\{\mathbf{g};z\right\}$, $z\in T^{C_{\Gamma}}$.
\end{corollary}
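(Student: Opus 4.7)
Necessity is essentially Abelian: Theorem \ref{wnwLth1} supplies (\ref{wnwLeq2}), while for (\ref{wnwLeq5}) one pairs the quasiasymptotic behavior $\mathbf{h}(\lambda u)\sim\lambda^{\alpha}L(\lambda)\mathbf{g}(u)$ with a Schwartz extension $\eta_{\xi}$ of $e^{-\xi\cdot u}|_{\Gamma}$. Using $\operatorname*{supp}\mathbf{h}\subseteq\Gamma$, the pairing $\langle\mathbf{h}(\lambda u),\eta_{\xi}(u)\rangle$ equals $\lambda^{-n}\mathcal{L}\{\mathbf{h};i\xi/\lambda\}$, and the substitution $\mu=1/\lambda$ produces (\ref{wnwLeq5}) with $\mathbf{G}(i\xi)=\mathcal{L}\{\mathbf{g};i\xi\}$. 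The identity $\mathbf{G}\equiv\mathcal{L}\{\mathbf{g};\cdot\}$ on all of $T^{C_{\Gamma}}$ then follows by uniqueness of $E$-valued holomorphic extensions from the totally real submanifold $iC'$.

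For the sufficiency my plan is to reduce to Corollary \ref{Tqc2} applied to $\mathbf{f}:=\mathcal{F}^{-1}[(2\pi)^{n}\mathbf{h}]$ with a carefully chosen kernel. Hypothesis (\ref{wnwLeq2}) together with Theorem \ref{wnwLth1} shows that $\mathbf{h}$ is quasiasymptotically bounded of degree $\alpha$ at infinity with respect to $L$; equivalently, $\mathbf{f}$ is quasiasymptotically bounded of degree $-\alpha-n$ at the origin with respect to $L(1/\lambda)$. Fixing $\omega_{0}\in C'$, I choose a non-degenerate $\varphi_{\omega_{0}}\in\mathcal{S}(\mathbb{R}^{n})$ with $\hat{\varphi}_{\omega_{0}}=\eta_{\omega_{0}}$, so that $\int\varphi_{\omega_{0}}=\hat{\varphi}_{\omega_{0}}(0)=1\neq0$ and, by (\ref{wnweq3.9}), $M^{\mathbf{f}}_{\varphi_{\omega_{0}}}(x,\sigma)=\mathcal{L}\{\mathbf{h};x+i\sigma\omega_{0}\}$. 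The boundedness estimate (\ref{Tqeq1}) required by Corollary \ref{Tqc2} is furnished by (\ref{wnwLeq2}) exactly as in the proof of Theorem \ref{wnwLth1} (via Lemma \ref{Tql1}). The whole matter is thus reduced to checking the pointwise limit (\ref{Tqeq4}) for the single kernel $\varphi_{\omega_{0}}$ at every $(x,\sigma)\in\mathbb{H}^{n+1}\cap\mathbb{S}^{n}$.

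Here lies the main obstacle: hypothesis (\ref{wnwLeq5}) delivers the limit only on $iC'\subset T^{C_{\Gamma}}$, a totally real real-$n$-dimensional submanifold of the $(2n)$-real-dimensional tube, whereas (\ref{Tqeq4}) asks for convergence throughout. My plan is a vector-valued Vitali argument for the $E$-valued holomorphic family
\[\mathbf{F}_{\lambda}(z):=\frac{\lambda^{\alpha+n}}{L(1/\lambda)}\,\mathcal{L}\{\mathbf{h};\lambda z\},\qquad z\in T^{C_{\Gamma}}.\]
Propositions \ref{Tqp1}(i) and \ref{wnwp2}, applied with Abelian kernels $\varphi_{\omega}$ for varying $\omega\in C_{\Gamma}$, yield uniform bounds for $\|\mathbf{F}_{\lambda}\|_{E}$ on every compact subset of $T^{C_{\Gamma}}$. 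Cauchy's integral formula then bounds all holomorphic derivatives uniformly, so for $y_{1}\in\operatorname*{int}C'$ fixed, the maps $y\mapsto\mathbf{F}_{\lambda}(iy)$ and each of their partial derivatives form equicontinuous families on compact neighborhoods of $y_{1}$ in $C'$. Combined with the pointwise convergence in $E$-norm along $iC'$ supplied by (\ref{wnwLeq5}), the Banach-valued Arzel\`a--Ascoli argument (pointwise convergence plus equicontinuity) gives norm convergence of every real Taylor coefficient of $y\mapsto\mathbf{F}_{\lambda}(iy)$ at $y_{1}$, hence of every holomorphic Taylor coefficient of $\mathbf{F}_{\lambda}$ at $iy_{1}$. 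The uniform Cauchy bounds upgrade this to local uniform norm convergence of $\mathbf{F}_{\lambda}$ on a polydisc about $iy_{1}$, which then propagates to all of $T^{C_{\Gamma}}$ by standard analytic continuation along polygonal paths in the connected tube.

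Once (\ref{Tqeq4}) is secured, Corollary \ref{Tqc2} yields $\tilde{\mathbf{g}}\in\mathcal{S}'(\mathbb{R}^{n},E)$ with $\mathbf{f}(\lambda t)\sim\lambda^{-\alpha-n}L(1/\lambda)\tilde{\mathbf{g}}(t)$ in $\mathcal{S}'(\mathbb{R}^{n},E)$ as $\lambda\to 0^{+}$. Taking inverse Fourier transforms delivers $\mathbf{h}(\lambda u)\sim\lambda^{\alpha}L(\lambda)\mathbf{g}(u)$ as $\lambda\to\infty$, with $(2\pi)^{n}\mathbf{g}=\hat{\tilde{\mathbf{g}}}$; preservation of the closed support $\Gamma$ under weak-$*$ limits places $\mathbf{g}$ in $\mathcal{S}'_{\Gamma}(E)$, and the already-proved necessity applied to $\mathbf{g}$ identifies $\mathbf{G}(z)=\mathcal{L}\{\mathbf{g};z\}$ on $T^{C_{\Gamma}}$.
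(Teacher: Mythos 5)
Your proposal is correct, but it takes a noticeably longer and genuinely different route from the paper's own proof, which is a one-paragraph argument. The paper identifies $\mathcal{S}'_{\Gamma}(E)\cong L_{b}(\mathcal{S}(\Gamma),E)$, observes that the linear span of the exponentials $\left\{e^{i\xi\cdot u}:\xi\in C'\right\}$ is dense in $\mathcal{S}(\Gamma)$ (injectivity of the Laplace transform plus uniqueness of holomorphic functions), notes that hypothesis (\ref{wnwLeq5}) is exactly convergence of the dilated net $(\lambda^{-\alpha}/L(\lambda))\mathbf{h}(\lambda\,\cdot)$ on this dense subspace, and then finishes with the equicontinuity furnished by Theorem \ref{wnwLth1} and the Banach--Steinhaus theorem. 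No return trip to Corollary \ref{Tqc2} is made, and the Laplace transform is never holomorphically extended off $iC'$. Your plan instead maps everything back into the regularizing-transform framework and then must upgrade pointwise convergence on the totally real slice $iC'$ to convergence on the full tube via a Banach-space-valued, several-complex-variables Vitali argument. This can be made to work, but it is substantially heavier: the vector-valued Vitali theorem needs a divided-difference or Taylor-coefficient argument rather than a plain Arzel\`a--Ascoli step (equicontinuity plus pointwise convergence of $\mathbf{F}_{\lambda}$ does not by itself give pointwise convergence of the derivatives of $\mathbf{F}_{\lambda}$, which your sketch silently assumes), and the constants in the local bounds drawn from Propositions \ref{Tqp1} and \ref{wnwp2} must be checked to be locally uniform in the direction $\omega\in C_{\Gamma}\cap\mathbb{S}^{n-1}$. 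What your approach buys is a direct reduction to the already-proved Tauberian Corollary \ref{Tqc2}; what the paper's approach buys is brevity, since (\ref{wnwLeq5}) already supplies convergence on a dense set of genuine test functions in $\mathcal{S}(\Gamma)$, so Banach--Steinhaus closes the argument without any further complex analysis.
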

\begin{proof} Recall \cite{vladimirovbook} that $\mathcal{S}'_{\Gamma}(E)$ is canonically isomorphic to $L_{b}(\mathcal{S}(\Gamma),E)$. By the injectivity of the Laplace transform and the uniqueness property of holomorphic functions, the linear span of $\left\{e^{i \xi\cdot u}: \xi\in C'\right\}$ is dense in $\mathcal{S}(\Gamma)$; observe that (\ref{wnwLeq5}) gives precisely convergence of $(\lambda^{-\alpha}/L(\lambda))\mathbf{h}(\lambda\: \cdot)$
over such a dense subset. To conclude the proof, it suffices to apply Theorem \ref{wnwLth1} and the Banach-Steinhaus theorem.
\end{proof}
\begin{example}[Littlewood's Tauberian theorem]\label{wnwapexl} The classical Tauberian theorem of Littlewood \cite{hardy,korevaarbook,littlewood} states that if

\begin{equation} \label{wnwLeq6} \lim_{\varepsilon \to 0^+} \sum_{n = 0}^{\infty}
c_n e^{-\varepsilon n} = \beta
\end{equation}
and if the Tauberian hypothesis $ c_n = O(1/n) $ is satisfied,
then the numerical series is convergent, i.e.,
$\sum_{n = 0}^{\infty}
c_n = \beta.$

We give a quick proof of this theorem based on Corollary \ref{wnwLc1} and a result from \cite{vindas-estrada2}. We first show that $h(u)=\sum_{n=0}^{\infty}c_{n}\delta(u-n)$ has the quasiasymptotic behavior
\begin{equation}\label{wnwLeq7}
h(\lambda u)=\sum_{n=0}^{\infty}c_{n}\delta(\lambda u-n)\sim \beta \frac{\delta(u)}{\lambda} \ \ \ \mbox{as }\lambda\to\infty\ \mbox{in }\mathcal{S}'(\mathbb{R}_{u}).
\end{equation}
Observe that (\ref{wnwLeq5}) is an immediate consequence of (\ref{wnwLeq6}) (here $n=1$, $\alpha=-1$, $L\equiv1$). We verify (\ref{wnwLeq2}) with $\kappa=0$, actually, on the rectangle $\Omega^{0}=[-1,1]\times(0,1]$. Indeed, (\ref{wnwLeq6}) and the Tauberian hypothesis imply that for suitable 
constants $C_{1},C_{2},C_{3},C_{4}>0$, independent of $(x,\sigma)\in\Omega^{0}$,
\begin{align*}
\left|\mathcal{L}\left\{h;\lambda^{-1}(x+i\sigma)\right\}\right|& =\left|\sum_{n=0}^{\infty}c_{n}e^{-\lambda^{-1} \sigma n}e^{i\lambda^{-1} x n}\right|
\\
&
\leq 
C_1+C_2\sum _{n=1}^{\infty} \frac{e^{-\lambda^{-1} \sigma n}}{n}\left|e^{i\lambda^{-1} x n}-1\right|
\\
& <C_{1}+C_{3} \lambda \sum _{n=1}^{\infty}e^{-\lambda^{-1} \sigma n}
<\frac{C_{4}}{\sigma}, \ (x,\sigma)\in\Omega^{0},\ \lambda\geq1.
\end{align*}
Consequently, Corollary \ref{wnwLc1} yields (\ref{wnwLeq7}). Finally, it is well known that (\ref{wnwLeq7}) 
and $c_{n}=O(1/n)$ imply the convergence of the series; in fact, this is true under more general Tauberian 
hypotheses (cf. \cite[Sec. 3]{vindas-estrada2}). We reproduce here a proof of the convergence conclusion for the sake of completeness. Let $\sigma>1$ be arbitrary. Choose 
$\rho\in\mathcal{D}(\mathbb{R})$ such that $0\leq\rho\leq1$, $\rho(u)=1$ for $u\in[0,1]$, and 
$\operatorname*{supp}\rho\subset[-1,\sigma]$, then, evaluation of (\ref{wnwLeq7}) at $\rho$ gives, for some constant $C_{5}$,
\begin{align*}\limsup_{\lambda\to\infty}\left|\sum_{0\leq n\leq\lambda}c_{n}-\beta\right|& \leq \limsup_{\lambda\to\infty}\left|\sum_{\lambda\leq n}c_{n}\rho\left(\frac{n}{\lambda}\right)\right|
 < C_{5}\limsup_{\lambda\to\infty}\sum_{1<\frac{n}{\lambda}<\sigma}\frac{1}{n}\:\rho\left(\frac{n}{\lambda}\right)
 \\
 &
=C_{5}\int_{1}^{\sigma}\frac{\rho(x)}{x}\:\mathrm{d}x<C_{5}(\sigma-1),
\end{align*}
and so, taking $\sigma\to1^{+}$, we conclude $\sum_{n=0}^{\infty}c_{n}=\beta$.
\end{example}
\begin{remark} We refer to the monograph \cite{vladimirov-d-z1} (and references therein) for the numerous applications of Corollary \ref{wnwLc1} in mathematical physics, especially in quantum field theory (see also \cite{vladimirov-z1,vladimirov-z2}). Probabilistic applications can be found in \cite{yakimiv}. Corollary \ref{wnwLc1} can also be  used to easily recover Vladimirov multidimensional generalization \cite{vladimirov1} of the Hardy-Littlewood-Karamata Tauberian theorem (cf. \cite{drozhzhinov-z0,vladimirov-d-z1}). In connection with Example \ref{wnwapexl}, see \cite{estrada-vindasFourierT,estrada-vindasTohoku,estrada-vindasL,vindas-estrada2} for distributional methods in Tauberian theorems for power and Dirichlet series; see \cite{s-vindas,vindas-estradaP} for applications in prime number theory.
\end{remark}

\subsection{Relation between quasiasymptotics in the spaces $\mathcal{D}'(\mathbb{R}^{n},E)$ and $\mathcal{S}'(\mathbb{R}^{n},E)$} \label{wnwDS}

If an $E$-valued tempered distribution has quasiasymptotic behavior in the space $\mathcal{S}'(\mathbb{R}^{n},E)$ then, clearly, it has the same quasiasymptotic behavior in $\mathcal{D}'(\mathbb{R}^{n},E)$. The converse is also well known in the case of scalar-valued distributions, but the truth of this result is less obvious. There have been several proofs of such a converse result and, remarkably, none of them is simple (cf. \cite{meyer,pilipovic3,vindas-estrada6,vindas-pilipovic1} and especially \cite[Lem. 6]{zavialov88} for the general case). We provide a new proof of this fact, which will actually be derived as an easy consequence of the results from Section \ref{wnwtt}. We begin with quasiasymptotic boundedness.

\begin{proposition}
\label{wnwDSp1} Let $\mathbf{f}\in\mathcal{S}'(\mathbb{R}^{n},E)$. If $\mathbf{f}$ is quasiasymptotically bounded of degree $\alpha$ at the origin (resp. at infinity) with respect to $L$ in the space $\mathcal{D}'(\mathbb{R}^{n},E)$, so is $\mathbf{f}$ in the space $\mathbf{f}\in\mathcal{S}'(\mathbb{R}^{n},E)$.
\end{proposition}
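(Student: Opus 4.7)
The strategy is to reduce the claim to Corollary \ref{Tqc1}. Fix any $\varphi\in\mathcal{D}(\mathbb{R}^{n})$ with $\int_{\mathbb{R}^{n}}\varphi(t)\mathrm{d}t\neq 0$ and $\operatorname*{supp}\varphi\subset B(0,r)$; since $\hat{\varphi}$ is entire (Paley--Wiener) with $\hat{\varphi}(0)\neq 0$, each analytic one-variable map $R_{\omega}(r)=\hat{\varphi}(r\omega)$ is nonzero at $r=0$ and hence not identically zero, so $\varphi$ is non-degenerate in the sense of Definition \ref{Tqd1}. It therefore suffices to show that the family $M^{\mathbf{f}}_{\varphi}(\lambda x,\lambda y)$ satisfies the estimate (\ref{Tqeq1}) for some $k\in\mathbb{N}$, since Corollary \ref{Tqc1} then delivers the desired quasiasymptotic boundedness in $\mathcal{S}'(\mathbb{R}^{n},E)$.

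The key step is to extract an equicontinuity statement on $\mathcal{D}$ from the $\mathcal{D}'$ quasiasymptotic boundedness via Banach--Steinhaus. Consider the family
\[
\mathfrak{B}_{0}=\left\{\frac{1}{\lambda^{\alpha}L(\lambda)}\,\mathbf{f}(\lambda\,\cdot\,):\lambda\in(0,\lambda_{0}]\right\}\subset\mathcal{D}'(\mathbb{R}^{n},E),
\]
(resp.\ $\lambda\in[\lambda_{0},\infty)$). For each fixed $\phi\in\mathcal{D}(\mathbb{R}^{n})$, the scalar assumption gives boundedness of $\|\lambda^{-\alpha}L(\lambda)^{-1}\langle\mathbf{f}(\lambda\cdot),\phi\rangle\|$ as $\lambda\to 0^{+}$ (resp.\ $\lambda\to\infty$); combined with the continuity of $\lambda\mapsto\mathbf{f}(\lambda\cdot)$ on any compact sub-interval of $(0,\infty)$, this yields boundedness on the whole range, i.e.\ $\mathfrak{B}_{0}$ is pointwise bounded. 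Since $\mathcal{D}(\mathbb{R}^{n})$ is a barreled (LF-)space, Banach--Steinhaus gives equicontinuity of $\mathfrak{B}_{0}$: for the compact set $K=\overline{B(0,1+r)}$ there are $N\in\mathbb{N}$ and $C>0$ such that
\[
\|\langle\mathbf{f}(\lambda\,\cdot\,),\phi\rangle\|\leq C\lambda^{\alpha}L(\lambda)\sup_{|m|\leq N,\,t\in K}|\phi^{(m)}(t)|,
\]
for every $\phi\in\mathcal{D}_{K}$ and every $\lambda$ in the relevant range.

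The conclusion is then immediate. For $(x,y)\in\mathbb{H}^{n+1}\cap\mathbb{S}^{n}$ one has $y\leq 1$ and the test function $s\mapsto\varphi_{y}(x-s)$ is supported in $B(x,ry)\subset K$; moreover $|\partial^{m}\varphi_{y}(x-t)|\leq y^{-n-|m|}\|\partial^{m}\varphi\|_{\infty}$. Applying the equicontinuity bound to $\phi(s)=\varphi_{y}(x-s)$ yields, for some $C'>0$ depending only on $\varphi$,
\[
y^{n+N}\|M^{\mathbf{f}}_{\varphi}(\lambda x,\lambda y)\|\leq C'\lambda^{\alpha}L(\lambda),
\]
uniformly on $\mathbb{H}^{n+1}\cap\mathbb{S}^{n}$ as $\lambda\to 0^{+}$ (resp.\ $\lambda\to\infty$). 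This is precisely the Tauberian estimate (\ref{Tqeq1}) with $k=n+N$, and Corollary \ref{Tqc1} concludes the proof.

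The main (and essentially only) obstacle is the Banach--Steinhaus step, which requires checking that the $\limsup$-type hypothesis upgrades to uniform pointwise boundedness on a single interval; once that is done, the scaling properties of $\varphi_{y}$ and the compact support of $\varphi$ give the rest for free.
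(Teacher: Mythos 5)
Your proof is correct and essentially the same as the paper's: both extract a local equicontinuity estimate from the $\mathcal{D}'$ hypothesis via the Banach--Steinhaus theorem, test it against the scaled kernel $\varphi_{y}(x-\cdot)$ to obtain the Tauberian estimate (\ref{Tqeq1}), and conclude with Corollary \ref{Tqc1}. Your write-up is just a bit more explicit about the pointwise-boundedness step and the (automatic) non-degeneracy of $\varphi$.
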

\begin{proof} We will show both assertions at 0 and $\infty$ at the same time. The Banach-Steinhaus theorem implies the existence of $\nu\in\mathbb{N}$, $C>0$, and $\lambda_{0}>0$ such that
$$
\left|\left\langle \mathbf{f}(\lambda t),\rho(t)\right\rangle\right|\leq C \lambda^{\alpha}L(\lambda)\sup_{\left|t\right|\leq 1,\: \left|m\right|\leq \nu}\left|\rho^{(m)}(t)\right|, \ \ \ \mbox{for all }\rho\in\mathcal{D}(B(0,3))
$$
and all $0<\lambda<\lambda_{0}$ (resp.  $\lambda_{0}<\lambda$), where $B(0,3)$ is the ball of radius 3. Let now $\varphi\in\mathcal{D}(B(0,1))$ be such that $\int_{\mathbb{R}^{n}}\varphi(t)\mathrm{d}t\neq 0$. If we take $\rho(t)=y^{-n}\varphi(y^{-1}(x-t))$ in the above estimate, where $0<y<1$ and $\left|x\right|\leq1$, we then obtain at once that (\ref{Tqeq1}) is satisfied with $k=\nu+n$, and consequently Corollary
\ref{Tqc1} implies the result.
\end{proof}
Proposition \ref{wnwDSp1}, the Banach-Steinhaus theorem, and the density of $\mathcal{D}(\mathbb{R}^{n})$ in $\mathcal{S}(\mathbb{R}^{n})$ immediately yield what we wanted:
\begin{corollary}
\label{wnwDSc1}
If $\mathbf{f}\in\mathcal{S}'(\mathbb{R}^{n},E)$ has  quasiasymptotic behavior in the space $\mathcal{D}'(\mathbb{R}^{n},E)$, so does $\mathbf{f}$ have the same quasiasymptotic behavior in the space $\mathcal{S}'(\mathbb{R}^{n},E)$.
\end{corollary}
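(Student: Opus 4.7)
The plan is to deduce Corollary \ref{wnwDSc1} as an essentially immediate consequence of Proposition \ref{wnwDSp1} via an equicontinuity/density argument. Suppose $\mathbf{f}(\lambda t)\sim \lambda^{\alpha}L(\lambda)\mathbf{g}(t)$ in $\mathcal{D}'(\mathbb{R}^{n},E)$ as $\lambda\to 0^{+}$ (the case at infinity is identical). In particular, $\mathbf{f}$ is quasiasymptotically bounded of degree $\alpha$ at the origin in $\mathcal{D}'(\mathbb{R}^{n},E)$, so Proposition \ref{wnwDSp1} upgrades this boundedness to the space $\mathcal{S}'(\mathbb{R}^{n},E)$. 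This is the only nontrivial input; everything else is soft functional analysis.

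Concretely, I would record that the family
\[
\mathfrak{B}=\left\{\frac{1}{\lambda^{\alpha}L(\lambda)}\,\mathbf{f}(\lambda\:\cdot\:):0<\lambda<\lambda_{0}\right\}
\]
is a bounded subset of $\mathcal{S}'(\mathbb{R}^{n},E)=L_{b}(\mathcal{S}(\mathbb{R}^{n}),E)$. Since $\mathcal{S}(\mathbb{R}^{n})$ is a Fr\'echet space and $E$ is Banach, the Banach–Steinhaus theorem applies and $\mathfrak{B}$ is equicontinuous as a family of continuous linear maps $\mathcal{S}(\mathbb{R}^{n})\to E$.

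Now, by the assumed quasiasymptotic behavior in $\mathcal{D}'(\mathbb{R}^{n},E)$, for every $\varphi\in\mathcal{D}(\mathbb{R}^{n})$ the limit
\[
\lim_{\lambda\to 0^{+}}\frac{1}{\lambda^{\alpha}L(\lambda)}\left\langle\mathbf{f}(\lambda t),\varphi(t)\right\rangle=\left\langle\mathbf{g}(t),\varphi(t)\right\rangle
\]
exists in $E$. Since $\mathcal{D}(\mathbb{R}^{n})$ is dense in $\mathcal{S}(\mathbb{R}^{n})$, the standard criterion for equicontinuous families (pointwise convergence on a dense subset together with equicontinuity implies pointwise convergence on the whole space to a continuous linear limit) yields that the limit exists for every $\phi\in\mathcal{S}(\mathbb{R}^{n})$ and defines a continuous linear map $\tilde{\mathbf{g}}:\mathcal{S}(\mathbb{R}^{n})\to E$ that extends $\mathbf{g}$. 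Therefore $\tilde{\mathbf{g}}\in\mathcal{S}'(\mathbb{R}^{n},E)$ and $\mathbf{f}(\lambda t)\sim\lambda^{\alpha}L(\lambda)\tilde{\mathbf{g}}(t)$ in $\mathcal{S}'(\mathbb{R}^{n},E)$, giving the claim.

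There is essentially no obstacle: the genuinely nontrivial part has been absorbed into Proposition \ref{wnwDSp1}, whose proof in turn rests on Corollary \ref{Tqc1}. The only verification needed in the argument sketched above is the standard fact that an equicontinuous family of operators into a Banach space which converges pointwise on a dense subspace converges pointwise everywhere, which is routine and follows from an $\varepsilon/3$-argument using the uniform estimate furnished by equicontinuity.
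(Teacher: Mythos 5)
Your argument is exactly the paper's own proof, spelled out in full detail: the paper states that Corollary \ref{wnwDSc1} follows from Proposition \ref{wnwDSp1}, the Banach--Steinhaus theorem, and the density of $\mathcal{D}(\mathbb{R}^{n})$ in $\mathcal{S}(\mathbb{R}^{n})$, which is precisely the equicontinuity-plus-dense-subspace argument you wrote out. There is no gap and no divergence from the paper's route.
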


Corollary \ref{wnwDSc1} tells us then that the quasiasymptotics at the origin in $\mathcal{S}'(\mathbb{R}^{n},E)$ is a local property. Indeed, if $\mathbf{f_{1}}=\mathbf{f_2}$ in near 0, then we easily deduce that they have exactly the same quasiasymptotic properties at the origin in $\mathcal{S}'(\mathbb{R}^{n},E)$.

\section{Further extensions}\label{wnwe}
We indicate in this section some useful extensions and variants of the Tauberian results from Section \ref{wnwtt}.

\subsection{Other Tauberian Conditions} \label{wnwOT}
The Tauberian condition (\ref{Tqeq1}), occurring in Theorems
\ref{Tqth1} and \ref{Tqth2}, can be replaced by an estimate of the form (\ref{wnwLeq2}), that is, one may use the boundary of some set $\Omega^{\kappa}$, $0\leq\kappa<1$ (cf. 
(\ref{wnwLeq1})), instead of the upper half sphere $\mathbb{H}^{n+1}\cap\mathbb{S}^{n}$. Specifically, the same argument given in proof of Theorem \ref{wnwLth1} applies to show that (\ref{Tqeq1}) (and hence (\ref{TqeqA1})) is equivalent to
\begin{equation}
\label{TqTauberianregion}
\limsup_{\lambda\rightarrow0^+}\sup_{(x,y)\in\partial\Omega^{\kappa}, \: y>0}\frac{y^k}{\lambda^{\alpha}L(\lambda)}\left\|M^{\mathbf{f}}_{\varphi}\left(\lambda
x,\lambda y\right)\right\|<\infty\ \ \ \left(\mbox{resp. } \limsup_{\lambda\rightarrow\infty}\right)
\end{equation}
for some $0\leq\kappa<1$ and $k\in\mathbb{N}$ (the $k$ may be different numbers).

\subsection{Distributions with values in regular (LB) spaces}\label{wnwDFS}
We now explain that all the results from Sections \ref{wnwa} and \ref{wnwtt} hold if $E$ is a more general locally convex space. We assume below that $E$ is the inductive
limit of an increasing sequence of Banach spaces $\left\{E_n\right\}_{n\in\mathbb N}$ (an (LB) space), that is,
$
E=\mbox{ind}\lim_{n\to\infty}(E_n,||\cdot||_n) \:,
$ where $E_{1}\subset
E_{2}\subset\dots$ and each injection $E_{n}\to E_{n+1}$ is continuous. Particular examples are
$E=\mathcal{S}'(\mathbb{R}^{n}),\mathcal{S}'_{0}(\mathbb{R}^{n}),\mathcal{D}'(Y)$,
where $Y$ is a compact manifold, among many other important spaces
arising in applications.

We start by assuming that $E$ is \emph{regular}, namely, for any bounded set $\mathfrak{B}$ there exists
$n_0\in\mathbb N$ such that $\mathfrak{B}$ is bounded in $E_{n_{0}}$. One can find in \cite[p. 33]{kom} an overview of several conditions which ensure regularity of inductive limits. Under this assumption, our Abelian and Tauberian theorems from Sections \ref{wnwa}--\ref{wnwtt} for $E$-valued distributions are valid if we replace the norm estimates by memberships in bounded subsets of $E$. For instance, a condition such as (\ref{Tqeq1}) should be replaced by one of the form: There exist $k\in\mathbb{N}$, $\lambda_{0}>0$, and a bounded set $\mathfrak{B}\subset E$ such that for $0<\lambda\leq\lambda_0$ (resp. $\lambda\geq\lambda_0$)
\begin{equation}
\label{wnwDFSeq1}
\frac{y^{k}}{\lambda^{\alpha}L(\lambda)}M^{\mathbf{f}}_{\varphi}\left(\lambda
x,\lambda y\right)\in\mathfrak{B},  \ \ \ \left|x\right|^{2}+y^{2}=1\ ;
\end{equation}
and similarly for (\ref{TqeqA1}) and (\ref{TqTauberianregion}). 

As already observed, (\ref{wnwDFSeq1}) is equivalent to an estimate of the form (\ref{Tqeq1}) in some norm $\left\|\:\cdot\:\right\|_{n_0}$, but the determination of $n_{0}$ could be extremely hard to verify in applications and thus such a Tauberian condition would have no value in some concrete situations. It is therefore desirable to have more realistic Tauberian conditions.  We can achieve this if we
use the Mackey theorem \cite[Thm. 36.2]{treves}, because the condition (\ref{wnwDFSeq1}) is then equivalent to the following one: There exists $k\in\mathbb{N}$ such that for each $e^{\ast}\in E'$
\begin{equation}
\label{wnwDFSeq2}
\limsup_{\lambda\rightarrow0^+}\sup_{\left|x\right|^2+y^2=1}\frac{y^k}{\lambda^{\alpha}L(\lambda)}\left|\left\langle e^{\ast},
M^{\mathbf{f}}_{\varphi}\left(\lambda x,\lambda y\right)\right\rangle\right|<\infty \ \ \ \left(\mbox{resp. } \limsup_{\lambda\rightarrow\infty}\right) .
\end{equation}
Therefore, a version of Theorem \ref{Tqth1} with the Tauberian condition (\ref{wnwDFSeq2}) is valid for distributions with values in regular (LB) spaces. Let us point out that the (DFS$^{\ast}$) spaces are of this kind \cite{kom}.

If we now suppose that $E$ is Montel, the limit condition (\ref{Tqeq4}) can be replaced by the equivalent one: The existence of the limits
\begin{equation}
\label{wnwDFSeq3}
\lim_{\lambda\to0^{+}} \frac{1}{\lambda^{\alpha} L(\lambda)}\left\langle e^{\ast},M^{\mathbf{f}}_{\varphi}(\lambda x,\lambda y)\right\rangle\in \mathbb{C} \ \ \ \left(\mbox{resp. } \lim_{\lambda\rightarrow\infty}\right) ,
\end{equation}
for each $e^{\ast}\in E'$ and $(x,y)\in\mathbb{H}^{n+1}\cap \mathbb{S}^{n}$, and likewise for (\ref{TqeqA2}). So we obtain a version of Theorem \ref{Tqth2} in terms of the conditions (\ref{wnwDFSeq2}) and (\ref{wnwDFSeq3}). For example, this case applies for Silva spaces \cite{silvas}, i.e., when the injections $E_{n}\to E_{n+1}$ are compact. Silva spaces are of course the (DFS) spaces (strong duals of Fr\'{e}chet-Schwartz spaces).

 
Let us discuss an example in order to illustrate the ideas of this subsection.

\begin{example} [Fixation of variables]\label{wnwex9.1} Let $f\in\mathcal{S}'(\mathbb{R}^{n}_{t}\times\mathbb{R}^{m}_{\xi})$ and 
$t_{0}\in\mathbb{R}^{n}$. Following \L ojasiewicz \cite{lojasiewicz2}, we say that the variable $t=t_{0}\in\mathbb{R}^{n}$ can be fixed
 in $f(t,\xi)$ if there exists $g\in\mathcal{S}'(\mathbb{R}^{m}_{\xi})$ such that for each $\eta\in\mathcal{S}(\mathbb{R}^{n}_{t}\times\mathbb{R}^{m}_{\xi})$
$$
\lim_{\lambda\to 0^{+}}\left\langle f(t_{0}+\lambda t,\xi),\eta(t,\xi)\right\rangle=\int_{\mathbb{R}^{n}}\left\langle g(\xi),\eta(t,\xi)\right\rangle\mathrm{d}t.
$$
We write $f(t_{0},\xi)=g(\xi)$, {distributionally}. The nuclearity of the Schwartz spaces
implies that $\mathcal{S}'(\mathbb{R}^{n}_{t}\times\mathbb{R}^{m}_{\xi})$ is isomorphic to $\mathcal{S'}(\mathbb{R}_{t}^{n},E)$, 
where $E=\mathcal{S}'(\mathbb{R}^{m}_{\xi})$, a (DFS) space. Actually, the latter tells us that fixation of variables is nothing but the 
notion of \L ojasiewicz point values itself for $E$-valued distributions (cf. \cite{lojasiewicz,estrada-vindasI,vindas-estrada1} 
for \L ojasiewicz point values). Therefore, the (DSF) space-valued version of Corollary \ref{Tqc2} implies that if 
$\varphi\in\mathcal{S}(\mathbb{R}^{n}_{t})$ with $\int_{\mathbb{R}^{n}}\varphi(t)\mathrm{d}t\neq 0$, then the variable $t=t_{0}$ can be fixed 
in $f(t,\xi)$ if and only if there exists $k$ such that for each $\rho\in\mathcal{S}'(\mathbb{R}^{m}_{\xi})$
$$
\limsup_{\lambda\rightarrow0^+}\underset{(x,y)\in\mathbb{H}^{n+1}}{\sup_{\left|x\right|^2+y^2=1}}
y^k \left|\left\langle f\left(t_0+\lambda
x+\lambda yt,\xi\right),\varphi(t)\rho(\xi)\right\rangle\right|<\infty,
$$
and $\lim_{\lambda\to 0^{+}}\left\langle  f\left(t_0+\lambda
x+\lambda yt,\xi\right),\varphi(t)\rho(\xi)\right\rangle \mbox{ exists for all }(x,y)\in\mathbb{H}^{n+1}\cap\mathbb{S}^{n}.$
\end{example}
\begin{remark}It is well known \cite{hormander1} that  the projection
$\pi: \mathbb{R}^{n}_{t}\times\mathbb{R}^{m}\rightarrow
\left\{t_0\right\}\times\mathbb{R}^{m}$, $\pi(t,\xi)=(t_0,\xi),$
defines the pull-back
$$\mathcal{S}'(\mathbb{R}^{n}_{t}\times\mathbb{R}^{m}_{\xi})\ni
f(t,\xi)\rightarrow f(t_{0},\xi):=\pi^*f(\xi)\in
\mathcal{S}'(\mathbb{R}^{m}_{\xi})$$ if the wave front set of $f$
 satisfies $WF(f)\cap\{(t_0,\xi,\eta,0):\:\xi\in\mathbb R^{m}, \eta\in\mathbb R^{n}\}=\emptyset.$ Thus the result given in Example \ref{wnwex9.1} is interesting because we give a necessary and sufficient condition for the existence of this
  pull-back.
\end{remark}

\section*{A. Appendix\\ Relation between quasiasymptotics in $\mathcal{S}'_{0}(\mathbb{R}^{n},E)$ and $\mathcal{S}'(\mathbb{R}^{n},E)$}

\label{ap}
The purpose of this Appendix is to state two propositions which establish the precise connection between quasiasymptotics in the spaces $\mathcal{S}'_{0}(\mathbb{R}^{n},E)$ and $\mathcal{S}'(\mathbb{R}^{n},E)$. Such a relation was crucial for the arguments given in Section \ref{proofs}. Propositions A.1 and A.2 below are multidimensional generalizations of the results from \cite[Sec. 4]{vindas-pilipovic-rakic} and their proofs are based on recent structural theorems from \cite{vindas4}. We assume again that $E$ is a Banach space.

\begin{propositionA1}
Let $\mathbf{f}\in\mathcal{S}'(\mathbb{R}^{n},E)$ have quasiasymptotics behavior of degree $\alpha$ at the origin (resp. at infinity) with respect to $L$ in $\mathcal{S}'_{0}(\mathbb{R}^{n},E)$, i.e., for each $\rho\in\mathcal{S}_{0}(\mathbb{R}^{n})$ the following limit exists
\begin{equation*}
\tag{A.1}\label{wnweqA1}
\lim_{\lambda\to0^{+}}\frac{1}{ \lambda^{\alpha}L(\lambda)}\left\langle \mathbf{f}(\lambda t),\rho(t)\right\rangle \ \ \ \mbox{in } E
\ \ \ \left(\mbox{resp. }\lim_{\lambda\to\infty}\right).
\end{equation*}
Then, there is $\mathbf{g}\in\mathcal{S}'(\mathbb{R}^{n},E)$ such that:
\begin{itemize}
\item [(i)] If $\alpha\notin\mathbb{N}$, $\mathbf{g}$ is homogeneous of degree $\alpha$ and there
exists an $E$-valued polynomial $\mathbf{P}$ such that
\begin{equation*}\tag{A.2}
\mathbf{f}\left(\lambda t\right)-\mathbf{P}(\lambda t)\sim\lambda^{\alpha}L(\lambda)\mathbf{g}(t) \ \ \ \mbox{in}\ \mathcal{S}'(\mathbb{R}^{n},E) \ \ \mbox{as }\lambda\to0^{+} \ \ \ (\mbox{resp. }\lambda\to\infty) .
\end{equation*}
\item [(ii)] If $\alpha=p\in\mathbb{N}$, $\mathbf{g}$ is associate
homogeneous of order 1 and degree $p$ (cf. \cite[p. 74]{estrada-kanwal2}, \cite{shelkovich}) satisfying
\begin{equation*}
\tag{A.3}
\mathbf{g}(at)= a^{p}\mathbf{g}(t)+a^{p}\log a \sum _{\left|m\right|=p}t^{m}\mathbf{v}_{m} , \ \ \ \mbox{for each }a>0,
\end{equation*}
for some vectors $\mathbf{v}_{m}\in E$, $\left|m\right|=p$, and
there exist an $E$-valued polynomial $\mathbf{P}$ and associate asymptotically homogeneous $E$-valued functions $\mathbf{c}_{m}$, $\left|m\right|=p$, of degree 0 with respect to $L$ such that for each $a>0$
\begin{equation*}
\tag{A.4}
\mathbf{c}_{m}(a \lambda)=\mathbf{c}(\lambda)+ L(\lambda)\log a \:\mathbf{v}_{m}+o(L(\lambda)) \ \ \ \mbox{as }\lambda\to0^{+}  \ \ \ \left(\mbox{resp. }\lambda\to\infty\right)
\end{equation*}
and $\mathbf{f}$ has the following quasiasymptotic expansion
\begin{equation*}
\tag{A.5}
\mathbf{f}\left(\lambda t\right)=\mathbf{P}(\lambda t)+\lambda^{p}L(\lambda)\mathbf{g}(t)+\lambda^{p}\sum_{\left|m\right|=p}t^{m}\mathbf{c}_{m}(\lambda)+o\left(\lambda^{p}L(\lambda)\right) \ \ \ \mbox{in } \mathcal{S}'(\mathbb{R}^{n},E)
\end{equation*}

as $\lambda\to0^{+}$ (resp. $\lambda\to\infty$).
\end{itemize}
\end{propositionA1}
\begin{proof}
Let $\mathcal{S}^0(\mathbb{R}^{n})$ be the image under Fourier transform of $\mathcal{S}_{0}(\mathbb{R}^{n})$. Then, $\mathcal{S}^0(\mathbb{R}^{n})$ is precisely the closed subspace of $\mathcal{S}(\mathbb{R}^{n})$ consisting of test functions which vanish at the origin together with their partial derivatives of any order. Thus, if we Fourier transform (\ref{wnweqA1}) and employ the Banach-Steinhaus theorem, we obtain the existence of $\mathbf{h}_{0}\in {\mathcal{S}^0}'(\mathbb{R}^{n},E)$ such that the restriction of $\mathbf{f}$ to $\mathcal{S}^0(\mathbb{R}^{n})$ satisfies
$$
\hat{\mathbf{f}}(\lambda^{-1}u)\sim \lambda^{n+\alpha}L(\lambda)\mathbf{h}_{0}(u) \ \ \ \text{in} \ {\mathcal{S}^{0}}'(\mathbb{R}^{n},E) 
$$
as $\lambda\to0^{+}$ (resp. $\lambda\to\infty$). The result then follows from \cite[Thm. 3.1]{vindas4}, after taking Fourier inverse transform.
\end{proof}

The proof of the following proposition is completely analogous to that of Proposition A.1, but now making use of \cite[Thm. 3.2]{vindas4} instead of \cite[Thm. 3.1]{vindas4}; we therefore omit it.
\begin{propositionA2}
Let $\mathbf{f}\in\mathcal{S}'(\mathbb{R}^{n},E)$ be quasiasymptotically bounded of degree $\alpha$ at the origin (resp. at infinity) with respect to $L$ in $\mathcal{S}'_{0}(\mathbb{R}^{n},E)$.
Then:
\begin{itemize}
\item [(i)] If $\alpha\notin\mathbb{N}$, there exists an $E$-valued polynomial $\mathbf{P}$ such that
$\mathbf{f}-\mathbf{P}$ is quasiasymptotically bounded of degree $\alpha$ at the origin (resp. at infinity) with respect to $L$ in the space $\mathcal{S}'(\mathbb{R}^{n},E)$.
\item [(ii)] If $\alpha=p\in\mathbb{N}$, there exist an $E$-valued polynomial $\mathbf{P}$ and asymptotically homogeneously bounded $E$-valued functions $\mathbf{c}_{m}$, $\left|m\right|=p$, of degree 0 with respect to $L$ such that $\mathbf{f}$ has the following quasiasymptotic expansion
\begin{equation*}
\tag{A.7}
\mathbf{f}\left(\lambda t\right)=\mathbf{P}(\lambda t)+\lambda^{p}\sum_{\left|m\right|=p}t^{m}\mathbf{c}_{m}(\lambda)+O\left(\lambda^{p}L(\lambda)\right) \ \ \ \mbox{in } \mathcal{S}'(\mathbb{R}^{n},E)
\end{equation*}

as $\lambda\to0^{+}$ (resp. $\lambda\to\infty$).
\end{itemize}
\end{propositionA2}

\bibliographystyle{amsplain}

\end{document}